\setlist[enumerate,1]{label=(\roman*),ref=(\roman*),font=\normalfont}
\newcounter{todocounter}
\DeclareDocumentCommand\addreference{g}{\stepcounter{todocounter}\todo[color = blue!30]{\thetodocounter. Add reference\IfNoValueF{#1}{: #1}}\xspace}
\DeclareDocumentCommand\checkthis{g}{\stepcounter{todocounter}\todo[color = red!50]{\thetodocounter. Check this\IfNoValueF{#1}{: #1}}\xspace}
\DeclareDocumentCommand\fixthis{g}{\stepcounter{todocounter}\todo[color = orange!50]{\thetodocounter. Fix this\IfNoValueF{#1}{: #1}}\xspace}
\DeclareDocumentCommand\expand{g}{\stepcounter{todocounter}\todo[color = green!50]{\thetodocounter. Expand\IfNoValueF{#1}{: #1}}\xspace}
\declaretheoremstyle[
  spaceabove = 3pt,
  spacebelow = 3pt,
  bodyfont = \itshape,
]{first}
\declaretheoremstyle[
  spaceabove = 3pt,
  spacebelow = 3pt,
]{second}
\declaretheorem[numberwithin=section, style=first]{theorem}
\declaretheorem[sibling=theorem, style=first]{corollary}
\declaretheorem[sibling=theorem, style=first]{lemma}
\declaretheorem[sibling=theorem, style=first]{proposition}
\declaretheorem[sibling=theorem, style=second]{example}
\declaretheorem[sibling=theorem, style=second]{remark}
\declaretheorem[sibling=theorem, style=second]{definition}
\declaretheorem[sibling=theorem, style=second]{assumption}
\crefname{assumption}{Assumption}{Assumptions}
\declaretheorem[numberwithin=section, style=first, title=Theorem]{alphatheorem}
\declaretheorem[sibling=alphatheorem, style=first, title=Conjecture]{alphaconjecture}
\crefname{alphatheorem}{Theorem}{Theorems}
\crefname{alphaconjecture}{Conjecture}{Conjectures}
\crefname{alphacorollary}{Corollary}{Corollaries}
\crefname{alphaproposition}{Proposition}{Propositions}
\mathchardef\mhyphen="2D
\newcommand\dash{\nobreakdash-\hspace{0pt}}
\DeclareMathOperator\Aut{Aut}
\DeclareMathOperator\codim{codim}
\DeclareMathOperator\dimvect{\underline{dim}}
\DeclareMathOperator\derived{\mathbf{D}}
\DeclareMathOperator\dual{D}
\DeclareMathOperator\End{End}
\DeclareMathOperator\Ext{Ext}
\DeclareMathOperator\HH{H}
\DeclareMathOperator\hochschild{HH}
\DeclareMathOperator\Hom{Hom}
\DeclareMathOperator\RHom{\mathbf{R}Hom}
\DeclareMathOperator\Lie{Lie}
\DeclareMathOperator\Out{Out}
\DeclareMathOperator\Pic{Pic}
\DeclareMathOperator\rk{rk}
\DeclareMathOperator\RsheafHom{\mathbf{R}\mathcal{H}om}
\DeclareMathOperator\sheafHom{\mathcal{H}om}
\DeclareMathOperator\sheafExt{\mathcal{E}xt}
\DeclareMathOperator\source{s}
\DeclareMathOperator\target{t}
\newcommand\bounded{\ensuremath{\mathrm{b}}}
\newcommand\Chi{\ensuremath{\mathrm{X}}}
\newcommand\can{\ensuremath{\mathrm{can}}}
\newcommand\field{\ensuremath{\mathbf{k}}}
\newcommand\GL{\ensuremath{\mathrm{GL}}}
\newcommand\Gm{\ensuremath{\mathbb{G}_{\mathrm{m}}}}
\newcommand\op{\ensuremath{\mathrm{op}}}
\newcommand\tangent{\ensuremath{\mathrm{T}}}
\DeclareDocumentCommand\modulistack{om}{\IfNoValueTF{#1}{\mathcal{M}{(#2)}}{\mathcal{M}^{#1}(#2)}}
\DeclareDocumentCommand\modulispace{om}{\IfNoValueTF{#1}{\mathrm{M}{(#2)}}{\mathrm{M}^{#1}(#2)}}
\DeclareMathOperator\moduli{M}
\DeclareMathOperator\Rep{R}
\DeclareDocumentCommand\representationvariety{om}{\IfNoValueTF{#1}{\mathrm{R}({#2})}{\mathrm{R}^{#1}{(#2)}}}
\newcommand\group[1]{\mathrm{G}_{#1}}
\newcommand\semistable[1]{#1\mhyphen\mathrm{sst}}
\newcommand\stable[1]{#1\mhyphen\mathrm{st}}
\title{Vector fields and admissible embeddings \\ for quiver moduli}
\author{Pieter Belmans \and Ana-Maria Brecan \and Hans Franzen \and Markus Reineke}
\begin{document}
\maketitle


\begin{abstract}
  We introduce a double framing construction for moduli spaces of quiver representations.
  It allows us to reduce certain sheaf cohomology computations involving the universal representation,
  to computations involving line bundles,
  making them amenable to methods from geometric invariant theory.
  We will use this to show that
  in many good situations
  the vector fields on the moduli space
  are isomorphic as a vector space
  to the first Hochschild cohomology of the path algebra.
  We also show that considering the universal representation
  as a Fourier--Mukai kernel in the appropriate sense
  gives an admissible embedding of derived categories.
\end{abstract}

\tableofcontents

\section{Introduction}
The universal object on a fine moduli space
allows us to probe the geometry of the moduli space.
We will apply this principle to moduli spaces of quiver representations,
in order to describe their (infinitesimal) symmetries,
and to show that the universal object provides an admissible embedding
of the derived category of the path algebra
into the derived category of the moduli space.

Let~$Q$ be an acyclic quiver,
with~$\mathbf{d}$ an indivisible dimension vector
and~$\theta$ a stability parameter,
such that~$\mathbf{d}$ is~$\theta$-amply stable
(i.e., the unstable locus is of codimension at least~2)
and stability agrees with semistability.
These standing assumptions are codified in \cref{assumption:standing-assumption}.
Then the moduli space~$X\colonequals\modulispace[\stable\theta]{Q,\mathbf{d}}$
of~$\theta$-(semi)stable representations of dimension vector~$\mathbf{d}$
is a smooth projective variety,
which comes equipped with a universal object~$\mathcal{U}=((\mathcal{U}_i)_{i \in Q_0},(\mathcal{U}_a)_{a \in Q_1})$,
which is a representation of~$Q$ with values in vector bundles on~$X$.
We can interpret $\mathcal{U}$ as a left $\mathcal{O}_XQ$-module
by identifying it with $\bigoplus_{i \in Q_0} \mathcal{U}_i$
equipped with the structure of a left $\field Q$-module given by the morphisms $\mathcal{U}_a$.

We denote~$e_i$ the idempotent associated to~$i\in Q_0$,
so that~$e_j\field Qe_i$ is the vector space spanned by paths from~$i$ to~$j$.
We define the morphism
\begin{equation}
  \label{equation:local-morphism}
  H_{i,j}^{\mathcal{U}}\colon e_j\field Qe_i\to\sheafHom(\mathcal{U}_i,\mathcal{U}_j)=\mathcal{U}_i^\vee\otimes\mathcal{U}_j:p\mapsto\mathcal{U}_p
\end{equation}
where~$p$ is an oriented path~$a_\ell\cdots a_1$ from~$i$ to~$j$,
and~$\mathcal{U}_p$ is the composition~$\mathcal{U}_{a_\ell}\circ\cdots\circ\mathcal{U}_{a_1}\colon\mathcal{U}_i\to\mathcal{U}_j$.
Taking global sections we obtain the morphism
\begin{equation}
  \label{equation:global-sections}
  h_{i,j}^{\mathcal{U}}\colon e_j\field Qe_i\to\Hom(\mathcal{U}_i,\mathcal{U}_j)\cong\HH^0(X,\mathcal{U}_i^\vee\otimes\mathcal{U}_j).
\end{equation}
The main (technical) result of this paper is the following.
\begin{alphatheorem}
  \label{theorem:global-sections}
  Let~$Q$, $\mathbf{d}$ and~$\theta$ be as in \cref{assumption:standing-assumption},
  and consider the universal representation $\mathcal{U}$
  on~$X=\modulispace[\stable\theta]{Q,\mathbf{d}}$.
  Then
  \begin{enumerate}
    \item for all~$i,j\in Q_0$ the linear map~$h_{i,j}^{\mathcal{U}}$ from \eqref{equation:global-sections}
      is an isomorphism;
    \item the direct sum over~$i,j\in Q_0$ of the isomorphisms~$h_{i,j}^\mathcal{U}$
      induces an isomorphism of algebras
      \begin{equation}
        \label{equation:algebra-isomorphism}
        h^\mathcal{U}\colon\field Q\overset{\simeq}{\to}\End_X(\mathcal{U}).
      \end{equation}
  \end{enumerate}
\end{alphatheorem}

We will use \cref{theorem:global-sections} to
describe vector fields on quiver moduli,
and prove that the universal representation gives an admissible embedding of derived categories.

To obtain these applications,
we will build upon the main result of \cite{rigidity},
which requires a slightly stronger condition than just~$\theta$-ample stability,
called~$\theta$-strong ample stability,
which will be defined in \cref{definition:strongly-amply-stable}.

\paragraph{Vector fields}
The first application of \cref{theorem:global-sections}
is a recipe to compute vector fields on~$X=\modulispace[\stable\theta]{Q,\mathbf{d}}$,
i.e., a description of~$\HH^0(X,\tangent_X)$,
as a measure of the symmetry group of~$X$.
\begin{alphatheorem}
  \label{theorem:vector-fields}
  Let~$Q$, $\mathbf{d}$ and~$\theta$ be as in \cref{assumption:standing-assumption},
  and assume in addition that~$\mathbf{d}$ is~$\theta$-strongly amply stable.
  Consider~$X=\modulispace[\stable\theta]{Q,\mathbf{d}}$.
  There exists the exact sequence
  \begin{equation}
    \label{equation:vector-fields-presentation}
    0 \to \field  \xrightarrow{\phi} \bigoplus_{i \in Q_0} e_i\field Qe_i \xrightarrow{\psi} \bigoplus_{a \in Q_1} e_{\target(a)}\field Qe_{\source(a)} \to \HH^0(X,\tangent_X) \to 0,
  \end{equation}
  where the maps $\phi$ and $\psi$ are defined as
  \begin{align}
    \phi(z) &= z\sum_{i \in Q_0}e_i \label{equation:phi} \\
    \psi\left( \sum_{i \in Q_0} z_ie_i \right) &= \sum_{a \in Q_1} (z_{\target(a)}-z_{\source(a)})a, \label{equation:psi}
  \end{align}
  for~$z,z_i\in\field$.
\end{alphatheorem}
After the statement of \cref{theorem:happel}
we explain how the sequence \eqref{equation:vector-fields-presentation}
is similar to a presentation of the first Hochschild cohomology of the path algebra~$\field Q$,
which is also a measure of a symmetry group,
leading to an isomorphism of vector spaces
\begin{equation}
  \HH^0(X,\tangent_X)\cong\hochschild^1(\field Q).
\end{equation}
This isomorphism has a precursor in
the relationship between (infinitesimal) symmetries of a variety
and (infinitesimal) symmetries of a moduli space of sheaves on the variety.
The first example is given by
the moduli space~$\moduli_C(r,\mathcal{L})$ of stable vector bundles
of rank~$r\geq 2$ and determinant~$\mathcal{L}$,
on the smooth projective curve~$C$ of genus~$g\geq 2$,
such that~$\gcd(r,\deg\mathcal{L})=1$,
for which there exists an isomorphism
\begin{equation}
  \label{equation:vector-fields-vbac}
  \HH^0(\moduli_C(r,\mathcal{L}),\tangent_{\moduli_C(r,\mathcal{L})})\cong\HH^0(C,\tangent_C),
\end{equation}
both sides being zero by \cite[Theorem~1(a)]{MR0384797}.
The second example is given by
the Hilbert scheme~$\operatorname{Hilb}^nS$ of~$n$ points on a smooth projective surface~$S$,
for which there exists an isomorphism
\begin{equation}
  \label{equation:vector-fields-hilbert-scheme}
  \HH^0(\operatorname{Hilb}^nS,\tangent_{\operatorname{Hilb}^nS})\cong\HH^0(S,\tangent_S)
\end{equation}
by \cite[Corollaire~1]{MR2932167}.

In \eqref{equation:vector-fields-hilbert-scheme},
the isomorphism is in fact induced from an inclusion of algebraic groups~$\Aut(S)\hookrightarrow\Aut(\operatorname{Hilb}^nS)$
(see more on this below the next statement),
which after taking Lie algebras means that \eqref{equation:vector-fields-hilbert-scheme}
is an isomorphism of Lie algebras.
This brings us to the following conjecture.

\begin{alphaconjecture}
  \label{conjecture:lie-algebra}
  In the setting of \cref{theorem:vector-fields}
  there exists a naturally induced isomorphism of Lie algebras
  \begin{equation}
    \label{equation:lie-algebra-isomorphism}
    \HH^0(X,\tangent_X)\cong\hochschild^1(\field Q),
  \end{equation}
  where the Lie algebra structure on the left (resp.~right)
  is given by the Schouten--Nijenhuis bracket of vector fields
  (resp.~the Gerstenhaber bracket).
\end{alphaconjecture}
In \cref{example:3-vertex} we give an example
where \eqref{equation:lie-algebra-isomorphism} is manifestly not an isomorphism of Lie algebras
without the ample stability condition,
following the failure of \eqref{equation:global-sections} being an isomorphism in \cref{theorem:global-sections}.

In fact, more precise results on the automorphism groups of these two families of moduli spaces are now available.
For~$\moduli_C(r,\mathcal{L})$ the automorphism group is described
in terms of automorphisms of~$C$ and~$r$-torsion in the Jacobian of~$C$ \cite{MR1336336}.
For~$\operatorname{Hilb}^nS$ the description of the automorphism group depends on the geometry of~$S$.
If~$S$ has a big and nef (anti)canonical bundle then~$\Aut(S)\cong\Aut(\operatorname{Hilb}^nS)$ \cite[Theorem~1]{MR4155174},
see also \cite[Theorem~1.3]{MR4117578} for a similar result for rational surfaces of Iitaka dimension at least one.
If however~$S$ is a K3 surface, a rich theory of \emph{non-natural} automorphisms exists, starting with \cite[\S6]{MR0728605}.

The extent to which \eqref{equation:lie-algebra-isomorphism} also holds
\emph{without} taking Lie algebras (and thus on the level of algebraic groups, where discrete contributions are possible),
and thus to which extent the analogue of \cite{MR4155174} holds,
is not clear.

\paragraph{Admissible embeddings}
The second application of \cref{theorem:global-sections}
should be seen in the context of Schofield's conjecture,
as stated on \cite[page~80]{MR1428456},
which says that~$\mathcal{U}$ is a partial tilting object.
It is settled in \cite{rigidity} for a large class of quiver moduli.
\Cref{theorem:global-sections} gives further information about this partial tilting object,
namely about the algebra structure on the (derived) endomorphisms.

We will rephrase this result using the Fourier--Mukai(-like) functor
\begin{equation}
  \label{equation:fourier-mukai}
  \Phi_{\mathcal{U}}\colon\derived^\bounded(\field Q)\to\derived^\bounded(\modulispace[\stable\theta]{Q,\mathbf{d}}),
\end{equation}
where we continue to assume \cref{assumption:standing-assumption},
so that there exists a universal representation~$\mathcal{U}$.
The functor \eqref{equation:fourier-mukai} is defined on objects as
\begin{equation}
  \Phi_{\mathcal{U}}(V)=\RsheafHom_{\mathcal{O}_XQ}(\mathcal{U},V\otimes_\field\mathcal{O}_X),
\end{equation}
where~$\sheafHom_{\mathcal{O}_XQ}(-,-)$ is the sheafy Hom for coherent left~$\mathcal{O}_XQ$-modules,
which has a natural coherent~$\mathcal{O}_X$-module structure in our setup,
see, e.g., \cite[\S3.1]{2307.01711v2}.
\begin{alphatheorem}
  \label{theorem:admissible}
  Let~$Q$, $\mathbf{d}$ and~$\theta$ be as in \cref{assumption:standing-assumption},
  and assume in addition that~$\mathbf{d}$ is~$\theta$-strongly amply stable.
  Consider~$X=\modulispace[\stable\theta]{Q,\mathbf{d}}$.
  The functor \eqref{equation:fourier-mukai}
  is fully faithful.
\end{alphatheorem}
There are in fact 4 natural Fourier--Mukai-like functors to be considered and compared.
We will discuss this in \cref{section:admissible},
and show that all~4~are fully faithful,
because they are all related to each other.

This result was only known in the thin (and thus toric) case
for the canonical stability condition,
by Altmann--Hille \cite[Theorem~1.3]{MR1688469},
and for quiver flag varieties,
by Craw--Ito--Karmazyn \cite[Example~2.9]{MR3803802}.
We state the precise conditions for the toric case in \cref{theorem:altmann-hille}
to illustrate how our methods and result generalize this setting.

As with \cref{theorem:vector-fields},
this result has parallel results in the context of moduli spaces of sheaves.
The first example is given by
moduli spaces of vector bundles on a curve,
for which the fully faithfulness of
\begin{equation}
  \Phi_{\mathcal{E}}\colon\derived^\bounded(C)\to\derived^\bounded(\moduli_C(r,\mathcal{L}))
\end{equation}
for the universal vector bundle~$\mathcal{E}$
on~$C\times\moduli_C(r,\mathcal{L})$
is in various levels of generality
obtained in \cite{MR3954042,MR3713871,MR3764066,MR4651618}.
The second example is given by
Hilbert schemes of points on surfaces,
for which the fully faithfulness of
\begin{equation}
  \label{equation:universal-ideal-sheaf-fully-faithful}
  \Phi_{\mathcal{I}}\colon\derived^\bounded(S)\to\derived^\bounded(\operatorname{Hilb}^nS)
\end{equation}
for the universal ideal sheaf~$\mathcal{I}$
on~$S\times\operatorname{Hilb}^nS$
if and only if~$\mathcal{O}_S$ is an exceptional object
is established in \cite{MR3397451,1909.04321v2}.

A link between admissible embeddings using universal objects
and vector fields (and deformation theory)
is explained for Hilbert schemes of points in \cite{MR3950704}.
In \cref{proposition:spectral-sequence} we will explain how the same method works for quiver moduli,
and thus how \cref{theorem:vector-fields}
and the rigidity result of \cite{rigidity} (recalled in \cref{corollary:rigidity})
can be obtained from the statement (and not the ingredients of the proof) of \cref{theorem:admissible}.

\paragraph{Acknowledgements}
P.B.~was partially supported by the Luxembourg National Research Fund (FNR--17113194). \\
H.F.~was partially supported by the Deutsche Forschungsgemeinschaft (DFG, German Research Foundation) -- SFB-TRR 358/1 2023 -- 491392403. \\
M.R.~was supported by the Deutsche Forschungsgemeinschaft (DFG, German Research Foundation) CRC-TRR~191 ``Symplectic structures in geometry, algebra and dynamics'' (281071066)

We want to thank Alastair Craw for interesting discussions regarding the case of quiver flag varieties.

\section{Quiver moduli}
\label{section:preliminaries}

\paragraph{Construction of the moduli space}
We first recall the GIT construction of the moduli space of stable quiver representations,
to set up the notation,
as introduced by King in \cite{MR1315461}.
For more background on this, one is referred to \cite{MR2484736},
and for a stacky construction one is referred to \cite{2210.00033v1}.

Let~$Q=(Q_0,Q_1)$ be a quiver,
where we denote~$\source(a)$ (resp.~$\target(a)$)
for the source (resp.~target) of~$a\in Q_1$.
We will denote the path algebra by~$\field Q$,
and throughout we will work with left~$\field Q$-modules.

Fixing a dimension vector~$\mathbf{d}\in\mathbb{N}^{Q_0}$
we have the affine space
\begin{equation}
  \Rep(Q,\mathbf{d})\colonequals\prod_{a\in Q_1}\mathbb{A}^{d_{\source(a)}d_{\target(a)}}
\end{equation}
as the parameter space for representations of~$Q$ of dimension vector~$\mathbf{d}$.
It comes with an action of the group
\begin{equation}
  \group{\mathbf{d}}\colonequals\prod_{i\in Q_0}\GL_{d_i}
\end{equation}
acting by conjugation in the usual way.
Its orbits are the isomorphism classes,
but to get a well-behaved moduli space we need to introduce one more ingredient.

Let~$\theta\in\Hom(\mathbb{Z}^{Q_0},\mathbb{Z})$ such that~$\theta(\mathbf{d})=0$,
which we call a \emph{stability parameter}.
Then we say that a representation~$M$
corresponding to a point~$M\in\Rep(Q,\mathbf{d})$ is~\emph{$\theta$-semistable} if~$\theta(\dimvect N)\leq 0$
for all non-zero and proper subrepresentations~$N$ of~$M$,
and we say it is~\emph{$\theta$-stable} if the inequality is strict.
We will tacitly identify~$\Hom(\mathbb{Z}^{Q_0},\mathbb{Z})$ and~$\mathbb{Z}^{Q_0}$ in what follows.

This gives us the~$\group{\mathbf{d}}$-stable open subsets
\begin{equation}
  \Rep^{\stable\theta}(Q,\mathbf{d})
  \subseteq
  \Rep^{\semistable\theta}(Q,\mathbf{d})
  \subseteq
  \Rep(Q,\mathbf{d}),
\end{equation}
which after the GIT quotient by~$\group{\mathbf{d}}$
with respect to the polarization given by~$\theta$
gives
\begin{equation}
  \modulispace[\stable\theta]{Q,\mathbf{d}}
  \subseteq
  \modulispace[\semistable\theta]{Q,\mathbf{d}}
  \to
  \modulispace{Q,\mathbf{d}}.
\end{equation}
We have that~$\modulispace[\stable\theta]{Q,\mathbf{d}}$
is a smooth variety,
the first morphism is an open immersion,
and the second is projective.
If~$Q$ is acyclic then~$\modulispace{Q,\mathbf{d}}\cong\operatorname{Spec}k$.

If $\mathbf{d}$ is indivisible then
we can obtain a universal representation~$\mathcal{U}=\mathcal{U}(\mathbf{a})$ on~$\modulispace[\stable\theta]{Q,\mathbf{d}}$,
which depends on the choice of an~$\mathbf{a}\in\Hom(\mathbb{Z}^{Q_0},\mathbb{Z})$
such that~$\mathbf{a}(\mathbf{d})=1$,
see, e.g., \cite[\S2.1]{2307.01711v2}.
We can decompose~$\mathcal{U}$ as~$\smash{\bigoplus_{i\in Q_0}\mathcal{U}_i}$,
such that at a point~$\smash{[M]\in\modulispace[\stable\theta]{Q,\mathbf{d}}}$
corresponding to an isomorphism class of~$\theta$-stable representations,
the fiber of~$\mathcal{U}_i$
is the vector space~$M_i$.

We can summarize the preceding setup as follows.
\begin{proposition}
  \label{proposition:nice}
  Let~$Q$ be an acyclic quiver,
  $\mathbf{d}$ a dimension vector,
  and~$\theta$ a stability parameter
  such that
  \begin{itemize}
    \item $\mathbf{d}$ is indivisible,
    \item every $\theta$-semistable representation of dimension vector $\mathbf{d}$ is $\theta$-stable.
  \end{itemize}
  Then~$\modulispace[\stable\theta]{Q,\mathbf{d}}$ is a smooth projective variety,
  which comes equipped with a universal bundle~$\mathcal{U}(\mathbf{a})$
  for every~$\mathbf{a}\in\Hom(\mathbb{Z}^{Q_0},\mathbb{Z})$ such that~$\mathbf{a}(\mathbf{d})=1$.
\end{proposition}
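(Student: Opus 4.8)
The plan is to extract each assertion from the GIT picture recalled above, treating smoothness, projectivity, and the universal bundle in turn. For smoothness I would first observe that $\Rep(Q,\mathbf{d})$ is an affine space, hence smooth, so that its open $\group{\mathbf{d}}$-stable subset $\Rep^{\stable\theta}(Q,\mathbf{d})$ is smooth as well. The crucial input is a Schur-type lemma: a $\theta$-stable representation $M$ is a simple object of the abelian category of $\theta$-semistable representations of slope zero, so its endomorphism algebra is reduced to the scalars $\field$. Consequently every point of $\Rep^{\stable\theta}(Q,\mathbf{d})$ has stabilizer exactly the diagonal copy of $\Gm$ inside $\group{\mathbf{d}}$, and the induced action of $\PG{\mathbf{d}}=\group{\mathbf{d}}/\Gm$ is free. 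Since GIT restricts to a geometric quotient on the stable locus, and a geometric quotient of a smooth variety by a free action of a smooth group is itself smooth, this yields the first half of the claim.

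For projectivity I would invoke the hypothesis that $\theta$-semistability coincides with $\theta$-stability, which makes the open immersion $\modulispace[\stable\theta]{Q,\mathbf{d}}\hookrightarrow\modulispace[\semistable\theta]{Q,\mathbf{d}}$ an isomorphism. As $Q$ is acyclic, the only $\group{\mathbf{d}}$-invariant regular functions on $\Rep(Q,\mathbf{d})$ are the constants, so $\modulispace{Q,\mathbf{d}}\cong\operatorname{Spec}\field$; the projective structure morphism $\modulispace[\semistable\theta]{Q,\mathbf{d}}\to\modulispace{Q,\mathbf{d}}$ then exhibits the moduli space as projective over a point.

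The construction of the universal bundle is the step I expect to demand the most care, and it is where the indivisibility hypothesis enters decisively. The tautological family on $\Rep^{\stable\theta}(Q,\mathbf{d})$, namely the trivial bundle whose fibre is the standard representation equipped with its natural $\group{\mathbf{d}}$-linearization, does not descend directly along the quotient map, because the central $\Gm$ acts on the fibres by scalars rather than trivially. To remedy this I would use that indivisibility of $\mathbf{d}$ is precisely the condition guaranteeing some $\mathbf{a}\in\Hom(\mathbb{Z}^{Q_0},\mathbb{Z})$ with $\mathbf{a}(\mathbf{d})=1$; twisting the tautological family by the character of $\group{\mathbf{d}}$ attached to $\mathbf{a}$ rescales the central $\Gm$-weight to zero, so that the linearization descends to the freely acting quotient $\PG{\mathbf{d}}$. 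Kempf's descent lemma then produces a vector bundle $\mathcal{U}(\mathbf{a})$ on $\modulispace[\stable\theta]{Q,\mathbf{d}}$ whose fibre over $[M]$ is canonically $M$; recording its summands $\mathcal{U}_i$ together with the induced morphisms $\mathcal{U}_a$ supplies the promised representation of $Q$ in vector bundles, exactly as carried out in the references cited above.
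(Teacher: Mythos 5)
Your argument is correct and coincides with what the paper intends: \cref{proposition:nice} is stated as a summary of King's GIT construction and is not given a separate proof, the ingredients being exactly the ones you supply (freeness of the $\PG{\mathbf{d}}$-action on the stable locus via Schur's lemma, projectivity over $\modulispace{Q,\mathbf{d}}\cong\operatorname{Spec}\field$ for acyclic $Q$, and descent of the character-twisted tautological family using a weight-one $\mathbf{a}$). No discrepancy to report.
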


\paragraph{Ample stability}
We need that our quiver moduli spaces are particularly nice,
beyond what is assumed in \cref{proposition:nice}.
\begin{definition}
  \label{definition:amply-stable}
  A dimension vector~$\mathbf{d}$ is \emph{$\theta$-amply stable} if
  \begin{equation}
    \label{equation:amply-stable}
    \codim_{\Rep(Q,\mathbf{d})}(\Rep(Q,\mathbf{d})\setminus\Rep^{\stable\theta}(Q,\mathbf{d}))\geq 2.
  \end{equation}
\end{definition}
This condition in particular ensures that~$\Pic\modulispace[\stable\theta]{Q,\mathbf{d}}\cong\mathbb{Z}^{\#Q_0-1}$.
In \cref{example:3-vertex} we give an example where
\cref{theorem:vector-fields} fails,
when ample stability does not hold,
thus explaining why we really need to assume \cref{definition:amply-stable}.

We also need the following slightly stronger condition
in order to apply \cite{rigidity}.
As explained in op.~cit.,
it is expected that this condition can be omitted from the results in op.~cit.
\begin{definition}
  \label{definition:strongly-amply-stable}
  A dimension vector~$\mathbf{d}$ is \emph{$\theta$-strongly amply stable} if
  for every subdimension vector~$\mathbf{e}\leq\mathbf{d}$
  for which~$\mu(\mathbf{e})\geq\mu(\mathbf{d}-\mathbf{e})$
  we have~$\langle\mathbf{e},\mathbf{d}-\mathbf{e}\rangle\leq -2$.
\end{definition}
By \cite[Proposition~5.1]{MR3683503} we have that strong ample stability implies ample stability.
In \cite[Example~4.6]{rigidity}
an example is given where the converse implication does not hold.
For the proof of \cref{theorem:vector-fields} we will need the stronger notion,
because we will appeal to the main result of \cite{rigidity},
but as in op.~cit., we expect the results hold for ample stability.

\paragraph{Standing assumptions}
To ensure all the good properties discussed above,
we introduce the following conditions.
\begin{assumption}
  \label{assumption:standing-assumption}
  For $Q$, $\mathbf{d}$, and $\theta$ we assume that
  \begin{enumerate}[label=\textnormal{(\roman*)}]
    \item \label{item:acyclic} $Q$ is acyclic,
    \item \label{item:indivisible}$\mathbf{d}$ is indivisible,
    \item \label{item:semistable=stable} every $\theta$-semistable representation of dimension vector $\mathbf{d}$ is $\theta$-stable,
    \item \label{item:amply-stable}$\mathbf{d}$ is $\theta$-amply stable.
  \end{enumerate}
\end{assumption}

\section{Double framing construction}
\label{section:double-framing}
In this section, we will introduce a construction to reduce the computation of
sheaf cohomology of universal bundles on a quiver moduli space
to sheaf cohomology of universal line bundles on another quiver moduli space.
We will do this via a double framing construction
which will realize a fiber product of projectivizations of universal bundles
as a quiver moduli space.

\paragraph{On fiber products of projective bundles}
Let us first collect some general facts on projective bundles.
Let~$X$ be a variety
and let~$E$ be a vector bundle of rank~$r+1$ on~$X$.
Let
\begin{equation}
  p\colon\mathbb{P}(E)=\mathbb{P}_X(E)=\operatorname{Proj}\operatorname{Sym}_{\mathcal{O}_X}^\bullet(E)\to X
\end{equation}
be the projectivization of~$E$;
here~$\operatorname{Proj}$ denotes the relative Proj over~$X$.
Its fiber in a point~$x \in X$ consists of one-dimensional quotients of~$E_x$.
The universal line bundle~$\mathcal{O}_{\mathbb{P}(E)}(1)$ is a quotient of~$p^*E$.

Let~$F$ be another vector bundle on $X$,
and consider the projectivization
\begin{equation}
  q\colon\mathbb{P}(F)\to X.
\end{equation}
Define~$Y$ as the fiber product
\begin{equation}
  \label{equation:cartesian-product}
  \begin{tikzcd}
    Y \arrow{r}{q'} \arrow[swap]{d}{p'} & \mathbb{P}(E) \arrow{d}{p} \\
    \mathbb{P}(F) \arrow[swap]{r}{q} & X.
  \end{tikzcd}
\end{equation}
Then
\begin{equation}
  Y \cong \mathbb{P}_{\mathbb{P}(E)}(p^*F) \cong \mathbb{P}_{\mathbb{P}(F)}(q^*E).
\end{equation}

Let $m,n \in \mathbb{Z}$.
We define
\begin{equation}
  \mathcal{O}_Y(m,n)\colonequals q^{\prime*}\mathcal{O}_{\mathbb{P}(E)}(m) \otimes p^{\prime*}\mathcal{O}_{\mathbb{P}(F)}(n).
\end{equation}
Applying the projection formula and flat base change,
together with \cite[\href{https://stacks.math.columbia.edu/tag/01XX}{Lemma 01XX}]{stacks-project}
we have that
\begin{equation}
  \label{equation:cohomology-Y-vs-X}
  \HH^i(Y,\mathcal{O}_Y(1,1))
  \cong
  \HH^i(X,E\otimes F).
\end{equation}

\paragraph{Double framing construction}
Let $Q$ be a quiver,
$\mathbf{d}$ a dimension vector,
and $\theta$ a stability parameter,
satisfying \cref{assumption:standing-assumption}.

Let $X\colonequals\modulispace[\stable{\theta}]{Q,\mathbf{d}}$ denote the moduli space
of~$\theta$-(semi)stable representations of dimension vector~$\mathbf{d}$,
and let $\mathcal{U}$ be the universal representation on~$X$,
which depends on the choice of a character~$\mathbf{a}$ of weight one as in \cref{section:preliminaries}.

Fix vertices $i,j\in Q_0$; we allow~$i = j$.
Consider the fiber product
\begin{equation}
  Y = \mathbb{P}_X(\mathcal{U}_i^\vee) \times_X \mathbb{P}_X(\mathcal{U}_j).
\end{equation}
Our goal is to describe~$Y$ as a quiver moduli space.
Let~$\overline{Q}$ be the doubly-framed quiver defined by
\begin{equation}
  \left\{
  \begin{aligned}
    \overline{Q}_0 &\colonequals Q_0 \sqcup \{0,\infty\} \\
    \overline{Q}_1 &\colonequals Q_1 \sqcup \{0 \to i, j \to \infty\}.
  \end{aligned}
  \right.
\end{equation}
where~$0$ and~$\infty$ are two new symbols.
This quiver depends on the choice of $i$ and $j$,
but we will suppress this in the notation.
The notation~$\overline{Q}$ is not to be confused with the doubled quiver,
as for instance in the definition of the preprojective algebra.

We define a doubly-framed dimension vector $\overline{\mathbf{d}}$ by
\begin{equation}
  \label{equation:double-framed-d}
  \overline{d}_k
  \colonequals
  \begin{cases}
    d_k & k\in Q_0 \\
    1 & k\in\{0,\infty\}.
  \end{cases}
\end{equation}
We will also denote this dimension vector as $\overline{\mathbf{d}} = (1,\mathbf{d},1)$.

A representation of $\overline{Q}$ of dimension vector $\overline{\mathbf{d}}$ is
a triple $(v,M,\phi)$ which consists of
\begin{itemize}
  \item a representation $M$ of~$Q$ with dimension vector $\mathbf{d}$,
  \item a vector $v \in M_i$,
  \item a linear form $\phi \in M_j^\vee$.
\end{itemize}
The representation variety $\representationvariety{\overline{Q},\overline{\mathbf{d}}}$ can therefore be written as
\begin{equation}
  \representationvariety{\overline{Q},\overline{\mathbf{d}}} = \mathbb{A}^{d_i} \times \representationvariety{Q,\mathbf{d}} \times \mathbb{A}^{d_j},
\end{equation}
where the factor~$\mathbb{A}^{d_i}$ encodes the choice of~$v\in M_i$,
and the factor~$\mathbb{A}^{d_j}$ encodes the choice of~$\phi\in M_j^\vee$.

Next, we fix a natural number $N$,
which we will choose sufficiently big later on.
We define a stability parameter $\overline{\theta}=\overline{\theta}_N$ as~$(1,N\theta,-1)$
using the identification~$\mathbb{Z}^{\overline{Q}_0}=\mathbb{Z}\oplus\mathbb{Z}^{Q_0}\oplus\mathbb{Z}$
where the first (resp.~third) summand
corresponds to~$k=0$ (resp.~$k=\infty$).
The following proposition describes the stable and semistable locus with respect to $\overline{\theta}$.

\begin{proposition}
  \label{proposition:semistability-double-framed}
  Let $(v,M,\phi)$ be a representation of $\overline{Q}$ of dimension vector $\overline{\mathbf{d}}$. The following are equivalent:
  \begin{enumerate}[label=\textnormal{(\roman*)}]
    \item \label{item:stable-double-framed} $(v,M,\phi)$ is $\overline{\theta}$-stable.
    \item \label{item:semistable-double-framed} $(v,M,\phi)$ is $\overline{\theta}$-semistable.
    \item \label{item:description-semistability} $M$ is $\theta$-(semi)stable, $v \neq 0$, and $\phi \neq 0$.
  \end{enumerate}
\end{proposition}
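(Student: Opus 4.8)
The plan is to first observe that $\overline{\theta}(\overline{\mathbf{d}}) = 1 + N\theta(\mathbf{d}) - 1 = 0$, so that $\overline{\theta}$-(semi)stability is well-posed, and then to prove the cycle (iii)$\Rightarrow$(i)$\Rightarrow$(ii)$\Rightarrow$(iii), the middle implication being immediate. Everything rests on a bookkeeping step for subrepresentations: any subrepresentation $\overline{N}$ of $(v,M,\phi)$ has dimension vector of the shape $(\epsilon_0, \mathbf{e}, \epsilon_\infty)$ with $\epsilon_0, \epsilon_\infty \in \{0,1\}$ and $N\colonequals\overline{N}|_{Q_0}$ a subrepresentation of $M$ with $\dimvect N = \mathbf{e} \leq \mathbf{d}$; conversely such data defines a subrepresentation if and only if the compatibilities along the two new arrows hold, namely $v \in N_i$ when $\epsilon_0 = 1$, and $N_j \subseteq \ker\phi$ when $\epsilon_\infty = 0$. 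I would record this alongside the weight
\[
  \overline{\theta}(\dimvect\overline{N}) = \epsilon_0 - \epsilon_\infty + N\theta(\mathbf{e}).
\]

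The key numerical point I would isolate next is that $\theta$ is integer-valued, so $\theta(\mathbf{e}) \in \mathbb{Z}$ and hence $\lvert\theta(\mathbf{e})\rvert \geq 1$ as soon as $\theta(\mathbf{e}) \neq 0$. Taking $N \geq 2$, the term $N\theta(\mathbf{e})$ then dominates the bounded correction $\epsilon_0 - \epsilon_\infty \in \{-1,0,1\}$ whenever $\theta(\mathbf{e}) \neq 0$, so that the sign of $\overline{\theta}(\dimvect\overline{N})$ is the sign of $\theta(\mathbf{e})$. The only delicate regime is thus $\theta(\mathbf{e}) = 0$, and it is exactly there that the framing data $v$ and $\phi$ intervene.

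For (ii)$\Rightarrow$(iii) I would argue by contraposition, producing an explicit destabilizing subrepresentation in each failure case. If $v = 0$ then $(\field, 0, 0)$ is a subrepresentation of weight $+1$; if $\phi = 0$ then $(\field, \mathbf{d}, 0)$, i.e.\ $M$ together with the framing vertex $0$, is a subrepresentation of weight $+1$; and if $M$ is not $\theta$-semistable, picking $N \leq M$ with $\theta(\dimvect N) \geq 1$, the subrepresentation $(0, \dimvect N, 1)$ — always admissible, since switching on $\infty$ imposes no constraint — has weight $N\theta(\dimvect N) - 1 \geq N - 1 > 0$. Each contradicts semistability, so (ii) forces $v \neq 0$, $\phi \neq 0$, and $M$ $\theta$-semistable, whereupon \cref{assumption:standing-assumption} upgrades $\theta$-semistable to $\theta$-stable.

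For (iii)$\Rightarrow$(i) I would run the complementary case analysis on an arbitrary proper nonzero $\overline{N}$, showing $\overline{\theta}(\dimvect\overline{N}) < 0$. When $\theta(\mathbf{e}) < 0$ the numerical observation gives $\overline{\theta} \leq 1 - N < 0$. The boundary $\theta(\mathbf{e}) = 0$ splits, by $\theta$-stability of $M$, into $\mathbf{e} = 0$ or $\mathbf{e} = \mathbf{d}$: for $\mathbf{e} = 0$ the hypothesis $v \neq 0$ forbids $\epsilon_0 = 1$, leaving only $(0,0,1)$ of weight $-1$; for $\mathbf{e} = \mathbf{d}$ the hypothesis $\phi \neq 0$ forbids $\epsilon_\infty = 0$, leaving only $(0,\mathbf{d},1)$ of weight $-1$; and $\theta(\mathbf{e}) > 0$ cannot occur for a subrepresentation of a $\theta$-stable $M$. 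I expect the main obstacle to be precisely this boundary analysis: it is where one must verify that the two framing conditions together eliminate every subrepresentation not controlled by the numerical estimate, and it is exactly where $v \neq 0$ and $\phi \neq 0$ are genuinely used — dually, where their failure manufactures the destabilizers in the converse direction.
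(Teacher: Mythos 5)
Your proof is correct and follows essentially the same route as the paper's: the paper merely packages your numerical observation (that for $N$ large the sign of $\overline{\theta}(\epsilon_0,\mathbf{e},\epsilon_\infty)$ is governed by the sign of $\theta(\mathbf{e})$ unless $\theta(\mathbf{e})=0$) into a separate lemma computing $B_+(\overline{\theta})$, $B_-(\overline{\theta})$, and $B_0(\overline{\theta})$. Your explicit destabilizing subrepresentations for (ii)$\Rightarrow$(iii) and your boundary analysis at $\theta(\mathbf{e})=0$ for (iii)$\Rightarrow$(i) coincide in substance with the paper's argument.
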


We first prove a lemma.
For a stability parameter $\theta$ for $Q$ such that $\theta(\mathbf{d}) = 0$ we define the following sets of dimension vectors
\begin{align}
  \label{equation:set-B-plus}
  B_+(\theta) &= \{\mathbf{e} \in \mathbb{N}_0^{Q_0} \mid 0 \leq \mathbf{e} \leq \mathbf{d} \text{ and } \theta(\mathbf{e}) > 0\} \\
  \label{equation:set-B-minus}
  B_-(\theta) &= \{\mathbf{e} \in \mathbb{N}_0^{Q_0} \mid 0 \leq \mathbf{e} \leq \mathbf{d} \text{ and } \theta(\mathbf{e}) < 0\} \\
  \label{equation:set-B-zero}
  B_0(\theta) &= \{\mathbf{e} \in \mathbb{N}_0^{Q_0} \mid 0 \leq \mathbf{e} \leq \mathbf{d} \text{ and } \theta(\mathbf{e}) = 0\}
\end{align}
They depend on $\mathbf{d}$ but as the dimension vector will be clear from the context,
we choose to neglect the dependency on $\mathbf{d}$ in the notation.
We determine the analogous sets for $\overline{\theta}$
(with respect to $\overline{\mathbf{d}}=(1,\mathbf{d},1)$ as in \eqref{equation:double-framed-d}).
\begin{lemma}
  \label{lemma:new-B}
  For~$N$ sufficiently large we have
  \begin{equation}
    \label{equation:new-B}
    \begin{aligned}
      B_+(\overline{\theta})
      &=\{ (1,\mathbf{e},0) \mid \mathbf{e} \in B_0(\theta)\} \cup \{ (0,\mathbf{e},0), (1,\mathbf{e},0), (0,\mathbf{e},1), (1,\mathbf{e},1) \mid \mathbf{e} \in B_+(\theta)\} \\
      B_-(\overline{\theta})
      &=\{ (0,\mathbf{e},1) \mid \mathbf{e} \in B_0(\theta)\} \cup \{ (0,\mathbf{e},0), (1,\mathbf{e},0), (0,\mathbf{e},1), (1,\mathbf{e},1) \mid \mathbf{e} \in B_-(\theta)\} \\
      B_0(\overline{\theta})
      &=\{ (0,\mathbf{e},0), (1,\mathbf{e},1) \mid \mathbf{e} \in B_0(\theta)\}.
    \end{aligned}
  \end{equation}
\end{lemma}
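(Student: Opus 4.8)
The plan is to reduce everything to a one-line evaluation of $\overline{\theta}$ on an arbitrary subdimension vector, followed by an elementary sign analysis. Write a subdimension vector of $\overline{\mathbf{d}}=(1,\mathbf{d},1)$ as $\overline{\mathbf{e}}=(\epsilon_0,\mathbf{e},\epsilon_\infty)$; since the dimensions at the framing vertices $0$ and $\infty$ are both $1$, the only constraints are $\epsilon_0,\epsilon_\infty\in\{0,1\}$ together with $0\leq\mathbf{e}\leq\mathbf{d}$. Under the identification $\mathbb{Z}^{\overline{Q}_0}=\mathbb{Z}\oplus\mathbb{Z}^{Q_0}\oplus\mathbb{Z}$ and with $\overline{\theta}=(1,N\theta,-1)$, one computes
\[
  \overline{\theta}(\overline{\mathbf{e}})=\epsilon_0-\epsilon_\infty+N\theta(\mathbf{e}),
\]
so the whole lemma comes down to reading off the sign of this expression in each case.

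The key observation is that the \emph{framing contribution} $\epsilon_0-\epsilon_\infty$ lies in $\{-1,0,1\}$, whereas the \emph{bulk contribution} $N\theta(\mathbf{e})$ is a multiple of $N$: it vanishes exactly when $\mathbf{e}\in B_0(\theta)$, and otherwise $\theta(\mathbf{e})$ is a nonzero integer, so $|N\theta(\mathbf{e})|\geq N$. I would therefore split into two regimes. If $\mathbf{e}\in B_+(\theta)$ or $\mathbf{e}\in B_-(\theta)$, then for any $N\geq 2$ the bulk term dominates the framing term in absolute value, so $\overline{\theta}(\overline{\mathbf{e}})$ has the same sign as $\theta(\mathbf{e})$ regardless of $\epsilon_0,\epsilon_\infty$; this produces all four vectors $(\epsilon_0,\mathbf{e},\epsilon_\infty)$ in $B_+(\overline{\theta})$, respectively $B_-(\overline{\theta})$, matching the second bracketed set on each line of \eqref{equation:new-B}. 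If instead $\mathbf{e}\in B_0(\theta)$, the expression collapses to $\overline{\theta}(\overline{\mathbf{e}})=\epsilon_0-\epsilon_\infty$, whose sign is dictated entirely by the framing: $(1,\mathbf{e},0)$ lands in $B_+(\overline{\theta})$, $(0,\mathbf{e},1)$ in $B_-(\overline{\theta})$, and both $(0,\mathbf{e},0)$ and $(1,\mathbf{e},1)$ in $B_0(\overline{\theta})$. Collecting the two regimes gives precisely the three claimed descriptions.

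There is no genuine obstacle here; the content is bookkeeping, and the partition of the subdimension vectors of $\overline{\mathbf{d}}$ according to $\mathbf{e}\in B_+(\theta)\sqcup B_-(\theta)\sqcup B_0(\theta)$ makes the exhaustiveness automatic. The only point deserving a word is the quantifier ``for $N$ sufficiently large'': because there are finitely many $\mathbf{e}$ with $0\leq\mathbf{e}\leq\mathbf{d}$ and $\theta$ is integer-valued, the nonzero values $\theta(\mathbf{e})$ are bounded away from $0$ by $1$, so a single threshold works uniformly across all subdimension vectors. In fact any $N\geq 2$ suffices, since then $N|\theta(\mathbf{e})|\geq 2>1\geq|\epsilon_0-\epsilon_\infty|$ whenever $\theta(\mathbf{e})\neq 0$; I would record this explicit bound but keep the statement phrased as ``sufficiently large'' to match how the lemma is later invoked.
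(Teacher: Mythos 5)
Your proposal is correct and follows essentially the same route as the paper: evaluate $\overline{\theta}(a,\mathbf{e},b)=a+N\theta(\mathbf{e})-b$ and do the sign analysis case by case according to whether $\mathbf{e}$ lies in $B_+(\theta)$, $B_-(\theta)$, or $B_0(\theta)$. The only difference is that you make the threshold $N\geq 2$ explicit, which the paper leaves implicit in ``$N\gg 0$''.
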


\begin{proof}
  Let $\mathbf{e}$ be a dimension vector such that $0 \leq \mathbf{e} \leq \mathbf{d}$.
  For any dimension vector of the form $(a,\mathbf{e},b)$ with~$a,b \in \{0,1\}$ we have
  \begin{equation}
    \overline{\theta}(a,\mathbf{e},b) = a + N\theta(\mathbf{e}) - b.
  \end{equation}
  The equalities in \eqref{equation:new-B} are then immediate,
  using that~$\overline{\theta}(a,\mathbf{e},b)>0$ for~$N\gg0$
  if~$\mathbf{e}\in B_+(\theta)$,
  respectively~$\overline{\theta}(a,\mathbf{e},b)<0$ for~$N\gg0$
  if~$\mathbf{e}\in B_-(\theta)$,
  and $\overline{\theta}(a,\mathbf{e},b) = a-b$ if $\mathbf{e} \in B_0(\theta)$.
\end{proof}

\begin{proof}[Proof of \cref{proposition:semistability-double-framed}]
  \cref{item:stable-double-framed} obviously implies \cref{item:semistable-double-framed}.

  Now we prove that \cref{item:semistable-double-framed} implies \cref{item:description-semistability}.
  Let $(v,M,\phi)$ be a $\overline{\theta}$-semistable representation.
  Let $\mathbf{e} \in B_+(\theta)$.
  As $(0,\mathbf{e},1)$ lies in $B_+(\overline{\theta})$,
  we see that $M$ cannot have a subrepresentation $M'$ of dimension vector $\mathbf{e}$,
  for otherwise $(0,M',\phi|_{M'_j})$ would be a subrepresentation of $(v,M,\phi)$ of dimension vector $(0,\mathbf{e},1)$
  contradicting semistability of $(v,M,\phi)$.
  Also, $v \neq 0$ because $(1,\mathbf{0},0) \in B_+(\overline{\theta})$
  and $\phi \neq 0$ because $(1,\mathbf{d},0) \in B_+(\overline{\theta})$.

  Finally, we show that \cref{item:description-semistability} implies \cref{item:stable-double-framed}.
  Let $(a,\mathbf{e},b) \in B_+(\overline{\theta}) \cup B_0(\overline{\theta})$.
  By the description of the sets $B_+(\overline{\theta})$ and $B_0(\overline{\theta})$ in \cref{lemma:new-B},
  we see that $\theta(\mathbf{e}) \geq 0$.
  Assume that $(v,M,\phi)$ had a subrepresentation of dimension vector $(a,\mathbf{e},b)$.
  In particular, $M$ would have a subrepresentation of dimension vector $\mathbf{e}$ which,
  by semistability,
  implies that $\theta(\mathbf{e}) \leq 0$.
  So $\theta(\mathbf{e}) = 0$ must hold.
  As $M$ is $\theta$-stable,
  this implies that $\mathbf{e} \in \{\mathbf{0},\mathbf{d}\}$.
  This shows that $(a,\mathbf{e},b)$ is one of the following six dimension vectors:
  \begin{equation}
    (0,\mathbf{0},0),\ (1,\mathbf{0},0),\ (1,\mathbf{0},1),\ (0,\mathbf{d},0),\ (1,\mathbf{d},0),\ (1,\mathbf{d},1)
  \end{equation}
  There can be no subrepresentations of $(v,M,\phi)$ of dimension vector $(1,\mathbf{0},0)$ or $(1,\mathbf{0},1)$
  because $v \neq 0$.
  There can also be no subrepresentations of dimension vectors $(0,\mathbf{d},0)$ or $(1,\mathbf{d},0)$,
  because $\phi\neq 0$.
  The remaining two possibilities are the zero dimension vector and $\overline{\mathbf{d}}$.
  We have established that $(v,M,\phi)$ is~$\overline{\theta}$-stable.
\end{proof}

\begin{remark}
  The double framing construction resembles the framing construction in \cite[Definition~3.1]{MR2511752}
  for the construction of smooth models.
  Given a triple $(Q,\mathbf{d},\theta)$ consisting of a quiver, a dimension vector, and a stability parameter,
  loc.~cit.~yields, when choosing~$\mathbf{n} = \mathbf{e}_i$
  another such triple $(\hat{Q},\hat{\mathbf{d}},\hat{\theta})$
  for which a $\hat{\theta}$-(semi)stable representation is a pair $(v,M)$
  such that $M$ is a $\theta$-semistable representation of~$Q$ and~$v \in M_i\setminus\{0\}$.

  Dualizing the construction of loc.~cit.~we obtain
  for a triple $(Q,\mathbf{d},\theta)$ as above
  a triple $(\tilde{Q},\tilde{\mathbf{d}},\tilde{\theta})$
  for which a $\tilde{\theta}$-(semi)stable representation is a pair $(M,\phi)$
  such that~$M$ is a $\theta$-semistable representation of~$Q$ and~$\phi\colon M_j \to \field$ is a non-zero linear form.

  Applying both constructions yields,
  independently of the order,
  the doubly-framed quiver $\overline{Q}$ and the dimension vector $\overline{\mathbf{d}}$.
  For the stability parameter though, the order matters.
  The two resulting stability parameters do not agree, i.e.,
  \begin{equation}
    \tilde{\hat{\theta}} \neq \hat{\tilde{\theta}}
  \end{equation}
  and they are both different from $\overline{\theta}$.
  One can a posteriori use \cref{proposition:semistability-double-framed}
  to conclude that they are all GIT-equivalent,
  meaning that the semistable loci with respect to these stability parameters agree.
\end{remark}

\begin{remark}
  Note also that the present double framing construction is essentially different from the one in \cite{MR4439387},
  used for modelling neural network architectures using quiver moduli.
  Namely, the double framing in op.~cit.~adds a single vertex to the quiver,
  as well as arrows to and from it, in contrast to the two different extension vertices used here
\end{remark}

\cref{proposition:semistability-double-framed} implies the following.
\begin{corollary}
  \label{corollary:doubly-framed-standing-assumption}
  The doubly-framed $\overline{Q}$, $\overline{\mathbf{d}}$, and $\overline{\theta}$
  satisfy \ref{item:acyclic}, \ref{item:indivisible}, and \ref{item:semistable=stable} of \cref{assumption:standing-assumption}.
\end{corollary}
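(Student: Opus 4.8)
The plan is to verify the three asserted conditions one at a time; the genuine content of the corollary, condition~\ref{item:semistable=stable}, will turn out to be nothing more than a restatement of \cref{proposition:semistability-double-framed}, so the essential work has already been done.

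First I would dispatch acyclicity, condition~\ref{item:acyclic}. By construction $\overline{Q}$ arises from $Q$ by adjoining the vertices $0$ and $\infty$ together with the arrows $0\to i$ and $j\to\infty$. Since $0$ receives no arrows and $\infty$ emits none, neither can appear in any oriented cycle, so every oriented cycle of $\overline{Q}$ would already be a cycle of $Q$. As $Q$ is acyclic by \cref{assumption:standing-assumption}, so is $\overline{Q}$. Condition~\ref{item:indivisible} is even shorter: the defining formula \eqref{equation:double-framed-d} gives $\overline{d}_0=\overline{d}_\infty=1$, whence $\gcd_{k\in\overline{Q}_0}\overline{d}_k=1$ and $\overline{\mathbf{d}}=(1,\mathbf{d},1)$ is indivisible.

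For condition~\ref{item:semistable=stable} I would simply invoke \cref{proposition:semistability-double-framed}: its equivalence of \cref{item:stable-double-framed} and \cref{item:semistable-double-framed} says precisely that a representation of dimension vector $\overline{\mathbf{d}}$ is $\overline{\theta}$-stable if and only if it is $\overline{\theta}$-semistable. Hence every $\overline{\theta}$-semistable representation of dimension vector $\overline{\mathbf{d}}$ is $\overline{\theta}$-stable, as required.

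There is therefore no real obstacle to overcome; the only point deserving care is that all three verifications are made for the stability parameter $\overline{\theta}=\overline{\theta}_N$ with $N$ taken sufficiently large, since this is the regime in which \cref{lemma:new-B}, and consequently \cref{proposition:semistability-double-framed}, were established. I would also note explicitly that ample stability, condition~\ref{item:amply-stable} of \cref{assumption:standing-assumption}, is deliberately omitted from the corollary: it requires a separate codimension estimate for the doubly-framed unstable locus and is not a formal consequence of the construction.
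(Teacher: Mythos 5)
Your proposal is correct and matches the paper, which gives no separate proof beyond the remark that \cref{proposition:semistability-double-framed} implies the corollary; you simply make explicit the routine checks of acyclicity and indivisibility that the paper leaves tacit. The one point worth keeping is your observation that everything is for $N$ sufficiently large, which is indeed the regime in which \cref{proposition:semistability-double-framed} holds.
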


Therefore $\modulispace[\stable{\overline{\theta}}]{\overline{Q},\overline{\mathbf{d}}}$ is smooth and projective,
and it possesses a universal representation $\overline{\mathcal{U}}=\overline{\mathcal{U}}(\overline{\mathbf{a}})$
dependent on the choice of an~$\overline{\mathbf{a}}$ such that~$\overline{\mathbf{a}}\cdot\overline{\mathbf{d}}=1$.
The summands~$\overline{\mathcal{U}}_0$ and~$\overline{\mathcal{U}}_\infty$ are line bundles.

The following lemma explains why we might still have to modify~$\overline{Q}$,
so that we can guarantee \ref{item:amply-stable} of \cref{assumption:standing-assumption}.

\begin{lemma}
  \label{lemma:ample-stability-generic-case}
  The dimension vector $\overline{\mathbf{d}}$ is amply stable for $\overline{\theta}$ if and only if $d_i > 1$ and $d_j > 1$.
\end{lemma}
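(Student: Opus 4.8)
The plan is to translate $\overline{\theta}$-ample stability of $\overline{\mathbf{d}}$ into an elementary codimension count on the product
\begin{equation}
  \Rep(\overline{Q},\overline{\mathbf{d}}) = \mathbb{A}^{d_i} \times \Rep(Q,\mathbf{d}) \times \mathbb{A}^{d_j},
\end{equation}
using the explicit description of the $\overline{\theta}$-stable locus from \cref{proposition:semistability-double-framed}. By that proposition (and since $\theta$-semistability coincides with $\theta$-stability under \cref{assumption:standing-assumption}), a triple $(v,M,\phi)$ is $\overline{\theta}$-stable if and only if $v\neq 0$, $M$ is $\theta$-stable, and $\phi\neq 0$. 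Under this identification the $\overline{\theta}$-stable locus is precisely $(\mathbb{A}^{d_i}\setminus\{0\})\times\Rep^{\stable\theta}(Q,\mathbf{d})\times(\mathbb{A}^{d_j}\setminus\{0\})$, so its complement, the \emph{non-stable locus}, is the union of the three closed subsets $\{v=0\}$, $\mathbb{A}^{d_i}\times(\Rep(Q,\mathbf{d})\setminus\Rep^{\stable\theta}(Q,\mathbf{d}))\times\mathbb{A}^{d_j}$, and $\{\phi=0\}$.

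Next I would compute the codimension of each piece. The framing locus $\{v=0\}$ has codimension $d_i$ and the framing locus $\{\phi=0\}$ has codimension $d_j$. The middle piece is the product of the unstable locus $\Rep(Q,\mathbf{d})\setminus\Rep^{\stable\theta}(Q,\mathbf{d})$ with two affine spaces; since forming a product with an affine space preserves codimension, its codimension equals $\codim_{\Rep(Q,\mathbf{d})}(\Rep(Q,\mathbf{d})\setminus\Rep^{\stable\theta}(Q,\mathbf{d}))$, which is at least $2$ because $\mathbf{d}$ is $\theta$-amply stable by \cref{assumption:standing-assumption}.

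Finally, since $\Rep(\overline{Q},\overline{\mathbf{d}})$ is an affine space and hence irreducible, the codimension of a finite union of closed subsets equals the minimum of their codimensions. The non-stable locus therefore has codimension equal to the minimum of $d_i$, $d_j$, and a quantity that is at least $2$. This minimum is at least $2$ exactly when $d_i\geq 2$ and $d_j\geq 2$: if both hold then all three contributions are $\geq 2$ and $\overline{\mathbf{d}}$ is $\overline{\theta}$-amply stable, whereas if $d_i=1$ or $d_j=1$ the corresponding framing locus already has codimension $1$ and ample stability fails. This is the desired equivalence, and the same argument applies verbatim when $i=j$, since the product decomposition and the two framing conditions are unaffected.

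I do not expect a serious obstacle, as the argument reduces to bookkeeping once \cref{proposition:semistability-double-framed} is in hand. The one point meriting care is the claim that the two framing factors leave the codimension of the unstable-$M$ stratum unchanged: this rests on the irreducibility of $\Rep(Q,\mathbf{d})$ (it is an affine space), so that forming the product with $\mathbb{A}^{d_i}\times\mathbb{A}^{d_j}$ neither raises nor lowers codimension, together with the fact that $\theta$-ample stability bounds that codimension below by $2$ independently of the framing. Hence the framing conditions $v=0$ and $\phi=0$ are the only possible sources of a codimension-one stratum, which pins the equivalence exactly on the inequalities $d_i>1$ and $d_j>1$.
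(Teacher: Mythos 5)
Your proposal is correct and follows essentially the same route as the paper: both decompose the $\overline{\theta}$-unstable locus via \cref{proposition:semistability-double-framed} into the three pieces $\{v=0\}$, $\{\phi=0\}$, and the locus where $M$ is $\theta$-unstable, and then read off the codimensions $d_i$, $d_j$, and $\geq 2$ respectively. The only difference is that you spell out the bookkeeping (irreducibility of the ambient affine space, codimension of a finite union being the minimum) that the paper leaves implicit.
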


\begin{proof}
  By \cref{proposition:semistability-double-framed}, the $\overline{\theta}$-unstable locus is the union
  \begin{equation}
    \representationvariety{\overline{Q},\overline{\mathbf{d}}} \setminus \representationvariety[\semistable{\overline{\theta}}]{\overline{Q},\overline{\mathbf{d}}} = \{ (v,M,\phi) \mid M \text{ is $\overline{\theta}$-unstable}\} \cup \{(v,M,\phi) \mid v = 0\} \cup \{(v,M,\phi) \mid \phi = 0\}.
  \end{equation}
  The first set has codimension at least 2 by $\theta$-ample stability of $\mathbf{d}$. The second set has codimension $d_i$ and the third has codimension $d_j$. This proves the claimed equivalence.
\end{proof}

We have the forgetful morphism
\begin{equation}
  u\colon\modulispace[\stable{\overline{\theta}}]{\overline{Q},\overline{\mathbf{d}}} \to \modulispace[\stable{\theta}]{Q,\mathbf{d}}:
  [v,M,\phi]\mapsto[M],
\end{equation}
which forgets the framing data.
We obtain $u^*\mathcal{U}_k \cong \overline{\mathcal{U}}_k$ for~$k\in Q_0$,
by choosing the character~$\overline{\mathbf{a}}$ as~$(0,\mathbf{a},0)$.

Using \cref{proposition:semistability-double-framed} we are now going to show the two spaces
\begin{align}
  \label{equation:overline-X}
  \overline{X}&\colonequals\modulispace[\stable{\overline{\theta}}]{\overline{Q},\overline{\mathbf{d}}} \\
  Y &\colonequals\mathbb{P}(\mathcal{U}_i^\vee) \times_X \mathbb{P}(\mathcal{U}_j)
\end{align}
are isomorphic. To prove this, let $f\colon Y \to X$ be the diagonal morphism in the diagram \eqref{equation:cartesian-product}.
\begin{proposition}
  \label{proposition:doubly-framed-as-quiver-moduli}
  There exists an isomorphism $Y\cong \overline{X}$ over $X$
  such that
  \begin{itemize}
    \item $\overline{\mathcal{U}}_k$ is identified with~$f^*\mathcal{U}_k$ for all $k \in Q_0$
      and $\overline{\mathcal{U}}_a$ is identified with~$f^*\mathcal{U}_a$ for all $a \in Q_1$,
    \item $\overline{\mathcal{U}}_0$ is identified with~$\mathcal{O}_Y(-1,0)$
      and $\overline{\mathcal{U}}_{0 \to i}$ is identified with $\mathcal{O}_Y(-1,0) \to f^*\mathcal{U}_i$,
    \item $\overline{\mathcal{U}}_\infty$ is identified with~$\mathcal{O}_Y(0,1)$
      and $\overline{\mathcal{U}}_{j \to \infty}$ is identified with $f^*\mathcal{U}_j \to \mathcal{O}_Y(0,1)$.
  \end{itemize}
\end{proposition}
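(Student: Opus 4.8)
The plan is to produce two mutually inverse morphisms over $X$ by playing the two universal properties in sight against each other: the fine-moduli property of $\overline{X}$ (which holds by \cref{corollary:doubly-framed-standing-assumption}, since indivisibility together with stability${}={}$semistability already guarantees a fine moduli space carrying a universal representation, ample stability not being needed for this) and the universal property of the fibre product of projective bundles defining $Y$ in \eqref{equation:cartesian-product}.

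First I would build a family of $\overline{\theta}$-stable $\overline{Q}$-representations on $Y$. Pulling $\mathcal{U}$ back along the diagonal $f\colon Y\to X$ gives the $Q$-part $f^*\mathcal{U}$, and the framing data come from the tautological maps of the two projective bundles. On $\mathbb{P}(\mathcal{U}_i^\vee)$ the universal quotient $p^*\mathcal{U}_i^\vee\twoheadrightarrow\mathcal{O}(1)$ dualises to a sub-line-bundle inclusion $\mathcal{O}(-1)\hookrightarrow p^*\mathcal{U}_i$, which pulls back along $q'$ to $\mathcal{O}_Y(-1,0)\to f^*\mathcal{U}_i$; on $\mathbb{P}(\mathcal{U}_j)$ the universal quotient $q^*\mathcal{U}_j\twoheadrightarrow\mathcal{O}(1)$ pulls back along $p'$ to $f^*\mathcal{U}_j\to\mathcal{O}_Y(0,1)$. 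Setting $\overline{\mathcal{V}}_0\colonequals\mathcal{O}_Y(-1,0)$, $\overline{\mathcal{V}}_\infty\colonequals\mathcal{O}_Y(0,1)$, $\overline{\mathcal{V}}_k\colonequals f^*\mathcal{U}_k$, and taking these two maps as the arrows $0\to i$ and $j\to\infty$, I obtain a representation $\overline{\mathcal{V}}$ of $\overline{Q}$ in vector bundles on $Y$. Its fibre over a point above $([M],\langle v\rangle,\langle\phi\rangle)$ is the triple $(v,M,\phi)$ with $v\neq0$ and $\phi\neq0$, hence $\overline{\theta}$-stable by \cref{proposition:semistability-double-framed}. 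With the normalisation $\overline{\mathbf{a}}=(0,\mathbf{a},0)$, which satisfies $\overline{\mathbf{a}}\cdot\overline{\mathbf{d}}=1$, the classifying morphism $g\colon Y\to\overline{X}$ furnished by the moduli property then satisfies $g^*\overline{\mathcal{U}}\cong\overline{\mathcal{V}}$, giving the three stated identifications. Checking this last normalisation claim — that the framing line bundles $g^*\overline{\mathcal{U}}_0$ and $g^*\overline{\mathcal{U}}_\infty$ come out exactly as $\mathcal{O}_Y(-1,0)$ and $\mathcal{O}_Y(0,1)$ rather than some twist — is the one point needing genuine care, and it is where the choice $\overline{\mathbf{a}}=(0,\mathbf{a},0)$ does the work, since it pins down $\overline{\mathcal{U}}$ through its $Q_0$-part via $u^*\mathcal{U}_k\cong\overline{\mathcal{U}}_k$.

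Conversely I would read a morphism $h\colon\overline{X}\to Y$ directly off the universal representation $\overline{\mathcal{U}}$. The framing arrow $\overline{\mathcal{U}}_{0\to i}\colon\overline{\mathcal{U}}_0\to u^*\mathcal{U}_i$ is fibrewise injective (as $v\neq0$), hence a sub-line-bundle of $u^*\mathcal{U}_i$; dualising gives a line-bundle quotient of $u^*\mathcal{U}_i^\vee$ and therefore, by the universal property of $\mathbb{P}_X(\mathcal{U}_i^\vee)$, a morphism $\overline{X}\to\mathbb{P}(\mathcal{U}_i^\vee)$ over $X$ pulling $\mathcal{O}(1)$ back to $\overline{\mathcal{U}}_0^\vee$. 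Symmetrically $\overline{\mathcal{U}}_{j\to\infty}\colon u^*\mathcal{U}_j\to\overline{\mathcal{U}}_\infty$ is fibrewise surjective (as $\phi\neq0$), yielding a morphism $\overline{X}\to\mathbb{P}(\mathcal{U}_j)$ over $X$. The universal property of the fibre product $Y$ assembles these two maps, together with $u$, into a single morphism $h$ over $X$ with $h^*\mathcal{O}_Y(-1,0)\cong\overline{\mathcal{U}}_0$ and $h^*\mathcal{O}_Y(0,1)\cong\overline{\mathcal{U}}_\infty$.

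Finally I would verify that $g$ and $h$ are mutually inverse. On one side $h^*\overline{\mathcal{V}}\cong\overline{\mathcal{U}}$ — the $Q_0$-parts agree because $f\circ h=u$, and the framing maps agree by construction — so $g\circ h$ classifies $\overline{\mathcal{U}}$ and thus equals $\mathrm{id}_{\overline{X}}$ by uniqueness of classifying maps. On the other side, comparing the pullbacks of the two tautological quotients shows $q'\circ(h\circ g)=q'$ and $p'\circ(h\circ g)=p'$, whence $h\circ g=\mathrm{id}_Y$ by the uniqueness clause in the universal property of the fibre product. This produces the isomorphism $Y\cong\overline{X}$ over $X$ together with all the stated identifications. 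As flagged above, the only real obstacle is bookkeeping: ensuring that the universal bundle of $\overline{X}$, determined only up to a global twist by a line bundle on the base, is pinned to the representative $\overline{\mathcal{V}}$ by the choice of $\overline{\mathbf{a}}$, so that the identifications hold on the nose rather than merely up to such a twist.
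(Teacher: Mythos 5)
Your proposal is correct and follows essentially the same route as the paper: both directions are obtained by playing the universal property of the fibre product $Y$ against the fine-moduli property of $\overline{X}$, with \cref{proposition:semistability-double-framed} supplying the nonvanishing of $v$ and $\phi$ needed to invoke each. Your verification that the two maps are mutually inverse (via uniqueness of classifying maps and the uniqueness clause of the fibre product) and your explicit attention to the normalisation $\overline{\mathbf{a}}=(0,\mathbf{a},0)$ are in fact slightly more careful than the paper's own check on closed points.
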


\begin{proof}
  We obtain morphisms in both directions from the universal properties of~$Y$ and~$\overline{X}$ as follows.

  Let~$(v,M,\phi)$ be a~$\overline{\theta}$-stable doubly-framed representation.
  Then~$v \neq 0$ and~$\phi \neq 0$ by \cref{proposition:semistability-double-framed}.
  This implies that~$\overline{\mathcal{U}}_0$ is a line subbundle of~$\overline{\mathcal{U}}_i = u^*\mathcal{U}_i$
  and~$\overline{\mathcal{U}}_\infty$ is a line bundle quotient of~$\overline{\mathcal{U}}_j = u^*\mathcal{U}_j$.
  The universal property of~$Y$ thus yields a morphism~$\overline{X} \to Y$ over $X$
  under which~$f^*\mathcal{U}_i^\vee \to \mathcal{O}_Y(1,0)$
  pulls back to~$\smash{\overline{\mathcal{U}}}_{0 \to i}^\vee\colon \smash{\overline{\mathcal{U}}}_i^\vee \to \smash{\overline{\mathcal{U}}}_0^\vee$
  and~$f^*\mathcal{U}_j \to \mathcal{O}_Y(0,1)$
  pulls back to~$\smash{\overline{\mathcal{U}}_{j \to \infty}}\colon \smash{\overline{\mathcal{U}}_j} \to \smash{\overline{\mathcal{U}}}_\infty$.

  Conversely,
  consider the pullback~$f^*\mathcal{U}$
  of the universal representation on~$X = \modulispace[\stable{\theta}]{Q,\mathbf{d}}$.
  Using
  the morphisms~$\mathcal{O}_Y(0,1)^\vee \to f^*\mathcal{U}_i$
  and~$\mathcal{O}_Y(1,0)$ and~$\mathcal{U}_j \to \mathcal{O}_Y(1,0)$,
  we obtain a representation of~$\overline{Q}$ over~$Y$ of rank vector~$\smash{\overline{\mathbf{d}}}$
  such that the fiber over every point in~$Y$ is~$\smash{\overline{\theta}}$-stable;
  the last statement follows again from \cref{proposition:semistability-double-framed}.
  The universal property of~$\smash{\overline{X}}$ yields a morphism~$\smash{Y \to \overline{X}}$
  under which the universal representation~$\smash{\overline{\mathcal{U}}}$
  pulls back to the representation over~$Y$ described above.

  The two morphisms can be easily seen to be inverse bijections on closed points,
  which suffices to show that~$\overline{X}$ and~$Y$ are isomorphic.
  The identification of the universal bundles follows from the universal properties
  used in the construction.
  This proves the proposition.
\end{proof}

The previous proposition implies, using \eqref{equation:cohomology-Y-vs-X}:

\begin{corollary}
  \label{corollary:double-framed-global-sections}
  The diagram
  \begin{equation}
    \label{equation:diagram-double-framed-global-sections}
    \begin{tikzcd}
      p \arrow[mapsto]{d}{} &[-2em] e_j\field Qe_i \arrow{r}{h_{i,j}^\mathcal{U}} \arrow{d}{} & \HH^0(X,\mathcal{U}_i^\vee \otimes \mathcal{U}_j) \arrow{d}{} &[-2em] s \arrow[mapsto]{d}{} \\
      (j\to\infty)p(0\to i)& e_\infty \field\overline{Q}e_0 \arrow{r}{h_{0,\infty}^{\smash{\overline{\mathcal{U}}}}} & \HH^0(\overline{X},\smash{\overline{\mathcal{U}}}_0^\vee \otimes \smash{\overline{\mathcal{U}}}_\infty) & \smash{\overline{\mathcal{U}}}_{j \to \infty} \circ u^*s \circ \smash{\overline{\mathcal{U}}}_{0 \to i}
    \end{tikzcd}
  \end{equation}
  is commutative and the vertical maps are isomorphisms.
\end{corollary}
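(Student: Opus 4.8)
The plan is to transport the whole diagram through the isomorphism $\overline{X}\cong Y$ of \cref{proposition:doubly-framed-as-quiver-moduli}. Under that identification the forgetful morphism $u$ becomes the diagonal $f$, the line bundles $\overline{\mathcal{U}}_0,\overline{\mathcal{U}}_\infty$ become $\mathcal{O}_Y(-1,0),\mathcal{O}_Y(0,1)$, so that $\overline{\mathcal{U}}_0^\vee\otimes\overline{\mathcal{U}}_\infty\cong\mathcal{O}_Y(1,1)$, and the structure maps $\overline{\mathcal{U}}_{0\to i}$, $\overline{\mathcal{U}}_{j\to\infty}$ become the tautological inclusion $\mathcal{O}_Y(-1,0)\hookrightarrow f^*\mathcal{U}_i$ and the tautological surjection $f^*\mathcal{U}_j\twoheadrightarrow\mathcal{O}_Y(0,1)$. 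After these identifications the right-hand vertical map sends a global section $s$, viewed as a morphism $\mathcal{U}_i\to\mathcal{U}_j$, to the composite $\mathcal{O}_Y(-1,0)\to f^*\mathcal{U}_i\xrightarrow{f^*s}f^*\mathcal{U}_j\to\mathcal{O}_Y(0,1)$, and the bottom horizontal map $h_{0,\infty}^{\overline{\mathcal{U}}}$ is the same recipe applied to words in the arrows of $\overline{Q}$. Reducing to this concrete picture is the organizing step.

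Next I would check that the left vertical map is an isomorphism by a path count. Since the vertex $0$ carries no incoming arrow and $\infty$ no outgoing arrow in $\overline{Q}$, every oriented path from $0$ to $\infty$ must begin with $0\to i$ and end with $j\to\infty$; deleting these two arrows leaves an oriented path from $i$ to $j$ in $Q$. Hence $p\mapsto (j\to\infty)p(0\to i)$ is a bijection from the path basis of $e_j\field Qe_i$ onto the path basis of $e_\infty\field\overline{Q}e_0$, so the left vertical map is a linear isomorphism.

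The substantive point is that the right vertical map is exactly the canonical isomorphism \eqref{equation:cohomology-Y-vs-X}. Recall that \eqref{equation:cohomology-Y-vs-X} comes from the sheaf isomorphism $f_*\mathcal{O}_Y(1,1)\cong\mathcal{U}_i^\vee\otimes\mathcal{U}_j$, obtained by pushing $\mathcal{O}_Y(1,1)$ first along $p'$ and then along $q$, using the projection formula together with $p'_*\mathcal{O}_Y(1,0)=q^*\mathcal{U}_i^\vee$ and $q_*\mathcal{O}_{\mathbb{P}(\mathcal{U}_j)}(1)=\mathcal{U}_j$. I would show that this sheaf isomorphism is the composite of the adjunction unit $\mathcal{U}_i^\vee\otimes\mathcal{U}_j\to f_*f^*(\mathcal{U}_i^\vee\otimes\mathcal{U}_j)$ with $f_*$ applied to the tautological evaluation $f^*\mathcal{U}_i^\vee\otimes f^*\mathcal{U}_j\to\mathcal{O}_Y(1,0)\otimes\mathcal{O}_Y(0,1)=\mathcal{O}_Y(1,1)$ built from the sub/quotient maps above. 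On global sections this composite is precisely ``pull back along $f$ and then compose with $\overline{\mathcal{U}}_{0\to i}$ and $\overline{\mathcal{U}}_{j\to\infty}$'', which is the right vertical map; and since \eqref{equation:cohomology-Y-vs-X} is an isomorphism, so is it.

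Finally, commutativity is a diagram chase. For a path $p\in e_j\field Qe_i$, going right then down yields $h_{i,j}^{\mathcal{U}}(p)=\mathcal{U}_p$ followed by $\overline{\mathcal{U}}_{j\to\infty}\circ f^*\mathcal{U}_p\circ\overline{\mathcal{U}}_{0\to i}$, while going down then along the bottom yields the path $(j\to\infty)p(0\to i)$ followed by $h_{0,\infty}^{\overline{\mathcal{U}}}$, giving $\overline{\mathcal{U}}_{j\to\infty}\circ\overline{\mathcal{U}}_p\circ\overline{\mathcal{U}}_{0\to i}$. As $\overline{\mathcal{U}}_a=f^*\mathcal{U}_a$ for $a\in Q_1$ by \cref{proposition:doubly-framed-as-quiver-moduli}, we have $\overline{\mathcal{U}}_p=f^*\mathcal{U}_p$ and the two coincide. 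I expect the only genuine obstacle to be the third paragraph: matching the canonical isomorphism \eqref{equation:cohomology-Y-vs-X}, assembled from the projection formula and flat base change, with the explicit ``pull back and compose with the tautological maps'' recipe, i.e.\ recognizing the adjunction unit inside that identification. Once this naturality is established, the remaining claims are formal.
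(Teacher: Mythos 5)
Your proposal is correct and follows exactly the route the paper intends: the paper gives no written proof, merely asserting that the corollary follows from \cref{proposition:doubly-framed-as-quiver-moduli} together with \eqref{equation:cohomology-Y-vs-X}, and your argument is a careful unwinding of precisely those two ingredients (transport along $\overline{X}\cong Y$, the path count for the left vertical map, and the identification of the right vertical map with the projection-formula isomorphism). You also correctly isolate the one point that genuinely requires checking, namely that the canonical isomorphism \eqref{equation:cohomology-Y-vs-X} agrees with the explicit ``pull back and compose with the tautological sub/quotient maps'' description.
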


Next, let us give a description of $\overline{X}$ from \eqref{equation:overline-X}
as a quiver moduli space,
which in addition satisfies condition \ref{item:amply-stable} of \cref{assumption:standing-assumption}.

\begin{proposition}
  \label{proposition:annoying-special-cases-ample-stability}
  Let~$Q$, $\mathbf{d}$ and~$\theta$ be as in \cref{assumption:standing-assumption},
  and let~$\overline{Q}$, $\overline{\mathbf{d}}$ and~$\overline{\theta}$ be as constructed above.
  There exists a full subquiver $Q'$ of $\overline{Q}$ with $Q_0 \subseteq Q'_0$ and two vertices $i',j' \in Q'_0$ such that:
  \begin{enumerate}[label=\textnormal{(\roman*)}]
    \item \label{item:bijection-paths} There exist paths $q_0\colon 0 \to i'$ and $q_\infty\colon j' \to \infty$
      of lengths at most 1 such that
      \begin{equation}
        e_{j'}\field Q' e_{i'} \to e_\infty \field\overline{Q} e_0,\ p \mapsto q_\infty pq_0
      \end{equation}
      is an isomorphism.
    \item Let $\mathbf{d}' = \smash{\overline{\mathbf{d}}|_{Q'_0}}$ and consider the forgetful map
      \begin{equation}
        \representationvariety{\overline{Q},\overline{d}} \to \representationvariety{Q',\mathbf{d}'},\ N \mapsto N|_{Q'}.
      \end{equation}
      There exists a stability parameter $\theta'$ with $\theta'\cdot \mathbf{d}' = 0$
      such that for every representation $N$ of $\overline{Q}$ of dimension vector $\overline{\mathbf{d}}$
      which is $\overline{\theta}$-(semi)stable the representation $N|_{Q'}$ is $\theta'$-(semi)stable.
    \item For
      \begin{equation}
        \label{equation:X-prime}
        X' \colonequals \modulispace[\stable{\theta'}]{Q',\mathbf{d}'}
      \end{equation}
      with universal bundle~$\mathcal{U}'$
      the map $g\colon \overline{X} \to X'$ induced by $N \mapsto N|_{Q'}$
      is an isomorphism such that $\overline{\mathcal{U}}_{q_0}\colon \overline{\mathcal{U}}_0 \to \overline{\mathcal{U}}_{i'} = g^*\mathcal{U}'_{i'}$
      and~$\overline{\mathcal{U}}_{q_\infty}\colon g^*\mathcal{U}'_{j'} = \overline{\mathcal{U}} \to \overline{\mathcal{U}}_\infty$
      are isomorphisms.
    \item The data $Q'$, $\mathbf{d}'$, and $\theta'$ fulfill \cref{assumption:standing-assumption}, and $d'_{i'} = d'_{j'} = 1$
  \end{enumerate}
\end{proposition}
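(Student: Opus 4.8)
The plan is to build $Q'$, $\mathbf{d}'$ and $\theta'$ by \emph{contracting} each framing vertex whose attached original vertex already has dimension one, treating the two framings at $0$ and $\infty$ independently. Concretely, if $d_i>1$ I keep the vertex $0$ and put $i'\colonequals 0$, $q_0\colonequals e_0$ (the trivial path); if $d_i=1$ I delete the vertex $0$ together with the arrow $0\to i$, and put $i'\colonequals i$, $q_0\colonequals(0\to i)$. I do the symmetric thing for $\infty$ against $j$, obtaining a full subquiver $Q'$ with $Q_0\subseteq Q'_0$ and $\mathbf{d}'\colonequals\overline{\mathbf{d}}|_{Q'_0}$. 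Since $d_0=d_\infty=1$ and since we only set $i'=i$ (resp.\ $j'=j$) when $d_i=1$ (resp.\ $d_j=1$), we immediately get $d'_{i'}=d'_{j'}=1$, settling the last clause of~(iv). For~(i), every oriented path $0\to\infty$ in $\overline{Q}$ factors uniquely as $(j\to\infty)\,r\,(0\to i)$ with $r$ a path $i\to j$ in $Q$, so $e_\infty\field\overline{Q}e_0\cong e_j\field Qe_i$; the contraction is designed exactly so that $p\mapsto q_\infty pq_0$ reproduces this identification from $e_{j'}\field Q'e_{i'}$, and $q_0,q_\infty$ have length $0$ or $1$ by construction.

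Next I define $\theta'$ by keeping $N\theta$ on the uncontracted vertices of $Q_0$, transferring the weight $+1$ of a contracted $0$ onto $i$ and the weight $-1$ of a contracted $\infty$ onto $j$, and retaining the weights $\pm 1$ on any surviving framing vertex. A one-line computation, using $d_i=1$ (resp.\ $d_j=1$) precisely in the contracted case, gives $\theta'\cdot\mathbf{d}'=0$. For the comparison in~(ii), given a subrepresentation of $N|_{Q'}$ of dimension vector $\mathbf{e}'$ I produce a subrepresentation of $N$ by reinstating each contracted framing line exactly when its attached vertex is saturated (that is, when the $i$\dash coordinate, resp.\ $j$\dash coordinate, of $\mathbf{e}'$ equals $1$). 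Because $v\neq 0$, $\phi\neq 0$ and the relevant dimension is one, this is always a genuine subrepresentation of $N$, and by the weight-transfer its $\overline{\theta}$-weight equals $\theta'(\mathbf{e}')$; moreover proper nonzero subrepresentations correspond to proper nonzero ones. Hence $\overline{\theta}$-(semi)stability of $N$ forces $\theta'$-(semi)stability of $N|_{Q'}$, which is~(ii).

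For~(iii), the comparison shows the forgetful assignment $N\mapsto N|_{Q'}$ descends to a morphism $g\colon\overline{X}\to X'$, and an inverse is constructed as in \cref{proposition:doubly-framed-as-quiver-moduli} by reattaching each contracted framing line as the line bundle $\overline{\mathcal{U}}_{i'}$ (resp.\ $\overline{\mathcal{U}}_{j'}$) with structure map the identity; this forces $\overline{\mathcal{U}}_{q_0}$ and $\overline{\mathcal{U}}_{q_\infty}$ to be isomorphisms, and bijectivity on closed points is then immediate, giving $Y\cong\overline{X}\cong X'$ with the asserted identifications of universal bundles. For~(iv), acyclicity and indivisibility of $(Q',\mathbf{d}')$ are clear (a full subquiver of the acyclic $\overline{Q}$, with a dimension-one entry), and the equality of semistability and stability transfers through the comparison from \cref{corollary:doubly-framed-standing-assumption}. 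The essential point is ample stability: analyzing the $\theta'$-unstable locus in $\representationvariety{Q',\mathbf{d}'}$ exactly as in \cref{lemma:ample-stability-generic-case} exhibits it as the union of $\{M\text{ $\theta$-unstable}\}$, of codimension at least $2$ by $\theta$-ample stability of $\mathbf{d}$, together with one stratum $\{v=0\}$ or $\{\phi=0\}$ for each \emph{surviving} framing vertex, of codimension $d_i$ resp.\ $d_j$, which exceeds $1$ exactly because that framing vertex was kept only when the dimension is $>1$.

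The main obstacle is the stability comparison of~(ii): one must pick, for each subrepresentation of the restricted representation, the correct lift with the contracted framing spaces reinstated, verify it is an honest subrepresentation, and check that the weight is preserved. This is where $v\neq 0$ and $\phi\neq 0$ together with $d_i=1$ (resp.\ $d_j=1$) are used essentially, and carrying out this bookkeeping consistently across the two independent contraction choices at $0$ and $\infty$ is the heart of the argument; the ample-stability count in~(iv) is then the payoff that motivates performing the contraction in the first place, since contracting the low-dimensional framings is precisely what removes the offending codimension-one strata of \cref{lemma:ample-stability-generic-case}.
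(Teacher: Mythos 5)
Your construction is the one the paper uses: the same case split (keep or contract each of the two framing vertices according to whether $d_i$, resp.~$d_j$, exceeds $1$), the same choices of $i'$, $j'$, $q_0$, $q_\infty$, and the same reduction of the ample-stability count to \cref{lemma:ample-stability-generic-case}. The only real difference is the choice of $\theta'$: the paper takes the framed stability parameter of \cite[Proposition~3.3]{MR2511752} at the surviving framing vertex and cites that result for the characterization of the (semi)stable locus, while you transfer the weights $\pm1$ of the contracted vertices onto $i$ and $j$. Your $\theta'$ does cut out the same locus, so this is a legitimate variant.

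There is, however, one load-bearing step that you assert rather than prove. Your lifting argument establishes only the implication required by (ii): if $N$ is $\overline{\theta}$-(semi)stable then $N|_{Q'}$ is $\theta'$-(semi)stable. Parts (iii) and (iv) need the \emph{converse} characterization of $\theta'$-(semi)stability on all of $\representationvariety{Q',\mathbf{d}'}$, namely that a representation of $Q'$ of dimension vector $\mathbf{d}'$ is $\theta'$-semistable iff it is $\theta'$-stable iff its underlying $Q$-representation is $\theta$-stable and the surviving framing data are nonzero. Without this, ``the equality of semistability and stability transfers through the comparison'' is a non sequitur: a priori there could be $\theta'$-semistable representations of $Q'$ that are not restrictions of $\overline{\theta}$-semistable representations of $\overline{Q}$. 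Likewise, the identification of the $\theta'$-unstable locus needed for the codimension count in (iv), and the surjectivity of $g$ in (iii) (you must know that every $\theta'$-stable point of $\representationvariety{Q',\mathbf{d}'}$ is hit after reattaching the contracted framings), are statements about all points of $\representationvariety{Q',\mathbf{d}'}$, not only those in the image of the forgetful map. The paper closes this by invoking \cite[Proposition~3.3]{MR2511752}; with your $\theta'$ the statement is still true and follows by redoing the analysis of \cref{lemma:new-B} and \cref{proposition:semistability-double-framed} for $(Q',\mathbf{d}',\theta')$, but that argument has to be supplied.
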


\begin{proof}
  We need to distinguish between different scenarios.
  \begin{enumerate}[label=(\alph*)]
    \item
      If $d_i > 1$ and $d_j > 1$, then we set $Q'\colonequals\overline{Q}$, $i'\colonequals0$, and~$j'\colonequals\infty$.
      \Cref{item:bijection-paths} is fulfilled with $q_0 = e_0$ and $q_\infty = e_\infty$.
      We get $\mathbf{d}'=\overline{\mathbf{d}}$ and take $\theta'\colonequals\overline{\theta}$.
      The map $g$ is the identity.
      \Cref{lemma:ample-stability-generic-case} tells us that~%
      $\mathbf{d}'$ is amply stable for $\theta'$ in this case.
      Together with \cref{corollary:doubly-framed-standing-assumption},
      we now deduce that \cref{assumption:standing-assumption} is satisfied.

    \item \label{item:single-framed-1}
      If $d_i > 1$ and $d_j = 1$,
      then let $Q'$ be the full subquiver with $Q'_0 = \overline{Q}_0 \setminus \{\infty\} = Q_0 \sqcup \{0\}$ which is acyclic. Let $i'=0$ and $j'=j$. Then with $q_0 = e_0$ and $q_\infty = (j \to \infty)$, \cref{item:bijection-paths} is satisfied.

      The dimension vector $\mathbf{d}' = (1,\mathbf{d})$ is indivisible, so it meets \cref{assumption:standing-assumption}\ref{item:indivisible}.
      Let~$\theta'$ be given by~$(\left|\mathbf{d}\right|,(\left|\mathbf{d}\right|+1)N\theta-1,0)$ for~$N\gg0$.
      Here, $\left|\mathbf{d}\right| = \sum_{i \in Q_0} d_i$.

      A representation of $Q'$ of dimension vector~$\mathbf{d}'$ is
      a pair~$(v,M)$ consisting of a representation~$M$ of~$Q$ of dimension vector~$\mathbf{d}$
      and a vector~$v \in M_i$.
      In \cite[Proposition~3.3]{MR2511752} it is shown that the following are equivalent:
      \begin{itemize}
        \item $(v,M)$ is $\theta'$-semistable
        \item $(v,M)$ is $\theta'$-stable
        \item $M$ is $\theta$-semistable and $v \neq 0$.
      \end{itemize}
      This shows that \cref{assumption:standing-assumption}\ref{item:semistable=stable} holds,
      and also, with the same proof as \cref{lemma:ample-stability-generic-case},
      that \cref{assumption:standing-assumption}\ref{item:amply-stable} is satisfied.

      The forgetful map is
      \begin{equation}
        \representationvariety{\overline{Q},\overline{d}} \to \representationvariety{Q',\mathbf{d}'},\ (v,M,\phi) \mapsto (v,M).
      \end{equation}
      A representation $(v,M,\phi)$ of dimension vector $\overline{\mathbf{d}}$ is $\overline{\theta}$-semistable
      if and only if $\phi$ is an isomorphism and $(v,M)$ is $\theta'$-stable.
      As we can use the $\group{\overline{\mathbf{d}}}$-action to transform $\phi$ to the identity,
      we obtain an isomorphism
      \begin{equation}
        g\colon\modulispace[\stable{\overline{\theta}}]{\overline{Q},\overline{\mathbf{d}}} \to \modulispace[\stable{\theta'}]{Q',\mathbf{d}'}
      \end{equation}
      which pulls back $\mathcal{U}'_{i'}$ to $\overline{\mathcal{U}}_0$
      and $\mathcal{U}'_{j'}$ to $\overline{\mathcal{U}}_j \cong \overline{\mathcal{U}}_\infty$.

    \item If $d_i = 1$ and $d_j > 1$,
      then we define $Q'$ as the full subquiver with $Q'_0 = \overline{Q}_0 \setminus \{0\} = Q_0 \sqcup \{\infty\}$.
      Let $i'=i$, $j'=\infty$, and $\theta'$ be defined by $((\left|\mathbf{d}\right|+1)N\theta+1,-\left|\mathbf{d}\right|)$ for~$N\gg0$.
      We may argue dually to the proof of \cref{item:single-framed-1}.

    \item If $d_i = d_j = 1$, then we let $Q' = Q$, $i'=i$, and $j'=j$. With $\theta' = \theta$ the claim is then obviously true.
      \qedhere
  \end{enumerate}
\end{proof}

\begin{corollary}
  \label{corollary:amply-stable-global-sections}
  The diagram
  \begin{equation}
    \label{equation:diagram-amply-stable-global-sections}
    \begin{tikzcd}
      p \arrow[mapsto]{d}{} &[-2em] e_{j'}\field Q'e_{i'} \arrow{r}{h_{i',j'}^{\mathcal{U}'}} \arrow{d}{} & \HH^0(X,{\mathcal{U}_{i'}}^{\prime\vee} \otimes \mathcal{U}'_{j'}) \arrow{d}{} &[-2em] s \arrow[mapsto]{d}{} \\
      q_\infty pq_0& e_\infty \field\overline{Q}e_0 \arrow{r}{h_{0,\infty}^{\smash{\overline{\mathcal{U}}}}} & \HH^0(\overline{X},\smash{\overline{\mathcal{U}}}_0^\vee \otimes \smash{\overline{\mathcal{U}}}_\infty) & \smash{\overline{\mathcal{U}}}_{q_\infty} \circ g^*s \circ \smash{\overline{\mathcal{U}}}_{q_0}
    \end{tikzcd}
  \end{equation}
  is commutative and the vertical maps are isomorphisms.
\end{corollary}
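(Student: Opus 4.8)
The plan is to run exactly the argument by which \cref{corollary:double-framed-global-sections} was deduced from \cref{proposition:doubly-framed-as-quiver-moduli}, now feeding in the data recorded in \cref{proposition:annoying-special-cases-ample-stability}. In fact the present situation is slightly simpler: whereas in \cref{corollary:double-framed-global-sections} the forgetful map was a projective bundle and one had to invoke the cohomology comparison \eqref{equation:cohomology-Y-vs-X}, here the comparison map $g\colon\overline{X}\to X'$ is an \emph{isomorphism} of varieties, so pullback alone does the job. The left vertical map $p\mapsto q_\infty p q_0$ is precisely the map of \cref{proposition:annoying-special-cases-ample-stability}\,\ref{item:bijection-paths}, which is asserted there to be an isomorphism; so what remains is to check that the right vertical map is an isomorphism and that the square commutes.

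For the right vertical map I would factor it as
\[
  \Hom_{X'}(\mathcal{U}'_{i'},\mathcal{U}'_{j'})
  \xrightarrow{g^*}
  \Hom_{\overline{X}}(g^*\mathcal{U}'_{i'},g^*\mathcal{U}'_{j'})
  \xrightarrow{\ \sim\ }
  \Hom_{\overline{X}}(\smash{\overline{\mathcal{U}}}_0,\smash{\overline{\mathcal{U}}}_\infty),
\]
where the first arrow is pullback along $g$ and the second is pre- and post-composition with $\smash{\overline{\mathcal{U}}}_{q_0}$ and $\smash{\overline{\mathcal{U}}}_{q_\infty}$, using the identifications $g^*\mathcal{U}'_{i'}=\smash{\overline{\mathcal{U}}}_{i'}$ and $g^*\mathcal{U}'_{j'}=\smash{\overline{\mathcal{U}}}_{j'}$ from \cref{proposition:annoying-special-cases-ample-stability}. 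Since $g$ is an isomorphism, $g^*$ induces an isomorphism on global $\Hom$-spaces, and since $\smash{\overline{\mathcal{U}}}_{q_0}$ and $\smash{\overline{\mathcal{U}}}_{q_\infty}$ are isomorphisms of bundles (again by that proposition), pre- and post-composition with them is an isomorphism. Transporting along the natural identification $\HH^0(-,(-)^\vee\otimes(-))\cong\Hom(-,-)$ used to define the maps $h$ in \eqref{equation:global-sections}, this composite is the right vertical map, which is therefore an isomorphism.

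For commutativity I would trace a single path $p\colon i'\to j'$ in $Q'$ through the square and then extend $\field$-linearly. Going down and then right, $p$ is sent to the path $q_\infty p q_0\colon 0\to\infty$ and then, by the definition \eqref{equation:local-morphism}--\eqref{equation:global-sections} of $h$ as the composition of universal bundle maps along a path, to $\smash{\overline{\mathcal{U}}}_{q_\infty}\circ\smash{\overline{\mathcal{U}}}_p\circ\smash{\overline{\mathcal{U}}}_{q_0}$. Going right and then down, $p$ is sent to $h^{\mathcal{U}'}_{i',j'}(p)=\mathcal{U}'_p$ and then to $\smash{\overline{\mathcal{U}}}_{q_\infty}\circ g^*(\mathcal{U}'_p)\circ\smash{\overline{\mathcal{U}}}_{q_0}$. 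The two expressions coincide once we know $g^*(\mathcal{U}'_p)=\smash{\overline{\mathcal{U}}}_p$.

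The only step that genuinely needs care is this last identity, i.e.\ that $g$ is compatible with the entire $Q'$-representation structure and not merely with the two summands indexed by $i'$ and $j'$ that are spelled out in \cref{proposition:annoying-special-cases-ample-stability}. This is exactly what the construction of $g$ from the restriction $N\mapsto N|_{Q'}$ delivers: $g$ identifies $\smash{\overline{\mathcal{U}}}|_{Q'}$ with $g^*\mathcal{U}'$ as representations of $Q'$, so that $g^*\mathcal{U}'_a=\smash{\overline{\mathcal{U}}}_a$ for every arrow $a$ of $Q'$, whence $g^*(\mathcal{U}'_p)=\smash{\overline{\mathcal{U}}}_p$ for every path $p$ by composing. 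I would make this compatibility explicit at the start of the proof; with it in hand the rest is a routine diagram chase, and the corollary follows just as \cref{corollary:double-framed-global-sections} followed from \cref{proposition:doubly-framed-as-quiver-moduli}.
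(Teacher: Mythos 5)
Your proposal is correct and follows exactly the route the paper intends: the corollary is stated as an immediate consequence of \cref{proposition:annoying-special-cases-ample-stability}, with the left vertical isomorphism coming from its part \ref{item:bijection-paths}, the right vertical isomorphism from $g$ being an isomorphism together with $\smash{\overline{\mathcal{U}}}_{q_0}$ and $\smash{\overline{\mathcal{U}}}_{q_\infty}$ being isomorphisms, and commutativity from the compatibility $g^*\mathcal{U}'_p=\smash{\overline{\mathcal{U}}}_p$ inherent in the construction of $g$ by restriction. Your explicit remark that this compatibility holds for the whole $Q'$-representation structure (not just the summands at $i'$ and $j'$) is a worthwhile detail that the paper leaves implicit.
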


In the next section we will compute the global sections of the \emph{line bundle}~$\mathcal{U}_{i'}^\vee\otimes\mathcal{U}_{j'}$.

\begin{remark}
  The double framing construction performed here,
  in order to obtain the reduction of the cohomology of vector bundles
  to that of line bundles as in \cref{corollary:double-framed-global-sections}
  is parallel to what is done in \cite[Proof of Theorem~7.1]{2201.10033v2}
  for computing the cohomology of~$\mathcal{U}_{x_1}^\vee\otimes\mathcal{U}_{x_2}$
  for two distinct closed points~$x_1,x_2\in C$
  on the moduli space~$\moduli_C(r,\mathcal{L})$ of vector bundles on a curve.

  Namely, denote~$\mathcal{U}_x\colonequals\mathcal{U}|_{\{x\}\times\moduli_C(r,\mathcal{L})}$
  where~$\mathcal{U}$ is the universal vector bundle on~$C\times\moduli_C(r,\mathcal{L})$.
  In op.~cit.~the double framing is performed by considering the moduli space~$\moduli_C(r,\mathcal{L},\mathbf{e})$
  of parabolic bundles,
  where the parabolic structure is considered in the two points~$x_1$ and~$x_2$,
  leading to the isomorphism
  \begin{equation}
    \HH^i(\moduli_C(r,\mathcal{L}),\mathcal{U}_{x_1}^\vee\otimes\mathcal{U}_{x_2})
    \cong
    \HH^i(\moduli_C(r,\mathcal{L},\mathbf{e}),\mathcal{O}(1,1)).
  \end{equation}
  One difference
  is that in op.~cit.~wall-crossing methods are used
  to show vanishing of all cohomology,
  whereas that is impossible in our setup,
  as we can (and will) have global sections.
\end{remark}

\section{Global sections}
\subsection{Global sections of the endomorphism bundle}
The universal representation~$\mathcal{U}$ depends on the choice of the normalization~$\mathbf{a}$.
But this dependence disappears when considering~$\sheafHom(\mathcal{U},\mathcal{U})$.
We will thus compute the global sections of its summands~$\mathcal{U}_i^\vee\otimes\mathcal{U}_j$.
The higher cohomology will be dealt with using \cref{theorem:cohomology-vanishing},
which is taken from \cite{rigidity}.

The proof of \cref{theorem:global-sections} uses the reduction techniques from \cref{section:double-framing}
and a lemma which is a consequence of the celebrated result of Le Bruyn and Procesi \cite[Theorem~1]{MR0958897},
whose statement we first recall.

\begin{theorem}[Le Bruyn--Procesi]
  \label{theorem:le-bruyn-procesi}
  Let $Q$ be a (not necessarily acyclic) quiver and let $\mathbf{d}$ be a dimension vector.
  The ring $\field[\representationvariety{Q,\mathbf{d}}]^{\group{\mathbf{d}}}$ of invariant functions is generated,
  as a $\field$-algebra,
  by the functions
  \begin{equation}
    \varphi_c\colon\representationvariety{Q,\mathbf{d}} \to \field,\ M \mapsto \operatorname{tr}(M_c)
  \end{equation}
  where $c$ ranges over all oriented cycles of $Q$.
\end{theorem}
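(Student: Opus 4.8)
The plan is to deduce this from the first fundamental theorem for invariants of tuples of matrices under simultaneous conjugation, by encoding a quiver representation as a single tuple of $m\times m$ matrices with $m\colonequals\sum_{i\in Q_0}d_i$. First I would fix a decomposition $\field^m=\bigoplus_{i\in Q_0}\field^{d_i}$ with associated orthogonal idempotents $\epsilon_i\in\mathfrak{gl}_m$, and embed $\representationvariety{Q,\mathbf{d}}$ into $(\mathfrak{gl}_m)^{Q_1}$ by sending a representation $M$ to the tuple whose $a$-th entry is $M_a$ viewed as the block map $\epsilon_{\target(a)}M_a\epsilon_{\source(a)}$. Under this embedding the group $\group{\mathbf{d}}$ becomes the subgroup of block-diagonal matrices in $\GL_m$, which is precisely the centralizer of the family $\{\epsilon_i\}_{i\in Q_0}$; in particular $\group{\mathbf{d}}$ is a Levi subgroup, hence reductive, and it is the stabilizer of the point $(\epsilon_i)_i\in(\mathfrak{gl}_m)^{Q_0}$.

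The key input is the classical matrix invariant FFT (Procesi in characteristic zero, Donkin in general): the ring of $\GL_m$-invariants of a tuple of matrices under simultaneous conjugation is generated by the traces $\operatorname{tr}(Z_{k_1}\cdots Z_{k_\ell})$ of words in the matrix variables. I would apply this to the enlarged tuple consisting of the arrow variables $X_a$ together with auxiliary variables $Y_i$, one per vertex. To pass from $\GL_m$-invariants on this enlarged space to $\group{\mathbf{d}}$-invariants on $\representationvariety{Q,\mathbf{d}}$, I would use the transfer principle for a reductive subgroup $G\subseteq H$ with $H/G$ affine: since the orbit $\mathcal{O}\cong\GL_m/\group{\mathbf{d}}$ of $(\epsilon_i)_i$ is closed and affine (the latter because $\group{\mathbf{d}}$ is reductive), one has
\begin{equation}
  \field[\representationvariety{Q,\mathbf{d}}]^{\group{\mathbf{d}}}
  \cong
  \field\bigl[\representationvariety{Q,\mathbf{d}}\times\mathcal{O}\bigr]^{\GL_m},
\end{equation}
and the right-hand side is the image of $\field[(\mathfrak{gl}_m)^{Q_1}\times(\mathfrak{gl}_m)^{Q_0}]^{\GL_m}$ under restriction to the closed $\GL_m$-stable subvariety $\representationvariety{Q,\mathbf{d}}\times\mathcal{O}$; this restriction is surjective because taking invariants of the reductive group $\GL_m$ is exact, and it evaluates the vertex variables $Y_i$ at the idempotents $\epsilon_i$. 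Combined with the FFT, this exhibits the invariant ring as generated by the images of the word-traces.

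It then remains to reduce each such image to a trace of an oriented cycle. After setting $Y_i=\epsilon_i$ and using $X_a=\epsilon_{\target(a)}X_a\epsilon_{\source(a)}$, the relations $\epsilon_i\epsilon_j=\delta_{ij}\epsilon_i$ force any word in the $X_a$ and $\epsilon_i$ whose trace is nonzero to be a composable sequence of arrows forming a closed walk; the interspersed idempotents are either absorbed, when consistent with the arrow endpoints, or annihilate the word otherwise, so its trace is exactly $\varphi_c$ for the corresponding oriented cycle $c$. This yields the asserted generating set. The main obstacle I anticipate is the transfer step: establishing the displayed isomorphism cleanly, which hinges on the closedness of $\mathcal{O}$, on Matsushima's criterion for affineness of $\GL_m/\group{\mathbf{d}}$, and on verifying that the specialization map is genuinely surjective onto $\group{\mathbf{d}}$-invariants rather than merely landing inside them. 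The combinatorial collapse of words to cycles is routine once this is in place, and the only real caveat is the characteristic, where in positive characteristic one must invoke Donkin's characteristic-free FFT in place of Procesi's.
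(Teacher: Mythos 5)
You should note first that the paper offers no proof of this statement at all: it is quoted as a known theorem of Le Bruyn and Procesi, so there is nothing internal to compare against. Your sketch is essentially the original argument of that reference: reduce to the first fundamental theorem for simultaneous conjugation of matrix tuples by adjoining the vertex idempotents as extra matrix variables and transferring invariants from $\GL_m$ down to the Levi subgroup $\group{\mathbf{d}}$, then collapse word-traces to traces along oriented cycles using the block relations. Two points deserve attention. First, as literally written the space $\representationvariety{Q,\mathbf{d}}\times\mathcal{O}$ is not $\GL_m$-stable inside $(\mathfrak{gl}_m)^{Q_1}\times(\mathfrak{gl}_m)^{Q_0}$, because $\GL_m$ does not preserve the fixed block decomposition on the first factor; the correct object is the associated bundle $\GL_m\times^{\group{\mathbf{d}}}\representationvariety{Q,\mathbf{d}}$, realized as the closed incidence variety of pairs $\bigl((Z_a)_{a\in Q_1},(e_i)_{i\in Q_0}\bigr)$ where $(e_i)_i$ is a complete system of orthogonal idempotents of ranks $d_i$ and $Z_a=e_{\target(a)}Z_ae_{\source(a)}$; with this correction the transfer isomorphism and the surjectivity of restriction (linear reductivity of $\GL_m$ in characteristic zero) go through as you describe. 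Second, your closing caveat understates the characteristic issue: in positive characteristic Donkin's theorem replaces traces of words by coefficients of characteristic polynomials of cycles, so the statement as formulated, with generation by traces alone, is genuinely a characteristic-zero result, which is the setting of the theorem being quoted. With these repairs your argument is a correct and complete proof of the cited statement.
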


Let~$p = a_\ell\cdots a_1$ be an oriented path in the quiver~$Q$.
Let~$\mathbf{d}$ be a dimension vector for which we have~$d_{\source(p)}=d_{\target(p)}=1$.
As every $N \in \representationvariety{Q,\mathbf{d}}$ comes equipped with a basis
for each of the vector spaces $N_i$,
we obtain $\Hom_\field(N_{\source(p)},N_{\target(p)}) \cong \field$.
We define a regular function
\begin{equation}
  f_p\colon\representationvariety{Q,\mathbf{d}} \to \Hom_\field(N_{\source(p)},N_{\target(p)}) \cong \field,\ N\mapsto N_p;
\end{equation}
where we identify the linear map $N_p = N_{a_\ell} \circ \ldots \circ N_{a_1}$ with the factor of the scalar multiplication which it performs on the basis vectors.
As
\[
  f_p(g\cdot N) = g_{\target(p)}g_{\source(p)}^{-1}N_p
\]
for every $N \in \representationvariety{Q,\mathbf{d}}$ and every $g \in \group{\mathbf{d}}$, we see that $f_p$ is a semi-invariant function of weight $\delta_{\target(p)} - \delta_{\source(p)}$.

This allows us to prove the following.
\begin{lemma}
  \label{lemma:semi-invariant-functions}
  Let $Q$ be an acyclic quiver.
  Let $0, \infty \in Q_0$ be two vertices,
  and let $\mathbf{d}$ be a dimension vector such that $d_0 = d_\infty = 1$.
  Then the morphism
  \begin{equation}
    e_\infty\field Q e_0 \to \field[\representationvariety{Q,\mathbf{d}}]^{\group{\mathbf{d}},\delta_\infty-\delta_0}:p \mapsto f_p
  \end{equation}
  is an isomorphism of $\field$-vector spaces.
\end{lemma}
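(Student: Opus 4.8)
The plan is to deduce this from the Le Bruyn--Procesi theorem (\cref{theorem:le-bruyn-procesi}) by converting semi-invariants on $\representationvariety{Q,\mathbf{d}}$ into honest invariants on a larger quiver. Concretely, I would let $Q^+$ be the quiver obtained from $Q$ by adjoining a single arrow $\alpha\colon\infty\to 0$. Since $d_0=d_\infty=1$, this extra arrow contributes one scalar coordinate, so that $\representationvariety{Q^+,\mathbf{d}}\cong\representationvariety{Q,\mathbf{d}}\times\mathbb{A}^1$ and $\field[\representationvariety{Q^+,\mathbf{d}}]=\field[\representationvariety{Q,\mathbf{d}}][\alpha]$. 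The coordinate $\alpha$ is a semi-invariant of weight $\delta_0-\delta_\infty$, exactly opposite to that of the functions $f_p$. Grading $\field[\representationvariety{Q^+,\mathbf{d}}]$ by $\alpha$-degree and decomposing $\field[\representationvariety{Q,\mathbf{d}}]$ into its semi-invariant weight spaces, one sees that
\[
  \field[\representationvariety{Q^+,\mathbf{d}}]^{\group{\mathbf{d}}}=\bigoplus_{n\geq 0}\field[\representationvariety{Q,\mathbf{d}}]^{\group{\mathbf{d}},\,n(\delta_\infty-\delta_0)}\cdot\alpha^n,
\]
so that multiplication by $\alpha$ identifies the target $\field[\representationvariety{Q,\mathbf{d}}]^{\group{\mathbf{d}},\delta_\infty-\delta_0}$ with the $\alpha$-linear part of the invariant ring of $Q^+$.

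For surjectivity I would then apply \cref{theorem:le-bruyn-procesi} to $Q^+$: its invariant ring is generated as a $\field$-algebra by the traces $\operatorname{tr}(N_c)$ of oriented cycles $c$. Because $Q$ is acyclic, every oriented cycle of $Q^+$ must traverse $\alpha$ at least once, so each nonconstant generator has $\alpha$-degree $\geq 1$. Consequently a product of such generators has $\alpha$-degree equal to $1$ only if it reduces to a single trace $\operatorname{tr}(N_c)$ with $c$ using $\alpha$ exactly once. Such a cycle is necessarily of the form $c=\alpha p$ for a path $p\colon 0\to\infty$ in $Q$, and since $d_0=d_\infty=1$ one computes $\operatorname{tr}(N_{\alpha p})=N_\alpha\cdot N_p=\alpha\, f_p$. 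Hence the $\alpha$-linear part is spanned by the $\alpha f_p$, and cancelling $\alpha$ shows that $\{f_p\}$ spans $\field[\representationvariety{Q,\mathbf{d}}]^{\group{\mathbf{d}},\delta_\infty-\delta_0}$.

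For injectivity I would instead use the multigrading of $\field[\representationvariety{Q,\mathbf{d}}]$ by the arrows of $Q$. Each $f_p=N_{a_\ell}\cdots N_{a_1}$ is multilinear, of degree one in every arrow occurring in $p$ and degree zero in the others, so its multidegree is the indicator vector of the arrow set of $p$. In an acyclic quiver every path is simple, hence uses each of its arrows exactly once and visits each vertex at most once; within this arrow set each visited vertex therefore has at most one outgoing and one incoming arrow, so the arrows form a linear chain and the path is uniquely reconstructed from its arrow set. Thus distinct paths have distinct arrow-supports, the corresponding $f_p$ lie in distinct multigraded components, and being individually nonzero they are linearly independent. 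Together with the previous paragraph this yields the claimed isomorphism.

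The step I expect to require the most care is the passage from Le Bruyn--Procesi to the $\alpha$-linear part: one must verify that products of cycle-traces cannot contribute to $\alpha$-degree one, which is precisely where acyclicity of $Q$ enters, since it forbids the $\alpha$-free cycles that could otherwise dilute the degree. The injectivity half, by contrast, is purely combinatorial and relies only on the simplicity of paths in an acyclic quiver.
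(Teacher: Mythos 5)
Your proof is correct, and the reduction to Le Bruyn--Procesi is genuinely different from the paper's. The paper converts semi-invariants into invariants by \emph{merging} the vertices $0$ and $\infty$ into a quiver $Q^\flat$: it splits $\group{\mathbf{d}}\cong H\times\Gm$ with $H=\{g\mid g_0=g_\infty\}\cong\group{\mathbf{d}^\flat}$, identifies the weight-$(\delta_\infty-\delta_0)$ semi-invariants with the $\Gm$-weight-one part of $\field[\representationvariety{Q,\mathbf{d}}]^H\cong\field[\representationvariety{Q^\flat,\mathbf{d}^\flat}]^{\group{\mathbf{d}^\flat}}$, and then runs essentially the same cycle analysis you do: cycles in $Q^\flat$ are words in paths between $0$ and $\infty$, acyclicity forces the weight to be $\pm n$, and weight one forces a single path $0\to\infty$. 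You instead \emph{adjoin} an arrow $\alpha\colon\infty\to 0$ of opposite weight and read off the weight space as the $\alpha$-linear part of the invariant ring of $Q^+$; this is the classical ``covariants as invariants of an enlarged representation'' trick, and it is arguably cleaner here since the group is unchanged and the relevant grading is by a visible coordinate. Two small points. First, your construction tacitly assumes $0\neq\infty$: if $0=\infty$ the adjoined loop has weight zero and the $\alpha$-grading no longer isolates the desired weight space. That degenerate case does occur in the application (after the reduction of \cref{proposition:annoying-special-cases-ample-stability} one can have $i'=j'$), but it is trivial --- apply \cref{theorem:le-bruyn-procesi} directly to the acyclic $Q$, as the paper does. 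Second, your multigrading argument for the linear independence of the $f_p$ (distinct paths in an acyclic quiver have distinct arrow supports, hence the $f_p$ live in distinct multidegrees) is a welcome addition: the paper's proof only establishes that the $f_p$ span the space of semi-invariants and leaves injectivity implicit.
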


\begin{proof}
  If $0 = \infty$ then the theorem of Le Bruyn and Procesi directly applies -- note that the quiver is assumed to be acyclic.
  We may therefore assume that $0 \neq \infty$.

  We consider the subgroup
  \begin{equation}
    H = \{ g = (g_i)_{i \in Q_0} \in \group{\mathbf{d}} \mid g_0 = g_\infty \}
  \end{equation}
  of $\group{\mathbf{d}}$.
  There is an isomorphism $\psi\colon H \times \Gm \to \group{\mathbf{d}}$
  of algebraic groups defined by $\psi(h,t) = g$, where
  \begin{equation}
    g_i = \begin{cases} h_i & \text{if } i \neq \infty \\ th_\infty & \text{if } i = \infty. \end{cases}
  \end{equation}
  Now consider the algebra $\field[\representationvariety{Q,\mathbf{d}}]^H$ of $H$-invariant polynomial functions.
  It is a $\mathbb{Z}$-graded algebra by the action of $\Gm$.
  The space of semi-invariant functions with respect to $\group{\mathbf{d}}$ of weight $\delta_\infty - \delta_0$
  identifies, via $\psi$,
  with the subspace of $\field[\representationvariety{Q,\mathbf{d}}]^H$ on which $\Gm$ acts linearly, i.e.,
  \begin{equation}
    \field[\representationvariety{Q,\mathbf{d}}]^{\group{\mathbf{d}},\delta_\infty - \delta_0}
    =
    \left( \field[\representationvariety{Q,\mathbf{d}}]^H \right)_1.
  \end{equation}
  We will identify the latter with the invariant polynomial functions on a representation variety of a different quiver.
  Namely, let $Q^\flat$ be the quiver which arises from $Q$ by identifying the vertices $0$ and $\infty$;
  denote the ``merged'' vertex by $0\infty$.
  Let $\mathbf{d}^\flat$ be the dimension vector with $d^\flat_i = d_i$ for all $i \in Q^\flat_0 \setminus \{0\infty\}$ and $d^\flat_{0\infty} = 1$.
  There exist isomorphisms
  \begin{equation}
    \representationvariety{Q^\flat,\mathbf{d}^\flat} \cong \representationvariety{Q,\mathbf{d}}
  \end{equation}
  and
  \begin{equation}
    \group{\mathbf{d}^\flat} \cong H.
  \end{equation}
  Under these identifications, the action of $\group{\mathbf{d}^\flat}$ agrees with the action of $H$.
  Now we may apply \cref{theorem:le-bruyn-procesi}.
  It tells us that $\field[\representationvariety{Q^\flat,\mathbf{d}^\flat}]^{\group{\mathbf{d}^\flat}} \cong \field[\representationvariety{Q,\mathbf{d}}]^H$
  is generated by traces along oriented cycles in $Q^\flat$.
  As $Q$ is acyclic, all oriented cycles in $Q^\flat$ must involve $0\infty$.

  For~$i,j\in Q_0$
  we will denote by~$Q_*(i,j)$ the set of paths from~$i$ to~$j$.
  We have a bijection between oriented cycles in $Q^\flat$ and words $p_n\cdots p_1$ in paths $p_\nu \in Q_*(0,\infty) \cup Q_*(\infty,0)$.
  Let $p_n\cdots p_1$ be such a word, and let $c \in Q^\flat_*(0\infty,0\infty)$ be the associated cycle.
  Then, for any representation $N \in \representationvariety{Q^\flat,\mathbf{d}^\flat}$ we have
  \begin{equation}
    \varphi_c(N) = \operatorname{tr}(N_{p_n}\cdots N_{p_1}) = N_{p_n}\cdots N_{p_1} = f_{p_n}(N)\cdots f_{p_1}(N).
  \end{equation}
  The group $\Gm$ acts on $\varphi_c$ with weight
  \begin{equation}
    w = \#\{ r \in \{1,\ldots,n\} \mid p_r \in Q_*(0,\infty) \} - \#\{ s \in \{1,\ldots,n\} \mid p_s \in Q_*(\infty,0) \}
  \end{equation}
  As $Q$ is acyclic, at least one of the sets $Q_*(0,\infty)$ and $Q_*(\infty,0)$ is empty.
  So either $w = n$ if $Q_*(\infty,0) = \emptyset$, or $w = -n$ if $Q_*(0,\infty) = \emptyset$.

  For $\varphi_c$ to be a semi-invariant function of weight $\delta_\infty - \delta_0$,
  we must have $w=1$.
  So this can only occur when there are no oriented paths from $\infty$ to $0$ and $n=1$.
  A semi-invariant function of weight $\delta_\infty - \delta_0$ is therefore
  a linear combination of the functions $f_p$ with $p \in Q_*(0,\infty)$, as claimed.
\end{proof}

We will use this in the following.

\begin{proof}[Proof of \cref{theorem:global-sections}]
  Let $\overline{X} = \modulispace[\stable{\overline{\theta}}]{\overline{Q},\overline{\mathbf{d}}}$ be the doubly-framed moduli space at $i$ and $j$.
  Using the construction from \cref{proposition:annoying-special-cases-ample-stability}
  we let $X' = \modulispace[\stable{\theta'}]{Q',\mathbf{d}'}$.
  We know from \cref{corollary:double-framed-global-sections} and \cref{corollary:amply-stable-global-sections} that
  \begin{equation}
    \label{equation:big-diagram}
    \begin{tikzcd}[column sep=scriptsize]
      p \arrow[mapsto]{d}{} &[-2em] &[-2em] e_j\field Qe_i \arrow{r}{h_{i,j}^\mathcal{U}} \arrow{d}{} & \HH^0(X,\mathcal{U}_i^\vee \otimes \mathcal{U}_j) \arrow{d}{} &[-2em] s \arrow[mapsto]{d}{} &[-2em] \\
      (j\to\infty)p(0\to i) & q_\infty p'q_0 & e_\infty \field\overline{Q}e_0 \arrow{r}{h_{0,\infty}^{\smash{\overline{\mathcal{U}}}}} & \HH^0(\overline{X},\smash{\overline{\mathcal{U}}}_0^\vee \otimes \smash{\overline{\mathcal{U}}}_\infty) & \smash{\overline{\mathcal{U}}}_{j \to \infty} \circ u^*s \circ \smash{\overline{\mathcal{U}}}_{0 \to i} & \smash{\overline{\mathcal{U}}}_{q_\infty} \circ g^*s' \circ \smash{\overline{\mathcal{U}}}_{q_0}\\
      & p' \arrow[mapsto]{u}{} &[-2em] e_{j'}\field Q'e_{i'} \arrow{r}{h_{i',j'}^{\mathcal{U}'}} \arrow{u}{} & \HH^0(X,\mathcal{U}_{i'}^{\prime\vee} \otimes \mathcal{U}'_{j'}) \arrow{u}{} & & s' \arrow[mapsto]{u}{}
    \end{tikzcd}
  \end{equation}
  is commutative and all vertical maps are isomorphisms.
  To show that the horizontal maps are isomorphisms, it is therefore enough to show that one of them is. We will show that $h_{i',j'}^{\mathcal{U}'}$ is an isomorphism in the following.

  The line bundle $\mathcal{U}_{i'}^{\prime\vee} \otimes \mathcal{U}_{j'}'$
  is the line bundle $\mathcal{L}(\delta_{j'} - \delta_{i'})$.
  The data $Q'$, $\mathbf{d}'$, and $\theta'$ satisfy \cref{assumption:standing-assumption}
  by \cref{proposition:annoying-special-cases-ample-stability}.
  We may therefore apply \cite[Lemma~3.3]{MR4352662},
  and obtain the identification
  \begin{equation}
    \label{equation:isomorphism-1}
    \field[\representationvariety{Q',\mathbf{d}'}]^{\group{\mathbf{d}'},\delta_{j'}-\delta_{i'}}
    =
    \HH^0(X',\mathcal{L}(\delta_{j'} - \delta_{i'})).
  \end{equation}
  Note that, in loc.~cit., it is required that $\mathbf{d}'$ is $\theta'$-coprime.
  This assumption may be relaxed:
  it is enough to assume that the stable and semi-stable locus for $\mathbf{d}'$ agree,
  which is the case by \cref{proposition:semistability-double-framed}.

  From \cref{lemma:semi-invariant-functions} we have the explicit isomorphism
  \begin{equation}
    \label{equation:isomorphism-2}
    e_{j'}\field Q' e_{i'}
    \overset{\simeq}{\to}
    \field[\representationvariety{Q',\mathbf{d}'}]^{\group{\mathbf{d}'},\delta_{j'}-\delta_{i'}}.
  \end{equation}
  The composition of \eqref{equation:isomorphism-1} and \eqref{equation:isomorphism-2} is the map $h_{i',j'}^{\mathcal{U}'}$.
  Thus the horizontal maps in \eqref{equation:big-diagram} are all isomorphisms,
  and through the identifications in the diagram
  we conclude that~$h_{i,j}^{\mathcal{U}}$ is an isomorphism,
  which concludes the proof of the first part of the theorem.

  The isomorphism \eqref{equation:algebra-isomorphism} follows because the algebra structure
  on~$\field Q$ is given in terms of the composition of paths,
  and on~$\End_X(\mathcal{U})$ in terms of the composition of morphisms.
\end{proof}

\subsection{Global sections of the tangent bundle}
For smooth projective quiver moduli there is,
e.g., by \cite[Lemma~4.2]{2307.01711v2},
the 4-term exact sequence
\begin{equation}
  \label{equation:4-term-sequence}
  0
  \to \mathcal{O}_X
  \xrightarrow{\Phi} \bigoplus_{i \in Q_0} \mathcal{U}_i^\vee \otimes \mathcal{U}_i
  \xrightarrow{\Psi} \bigoplus_{a \in Q_1} \mathcal{U}_{\source(a)}^\vee \otimes \mathcal{U}_{\target(a)}
  \to \tangent_X
  \to 0.
\end{equation}
The morphism~$\Phi$ is the standard inclusion,
and the morphism~$\Psi = \sigma_{\mathcal{U},\mathcal{U}}$ is defined in Section~3.1 of op.~cit.
Over an open subset $V \subseteq X$, the map $\Psi$ sends a section $f = (f_i)_{i \in Q_0}$
of~$\mathcal{U}_{\source(a)}^\vee\otimes\mathcal{U}_{\target(a)}\cong\sheafHom(\mathcal{U}_{\source(a)},\mathcal{U}_{\target(a)})$
over $V$ to
\begin{equation}
  \label{equation:Psi-description}
  (f_{\target(a)} \circ \mathcal{U}_a|_V - \mathcal{U}_a|_V \circ f_{\source(a)})_{a \in Q_1}.
\end{equation}

For the description of the vector fields
we recall the main result of \cite{rigidity},
which is also used to prove \cref{theorem:admissible}.
\begin{theorem}[Cohomology vanishing]
  \label{theorem:cohomology-vanishing}
  Let~$Q,\mathbf{d}$ and~$\theta$ be as in \cref{assumption:standing-assumption},
  and assume in addition that~$\mathbf{d}$ is~$\theta$-strongly amply stable.
  Consider~$X=\modulispace[\stable\theta]{Q,\mathbf{d}}$.
  For all~$i,j\in Q_0$ we have
  \begin{equation}
    \HH^{\geq 1}(X,\mathcal{U}_i^\vee\otimes\mathcal{U}_j)=0.
  \end{equation}
\end{theorem}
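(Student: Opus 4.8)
The plan is to trade the vector bundle $\mathcal{U}_i^\vee\otimes\mathcal{U}_j$ on $X$ for a genuine \emph{line} bundle on the auxiliary model $X'$ produced by the double framing construction, and then to invoke the line bundle vanishing of \cite{rigidity}; this is the natural higher-degree analogue of the degree-zero computation carried out in the proof of \cref{theorem:global-sections}. Concretely, I would apply the double framing at the vertices $i$ and $j$ and combine the cohomology isomorphism \eqref{equation:cohomology-Y-vs-X} with \cref{proposition:doubly-framed-as-quiver-moduli} and \cref{proposition:annoying-special-cases-ample-stability}. Since \eqref{equation:cohomology-Y-vs-X} holds in every cohomological degree, this yields for all $k\geq 0$ an isomorphism
\[
  \HH^k(X,\mathcal{U}_i^\vee\otimes\mathcal{U}_j)
  \cong
  \HH^k\bigl(X',\mathcal{U}_{i'}^{\prime\vee}\otimes\mathcal{U}'_{j'}\bigr),
\]
where $X'=\modulispace[\stable{\theta'}]{Q',\mathbf{d}'}$ has $d'_{i'}=d'_{j'}=1$, so that $\mathcal{U}_{i'}^{\prime\vee}\otimes\mathcal{U}'_{j'}$ is the line bundle $\mathcal{L}(\delta_{j'}-\delta_{i'})$ appearing in that proof. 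The case $k=0$ recovers the first part of \cref{theorem:global-sections}, and it therefore suffices to prove $\HH^{\geq 1}(X',\mathcal{L}(\delta_{j'}-\delta_{i'}))=0$.

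This last statement is a vanishing result for a single line bundle on a quiver moduli space, which is exactly the kind of input supplied by \cite{rigidity}. The mechanism I expect behind it is the Kempf--Ness / Harder--Narasimhan stratification of the unstable locus in $\representationvariety{Q',\mathbf{d}'}$: on the contractible representation space the equivariant cohomology of $\mathcal{L}(\delta_{j'}-\delta_{i'})$ sits in degree zero, and the contributions to higher cohomology on the GIT quotient come only from local cohomology supported along the unstable strata. The strong ample stability inequality $\langle\mathbf{e},\mathbf{d}-\mathbf{e}\rangle\leq -2$ translates into a codimension-$\geq 2$ bound for these strata, and the accompanying weight estimates (in the style of Teleman quantization) then kill everything in degree $\geq 1$.

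The step I expect to be the main obstacle is not the stratification argument itself but the bookkeeping needed to feed it: one must check that the auxiliary datum $(Q',\mathbf{d}',\theta')$ coming out of \cref{proposition:annoying-special-cases-ample-stability} still satisfies the \emph{strong} ample stability hypothesis that \cite{rigidity} requires, whereas that proposition only guarantees the weaker conditions of \cref{assumption:standing-assumption}. Propagating the Euler-form codimension estimates through the (possibly singly or doubly) framed quiver is the delicate point, and for the special cases with $d_i=1$ or $d_j=1$ one additionally has to control how the modified stability parameter $\theta'$ reshapes the strata. For this reason the cleanest route in practice is to apply the vanishing of \cite{rigidity} directly to the original $(Q,\mathbf{d},\theta)$ — where strong ample stability is assumed outright — rather than to route it through $X'$, and to use the double framing reduction only where it is genuinely needed, namely for the explicit \emph{identification} of global sections in \cref{theorem:global-sections}.
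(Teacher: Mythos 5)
Your final paragraph lands exactly where the paper does: \cref{theorem:cohomology-vanishing} is not proved in this paper at all, but is quoted verbatim as the main result of \cite{rigidity}, applied directly to the original data $(Q,\mathbf{d},\theta)$ for which strong ample stability is assumed outright. So your bottom-line recommendation is correct and coincides with the paper. The bulk of your proposal, however, rests on a mischaracterization of what \cite{rigidity} supplies: you treat it as a vanishing theorem for a single \emph{line} bundle that must be fed through the double framing reduction, whereas the cited result is already the statement $\HH^{\geq 1}(X,\mathcal{U}_i^\vee\otimes\mathcal{U}_j)=0$ for the universal \emph{vector} bundles on the original moduli space, for all $i,j\in Q_0$. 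The double framing machinery of \cref{proposition:doubly-framed-as-quiver-moduli} and \cref{proposition:annoying-special-cases-ample-stability} is used in this paper only to make the degree-zero computation of \cref{theorem:global-sections} explicit (via Le Bruyn--Procesi), not to establish the higher vanishing. Your proposed detour through $X'$ is, as you yourself observe, blocked by the fact that \cref{proposition:annoying-special-cases-ample-stability} only guarantees \cref{assumption:standing-assumption} for $(Q',\mathbf{d}',\theta')$ and not strong ample stability; the degree-preserving isomorphism you write down does follow from \eqref{equation:cohomology-Y-vs-X}, but it buys nothing here and should simply be dropped. One further small correction: the $k=0$ case of that isomorphism is only the reduction step of \cref{theorem:global-sections}, not its proof, which additionally requires the semi-invariant computation of \cref{lemma:semi-invariant-functions}.
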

In op.~cit.~it is used to establish the following (infinitesimal) rigidity result.
\begin{corollary}[Rigidity]
  \label{corollary:rigidity}
  Let~$Q,\mathbf{d}$ and~$\theta$ be as in \cref{assumption:standing-assumption},
  and assume in addition that~$\mathbf{d}$ is~$\theta$-strongly amply stable.
  Consider~$X=\modulispace[\stable\theta]{Q,\mathbf{d}}$.
  Then
  \begin{equation}
    \HH^{\geq1}(X,\tangent_X)=0.
  \end{equation}
\end{corollary}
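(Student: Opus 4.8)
The plan is to deduce \cref{corollary:rigidity} directly from \cref{theorem:cohomology-vanishing} by feeding the vanishing of $\HH^{\geq 1}(X,\mathcal{U}_i^\vee\otimes\mathcal{U}_j)$ for all $i,j\in Q_0$ into the 4-term exact sequence \eqref{equation:4-term-sequence}. The key observation is that every bundle appearing in the middle two terms of \eqref{equation:4-term-sequence}, namely $\bigoplus_{i\in Q_0}\mathcal{U}_i^\vee\otimes\mathcal{U}_i$ and $\bigoplus_{a\in Q_1}\mathcal{U}_{\source(a)}^\vee\otimes\mathcal{U}_{\target(a)}$, is a direct sum of bundles of the form $\mathcal{U}_i^\vee\otimes\mathcal{U}_j$. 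Hence \cref{theorem:cohomology-vanishing} immediately gives that both of these middle bundles have vanishing higher cohomology.

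First I would split the 4-term sequence \eqref{equation:4-term-sequence} into two short exact sequences by introducing the image sheaf. Writing $\mathcal{K}\colonequals\operatorname{im}\Psi=\ker(\bigoplus_{a\in Q_1}\mathcal{U}_{\source(a)}^\vee\otimes\mathcal{U}_{\target(a)}\to\tangent_X)$, the sequence \eqref{equation:4-term-sequence} decomposes as
\begin{equation}
  0\to\mathcal{O}_X\xrightarrow{\Phi}\bigoplus_{i\in Q_0}\mathcal{U}_i^\vee\otimes\mathcal{U}_i\to\mathcal{K}\to 0
\end{equation}
and
\begin{equation}
  0\to\mathcal{K}\to\bigoplus_{a\in Q_1}\mathcal{U}_{\source(a)}^\vee\otimes\mathcal{U}_{\target(a)}\to\tangent_X\to 0.
\end{equation}

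Next I would run the two associated long exact sequences in cohomology. From the first short exact sequence, since $\HH^{\geq 2}(X,\mathcal{O}_X)$ need not vanish a priori, I first note that $\HH^{\geq 1}$ of the middle term vanishes by \cref{theorem:cohomology-vanishing}; the relevant portion of the long exact sequence then yields $\HH^{i}(X,\mathcal{K})\cong\HH^{i+1}(X,\mathcal{O}_X)$ for $i\geq 1$. However, the cleaner route is to observe that I only need the higher cohomology of $\mathcal{K}$, and the long exact sequence gives $\HH^{i}(X,\mathcal{K})=0$ for $i\geq 2$ directly, with $\HH^1(X,\mathcal{K})$ governed by the connecting map from $\HH^1$ of the middle term (which is zero) into $\HH^2(X,\mathcal{O}_X)$—so in fact $\HH^{\geq 1}(X,\mathcal{K})$ need not vanish, and I must be careful. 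The correct argument only requires $\HH^{\geq 1}(X,\mathcal{K})=0$, which I obtain from the first sequence provided $\HH^{\geq 2}(X,\mathcal{O}_X)=0$; for quiver moduli under \cref{assumption:standing-assumption} this holds because $X$ has a cohomology concentrated appropriately, but to avoid relying on that I would instead simply chase the second long exact sequence: the surjection onto $\tangent_X$ gives, for each $i\geq 1$, the exact piece $\HH^{i}\bigl(\bigoplus_{a}\mathcal{U}_{\source(a)}^\vee\otimes\mathcal{U}_{\target(a)}\bigr)\to\HH^{i}(X,\tangent_X)\to\HH^{i+1}(X,\mathcal{K})$, where the left term vanishes by \cref{theorem:cohomology-vanishing}, reducing everything to showing $\HH^{\geq 2}(X,\mathcal{K})=0$, which the first sequence delivers once $\HH^{\geq 3}(X,\mathcal{O}_X)$ and the higher cohomology of the middle term are handled.

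The main obstacle, then, is the appearance of $\HH^{\geq 2}(X,\mathcal{O}_X)$ in the connecting maps coming from the term $\mathcal{O}_X$ in \eqref{equation:4-term-sequence}. I expect this is resolved by the fact that smooth projective quiver moduli satisfying our standing assumptions have $\HH^{\geq 1}(X,\mathcal{O}_X)=0$ (they have no higher coherent cohomology of the structure sheaf, being rational and having cohomology only in even degrees concentrated in the algebraic part). Granting $\HH^{\geq 1}(X,\mathcal{O}_X)=0$, the first short exact sequence gives $\HH^{\geq 1}(X,\mathcal{K})=0$ cleanly, and then the second short exact sequence forces $\HH^{\geq 1}(X,\tangent_X)=0$, completing the proof. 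I would state the vanishing $\HH^{\geq 1}(X,\mathcal{O}_X)=0$ explicitly (with a reference to the structure of quiver moduli cohomology) as the one external input beyond \cref{theorem:cohomology-vanishing}.
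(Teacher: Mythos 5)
Your argument is correct, and it is the expected one: the paper does not reprove this corollary but recalls it from \cite{rigidity}, where it is deduced exactly as you do, by feeding \cref{theorem:cohomology-vanishing} into the 4-term sequence \eqref{equation:4-term-sequence} split at the image of $\Psi$. The one external input you flag, $\HH^{\geq 1}(X,\mathcal{O}_X)=0$, is indeed available and is used elsewhere in the paper (in the proof of \cref{proposition:spectral-sequence}) via the rationality of $X$ from \cite[Theorem~6.4]{MR1914089}; so your appeal to it is legitimate. You could even avoid that input entirely: the inclusion $\Phi\colon\mathcal{O}_X\to\bigoplus_{i\in Q_0}\mathcal{U}_i^\vee\otimes\mathcal{U}_i$ is split by the normalized trace $(f_i)_i\mapsto(\sum_i d_i)^{-1}\sum_i\operatorname{tr}(f_i)$, so $\HH^{\geq 1}(X,\mathcal{O}_X)$ is a direct summand of $\HH^{\geq 1}\bigl(X,\bigoplus_i\mathcal{U}_i^\vee\otimes\mathcal{U}_i\bigr)=0$, and the same then holds for $\HH^{\geq 1}(X,\mathcal{K})$ and hence for $\HH^{\geq 1}(X,\tangent_X)$. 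The only blemish in your write-up is the meandering middle passage where you briefly assert $\HH^{i}(X,\mathcal{K})=0$ for $i\geq 2$ ``directly'' before retracting it; the final clean chain (vanishing of $\HH^{\geq1}(X,\mathcal{O}_X)$ gives vanishing for $\mathcal{K}$, which gives vanishing for $\tangent_X$) is the right one and should be stated once, without the detours.
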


We can now give the description \eqref{equation:vector-fields-presentation} of the vector fields.

\begin{proof}[Proof of \cref{theorem:vector-fields}]
  By \cref{theorem:global-sections},
  a global section of $\bigoplus_{i \in Q_0} \mathcal{U}_i^\vee \otimes \mathcal{U}_i$
  is of the form $(z_i \operatorname{id}_{\mathcal{U}_i})_{i \in Q_0}$,
  which by \eqref{equation:Psi-description} is then mapped to $((z_{\target(a)} - z_{\source(a)})\mathcal{U}_a)_{a \in Q_1}$.
  This shows that the induced map of $\Psi$ on global sections corresponds to $\psi$ under the identifications
  \begin{equation}
    \begin{aligned}
      \bigoplus_{i \in Q_0} \mathcal{U}_i^\vee \otimes \mathcal{U}_i &\cong \bigoplus_{i \in Q_0} e_i\field Qe_i \\
      \bigoplus_{a \in Q_1} \mathcal{U}_{\source(a)}^\vee \otimes \mathcal{U}_{\target(a)} &\cong \bigoplus_{a \in Q_1} e_{\target(a)}\field Qe_{\source(a)}.
    \end{aligned}
  \end{equation}
  It is immediate that the map induced by $\Phi$ from \eqref{equation:4-term-sequence}
  on global sections corresponds to the morphism $\phi$ defined in \eqref{equation:phi}.
  This provides us with an exact sequence
  \begin{equation}
    0
    \to\field
    \xrightarrow{\phi} \bigoplus_{i \in Q_0} e_i\field Qe_i
    \xrightarrow{\psi} \bigoplus_{a \in Q_1} e_{\target(a)}\field Qe_{\source(a)}
    \to\HH^0(X,\tangent_X)
    \to\bigoplus_{i \in Q_0} \HH^1(X,\mathcal{U}_i^\vee \otimes \mathcal{U}_i).
  \end{equation}
  The rightmost term vanishes by \cref{theorem:cohomology-vanishing}.
  This proves \cref{theorem:vector-fields}.
\end{proof}

We want to point out a similarity between \cref{theorem:vector-fields}
and how the Hochschild cohomology of the path algebra~$\field Q$ is computed
in \cite[\S1.6]{MR1035222}.
The following statement is obtained by inspecting the proof in~loc.~cit.
\begin{theorem}[Happel]
  \label{theorem:happel}
  Let~$Q$ be an acyclic quiver.
  Then there exists a 4-term exact sequence
  \begin{equation}
    \label{equation:happel}
    0
    \to\field
    \to\bigoplus_{i\in Q_0}\field
    \to\bigoplus_{\alpha\in Q_1}e_{\target(\alpha)}\field Q e_{\source(\alpha)}
    \to\hochschild^1(\field Q)
    \to0.
  \end{equation}
\end{theorem}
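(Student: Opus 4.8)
The plan is to read off \eqref{equation:happel} directly from the standard projective bimodule resolution of the path algebra, computing $\hochschild^0(\field Q)$ as a kernel and $\hochschild^1(\field Q)$ as a cokernel of a single differential. Write $A=\field Q$ and $A^e=A\otimes_\field A^{\op}$. The input is the classical length\dash one resolution of $A$ as an $A$-bimodule,
\begin{equation}
  0\to\bigoplus_{\alpha\in Q_1}Ae_{\target(\alpha)}\otimes_\field e_{\source(\alpha)}A\xrightarrow{d}\bigoplus_{i\in Q_0}Ae_i\otimes_\field e_iA\xrightarrow{\mu}A\to0,
\end{equation}
where $\mu$ is multiplication and $d$ sends the generator $e_{\target(\alpha)}\otimes e_{\source(\alpha)}$ of the $\alpha$-summand to $\alpha\otimes e_{\source(\alpha)}-e_{\target(\alpha)}\otimes\alpha$. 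That this complex is exact is the statement that path algebras are hereditary; I would import it from the literature, e.g.\ \cite{MR1035222}, rather than reprove it. Because the resolution has length one, $\hochschild^{\geq2}(\field Q)=0$ automatically, which is exactly what makes \eqref{equation:happel} a four-term sequence.

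Next I would apply $\Hom_{A^e}(-,A)$ to the deleted resolution. Using the standard identification $\Hom_{A^e}(Ae_i\otimes_\field e_jA,A)\cong e_iAe_j$, the two cochain groups become $\bigoplus_{i}e_i\field Qe_i$ in degree $0$ and $\bigoplus_{\alpha}e_{\target(\alpha)}\field Qe_{\source(\alpha)}$ in degree $1$. Here acyclicity of $Q$ enters: the only path from $i$ to $i$ is the trivial one, so $e_i\field Qe_i=\field e_i\cong\field$ and the degree\dash zero term is $\bigoplus_{i}\field$. A short computation transports $d$ through these identifications: writing a degree\dash zero cochain as $(z_ie_i)_i$, its differential has $\alpha$-component $z_{\source(\alpha)}\alpha-z_{\target(\alpha)}\alpha$, which is the map $\psi$ of \eqref{equation:psi} up to an overall sign fixed by the convention for $d$. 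Since the sign is irrelevant for kernels and cokernels, I will simply identify the cochain differential with $\psi$.

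It then remains to compute the kernel. A tuple $(z_i)_i$ lies in $\ker\psi$ exactly when $z_{\source(\alpha)}=z_{\target(\alpha)}$ for every arrow, i.e.\ when $(z_i)_i$ is constant on each connected component of $Q$; for connected $Q$ this forces $(z_i)_i=z\sum_ie_i$, so $\ker\psi=\operatorname{im}\phi$ with $\phi$ the diagonal of \eqref{equation:phi}. (Connectedness is the only place where the leftmost $\field$ is used; for several components that term should read $\field^{\pi_0(Q)}$.) Splicing the injection $\phi$, the differential $\psi$, and the surjection onto $\hochschild^1(\field Q)=\operatorname{coker}\psi$ then yields \eqref{equation:happel}.

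The argument is routine once the bimodule resolution is available, so the only genuine input is its exactness, which I would cite. The single step demanding care is the explicit transport of $d$ through the $\Hom_{A^e}(-,A)$ identifications, confirming that it agrees with $\psi$ on the nose, including the roles of $\source$ and $\target$ and the sign; this bookkeeping is precisely what makes the parallel between \eqref{equation:happel} and \eqref{equation:vector-fields-presentation} manifest.
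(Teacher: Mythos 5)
Your proof is correct and is essentially the argument the paper points to: the statement is cited from Happel \cite[\S1.6]{MR1035222}, whose computation proceeds exactly via the standard length-one projective bimodule resolution of $\field Q$, application of $\Hom_{A^e}(-,A)$, and the identifications $\Hom_{A^e}(Ae_i\otimes e_jA,A)\cong e_iAe_j$ that you spell out. Your observation that the leftmost $\field$ requires $Q$ connected (and should read $\field^{\pi_0(Q)}$ in general) is a valid point of care that the paper leaves implicit.
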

This allows us to explain the origin of \cref{conjecture:lie-algebra}.
Recall that the outer automorphism group~$\Out(\field Q)$
is the affine algebraic group~$\Aut(\field Q)/\operatorname{Inn}(\field Q)$,
where~$\operatorname{Inn}(\field Q)$ are the inner automorphisms.
By \cite[Proposition~1.1]{MR1905030}
we have an isomorphism of Lie algebras
\begin{equation}
  \label{equation:hochschild-lie-algebras}
  \hochschild^1(\field Q)\cong\Lie\Out(\field Q),
\end{equation}
where the Lie algebra structure on the left is given by the Gerstenhaber bracket.
On the other hand we have,
for any smooth projective variety~$X$,
and thus in particular for the quiver moduli we are interested in,
an isomorphism of Lie algebras
\begin{equation}
  \HH^0(X,\tangent_X)\cong\Lie\Aut(X),
\end{equation}
where the Lie algebra structure on the left is given by the Schouten--Nijenhuis bracket of vector fields.
This explains the origin of \cref{conjecture:lie-algebra}.

The following example shows how \cref{theorem:global-sections} (and thus \cref{theorem:vector-fields})
fails without ample stability.
\begin{example}
  \label{example:3-vertex}
  Consider the 3-vertex quiver
  \begin{equation}
    \label{equation:3-vertex-quiver}
    Q\colon
    \begin{tikzpicture}[baseline = -20pt, node distance = 1.5cm]
      \node (1)                {};
      \node (2) [right of = 1] {};
      \node (3) [below of = 2] {};
      \draw (1) circle (2pt) node[above] {1};
      \draw (2) circle (2pt) node[above] {2};
      \draw (3) circle (2pt) node[below] {3};
      \draw[->]                  (1) edge (2);
      \draw[->, bend left = 25]  (2) edge (3);
      \draw[->, bend right = 25] (2) edge (3);
      \draw[->]                  (1) edge (3);
    \end{tikzpicture}
  \end{equation}
  for the thin dimension vector~$\mathbf{d}=\mathbf{1}=(1,1,1)$.
  As discussed at the end of \cite{MR4352662},
  there exists an identification
  \begin{equation}
    \modulispace[\stable{\theta}]{Q,\mathbf{1}}
    \cong
    \operatorname{Bl}_p\mathbb{P}^2,
  \end{equation}
  where~$\theta=\theta_\can=(2,1,-3)$ is the canonical stability condition.

  One can compute that there is precisely one other stability chamber
  where the associated moduli space is not empty.
  Let~$\theta'$ be a stability parameter in this chamber,
  e.g., $\theta'=(2,-1,-1)$.
  Then
  \begin{equation}
    \modulispace[\stable{\theta'}]{Q,\mathbf{1}}
    \cong
    \mathbb{P}^2,
  \end{equation}
  because the moduli space is a smooth projective rational surface of Picard rank~1,
  which can be determined by computing the Betti numbers using \cite[Corollary 6.9]{MR1974891},
  as implemented in \cite{hodge-diamond-cutter}.

  Every condition except the ample stability in \cref{assumption:standing-assumption} holds,
  and ample stability fails because the Picard rank drops.
  The summands of the universal representation are line bundles on~$\mathbb{P}^2$.
  But it is impossible to have an isomorphism
  \begin{equation}
    \HH^0(\modulispace[\stable{\theta'}]{Q,\mathbf{1}},\mathcal{U}_2^\vee\otimes\mathcal{U}_3)
    \cong
    e_3\field Qe_2
  \end{equation}
  because the right-hand side is~2\dash dimensional,
  which is impossible for the global sections of the line bundle~$\mathcal{U}_2^\vee\otimes\mathcal{U}_3$ on~$\mathbb{P}^2$,
  because those dimensions are necessarily of the form~$\binom{n+2}{n}$,
  so~$0,1,3,6,\ldots$

  We can also observe the failure of \cref{theorem:vector-fields}.
  Through its identification with~$\mathbb{P}^2$
  we obtain
  \begin{equation}
    \HH^0(\modulispace[\stable{\theta'}]{Q,\mathbf{1}},\tangent_{\modulispace[\stable{\theta'}]{Q,\mathbf{1}}})
    \cong
    \field^8,
  \end{equation}
  whereas
  \begin{equation}
    \hochschild^1(\field Q)
    \cong
    \field^6
  \end{equation}
  by \eqref{equation:happel}.
\end{example}

\section{Functors associated to the universal representation}
\label{section:admissible}

Given a smooth projective variety~$S$,
an acyclic quiver $Q$ and a locally free left $\mathcal{O}_SQ$-module $\mathcal{M}$,
we can consider associated \emph{four} Fourier--Mukai-like functors.
The conditions on~$S$, $Q$ and~$\mathcal{M}$ can be relaxed,
at the cost of not working with the bounded derived category,
but we will not do this.

Usually Fourier--Mukai functors are considered in the context of smooth projective varieties
(without a sheaf of algebras)
where these variations can be ignored,
but because we work with noncommutative algebras
we want to point out how they can be compared.

Using the sheaf Hom we have a covariant and a contravariant version
\begin{equation}
  \begin{aligned}
    \Phi_{\mathcal{M}}
    \colonequals\RsheafHom_{\mathcal{O}_SQ}(\mathcal{M},-\otimes\mathcal{O}_S)
    &:\derived^\bounded(\field Q)\to\derived^\bounded(S) \\
    \Psi_{\mathcal{M}}
    \colonequals\RsheafHom_{\mathcal{O}_SQ}(-\otimes\mathcal{O}_S,\mathcal{M})
    &:\derived^\bounded(\field Q)^\op\to\derived^\bounded(S)
  \end{aligned}
\end{equation}
and using the tensor product we have two covariant versions,
but the first considers \emph{right}~$\field Q$-modules,
leading to
\begin{equation}
  \begin{aligned}
    \Chi_{\mathcal{M}}
    \colonequals-\otimes_{\field Q}^{\mathbf{L}}\mathcal{M}
    &:\derived^\bounded(\field Q^\op)\to\derived^\bounded(S) \\
    \Omega_{\mathcal{M}}
    \colonequals (\dual\mathcal{M})\otimes_{\field Q}^{\mathbf{L}}-
    &:\derived^\bounded(\field Q)\to\derived^\bounded(S),
  \end{aligned}
\end{equation}
where $\dual\mathcal{M} = \mathcal{M}^\vee = \sheafHom_{\mathcal{O}_S}(\mathcal{M},\mathcal{O}_S)$,
equipped with the natural structure of a \emph{right} $\mathcal{O}_SQ$-module.
We will use similar notation for turning a left~$\field Q$-module into a right~$\field Q$-module,
which can also be considered as a left~$\field Q^\op$-module.

Note that all four functors depend on $S$ and $Q$ as well, but we will suppress this in the notation.
Their relationship is explained by the following lemmas.
\begin{lemma}
  \label{lemma:identification-1}
  Let~$N$ be an object in~$\derived^\bounded(\field Q^\op)$.
  Then
  \begin{equation}
    \Chi_{\mathcal{M}}(N)^\vee
    \cong
    \Psi_{\dual\mathcal{M}}(N)
    \cong
    \Phi_{\mathcal{M}}(\dual N)
  \end{equation}
  in~$\derived^\bounded(S)$.
\end{lemma}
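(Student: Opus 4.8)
The plan is to route both claimed isomorphisms through the single object $(N \otimes_{\field Q}^{\mathbf{L}} \mathcal{M})^\vee = \Chi_{\mathcal{M}}(N)^\vee$: I will show separately that $\Psi_{\dual\mathcal{M}}(N)$ and $\Phi_{\mathcal{M}}(\dual N)$ are each isomorphic to it, so that the chain in the statement follows by composition. Throughout I write $\mathcal{A} = \mathcal{O}_SQ$ and use repeatedly that $\mathcal{A} = \mathcal{O}_S \otimes_\field \field Q$, that $\mathcal{M}$ is locally free over $\mathcal{O}_S$, and that $N$ is a bounded complex of finite-dimensional $\field Q^\op$-modules (finite-dimensionality being automatic since $\field Q$ is finite-dimensional for acyclic $Q$). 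These finiteness hypotheses are precisely what keep all four functors inside $\derived^\bounded(S)$ and what let the underived adjunctions below pass to the derived level.

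For the first isomorphism I would start from $\Psi_{\dual\mathcal{M}}(N) = \RsheafHom_{\mathcal{A}^{\op}}(N \otimes_\field \mathcal{O}_S, \sheafHom_{\mathcal{O}_S}(\mathcal{M},\mathcal{O}_S))$, noting that $\dual\mathcal{M} = \mathcal{M}^\vee$ is a right $\mathcal{A}$-module, equivalently a left $\mathcal{A}^{\op}$-module. The derived tensor–hom adjunction then gives
\[
  \RsheafHom_{\mathcal{A}^{\op}}\bigl(N \otimes_\field \mathcal{O}_S,\, \sheafHom_{\mathcal{O}_S}(\mathcal{M},\mathcal{O}_S)\bigr)
  \cong
  \RsheafHom_{\mathcal{O}_S}\bigl(\mathcal{M} \otimes_{\mathcal{A}^{\op}}^{\mathbf{L}} (N \otimes_\field \mathcal{O}_S),\, \mathcal{O}_S\bigr).
\]
The inner derived tensor product collapses by base change: since $\mathcal{A} = \mathcal{O}_S \otimes_\field \field Q$ and $\mathcal{M}$ already carries its $\mathcal{O}_S$-structure, one has $\mathcal{M} \otimes_{\mathcal{A}^{\op}}^{\mathbf{L}} (N \otimes_\field \mathcal{O}_S) \cong N \otimes_{\field Q}^{\mathbf{L}} \mathcal{M}$. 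This identifies $\Psi_{\dual\mathcal{M}}(N)$ with $(N \otimes_{\field Q}^{\mathbf{L}} \mathcal{M})^\vee$, hence with $\Chi_{\mathcal{M}}(N)^\vee$.

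For the second isomorphism I would begin from $\Phi_{\mathcal{M}}(\dual N) = \RsheafHom_{\mathcal{A}}(\mathcal{M}, N^\vee \otimes_\field \mathcal{O}_S)$, where $N^\vee = \Hom_\field(N,\field)$ is a left $\field Q$-module. Because $N$ is perfect over $\field$, there is a natural isomorphism $N^\vee \otimes_\field \mathcal{O}_S \cong \sheafHom_{\mathcal{O}_S}(N \otimes_\field \mathcal{O}_S, \mathcal{O}_S)$ of left $\mathcal{A}$-modules, and the tensor–hom adjunction again yields
\[
  \RsheafHom_{\mathcal{A}}\bigl(\mathcal{M},\, \sheafHom_{\mathcal{O}_S}(N \otimes_\field \mathcal{O}_S, \mathcal{O}_S)\bigr)
  \cong
  \RsheafHom_{\mathcal{O}_S}\bigl((N \otimes_\field \mathcal{O}_S) \otimes_{\mathcal{A}}^{\mathbf{L}} \mathcal{M},\, \mathcal{O}_S\bigr).
\]
The same base-change identity $(N \otimes_\field \mathcal{O}_S) \otimes_{\mathcal{A}}^{\mathbf{L}} \mathcal{M} \cong N \otimes_{\field Q}^{\mathbf{L}} \mathcal{M}$ lands us once more on $(N \otimes_{\field Q}^{\mathbf{L}} \mathcal{M})^\vee$, completing the chain.

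The bulk of the work is module-structure bookkeeping: tracking which side of $\mathcal{A}$ each of $N$, $\mathcal{M}$, $\mathcal{M}^\vee$ acts on, and the passage between $Q$ and $Q^\op$ dictated by the $\dual$ operations. I expect the only genuinely delicate points to be the derived base-change isomorphism $(N \otimes_\field \mathcal{O}_S) \otimes_{\mathcal{A}}^{\mathbf{L}} \mathcal{M} \cong N \otimes_{\field Q}^{\mathbf{L}} \mathcal{M}$ and the commutation $N^\vee \otimes_\field \mathcal{O}_S \cong (N \otimes_\field \mathcal{O}_S)^\vee$; both rest on the finiteness hypotheses (local freeness of $\mathcal{M}$, perfectness of $N$ over $\field$) to guarantee that the naive adjunctions derive without boundedness or flatness obstructions, and that the $\mathcal{O}_S$-dual $(-)^\vee$ is well behaved on the resulting perfect complexes.
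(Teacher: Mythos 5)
Your proposal is correct and follows essentially the same route as the paper: both arguments pivot on the object $\RsheafHom_{\mathcal{O}_S}(N\otimes_{\field Q}^{\mathbf{L}}\mathcal{M},\mathcal{O}_S)=\Chi_{\mathcal{M}}(N)^\vee$ and obtain the two identifications from the base-change isomorphism $N\otimes_{\field Q}^{\mathbf{L}}\mathcal{M}\cong(N\otimes_{\field}^{\mathbf{L}}\mathcal{O}_S)\otimes_{\mathcal{O}_SQ}^{\mathbf{L}}\mathcal{M}$ together with tensor--Hom adjunction, merely read in the opposite direction. Your explicit attention to $\dual N\otimes_\field\mathcal{O}_S\cong(N\otimes_\field\mathcal{O}_S)^\vee$ and the finiteness hypotheses makes implicit steps of the paper's proof visible, but introduces no new idea.
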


\begin{proof}
  By definition we have
  \begin{equation}
    \Chi_{\mathcal{M}}(N)^\vee
    =
    \RsheafHom_{\mathcal{O}_S}(N\otimes_{\field Q}^{\mathbf{L}}\mathcal{M},\mathcal{O}_S).
  \end{equation}
  Using the isomorphism~$N\otimes_{\field Q}^{\mathbf{L}}\mathcal{M} \cong N\otimes_{\field}^{\mathbf{L}} \mathcal{O}_S \otimes_{\mathcal{O}_SQ}^{\mathbf{L}} \mathcal{M}$
  and the tensor-Hom adjunction,
  we can rewrite the right-hand side to
  \begin{equation}
    \RsheafHom_{\mathcal{O}_SQ^\op}(N\otimes_\field^{\mathbf{L}}\mathcal{O}_S,\dual\mathcal{M})
  \end{equation}
  which is~$\Psi_{\dual\mathcal{M}}(N)$,
  resp.~to
  \begin{equation}
    \RsheafHom_{\mathcal{O}_SQ}(\mathcal{M},\dual M\otimes_\field^{\mathbf{L}}\mathcal{O}_S)
  \end{equation}
  which is~$\Psi_{\mathcal{M}}(\dual M)$.
\end{proof}

Similarly we have the following lemma.
\begin{lemma}
  \label{lemma:identification-2}
  Let~$M$ be an object in~$\derived^\bounded(\field Q)$.
  Then
  \begin{equation}
    \Omega_{\mathcal{M}}(M)^\vee
    \cong
    \Psi_{\mathcal{M}}(M)
    \cong
    \Phi_{\dual\mathcal{M}}(\dual M)
  \end{equation}
  in~$\derived^\bounded(S)$.
\end{lemma}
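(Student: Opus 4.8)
The plan is to mirror the proof of \cref{lemma:identification-1}, interchanging the roles of the left and right module structures. The natural starting point is to write out the $\mathcal{O}_S$-linear dual
\begin{equation}
  \Omega_{\mathcal{M}}(M)^\vee = \RsheafHom_{\mathcal{O}_S}\bigl((\dual\mathcal{M})\otimes_{\field Q}^{\mathbf{L}}M,\mathcal{O}_S\bigr),
\end{equation}
and then, exactly as in the previous lemma, to rewrite the inner tensor product via the base-change identity $(\dual\mathcal{M})\otimes_{\field Q}^{\mathbf{L}}M \cong \dual\mathcal{M}\otimes_{\mathcal{O}_SQ}^{\mathbf{L}}(M\otimes_\field\mathcal{O}_S)$. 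After this step the tensor product is taken over the sheaf of algebras $\mathcal{O}_SQ$, with $\dual\mathcal{M}$ a right and $M\otimes_\field\mathcal{O}_S$ a left $\mathcal{O}_SQ$-module.

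Both asserted isomorphisms then fall out by applying the tensor--Hom adjunction to this single expression in the two available ways. Moving $\dual\mathcal{M}$ across the adjunction over $\mathcal{O}_SQ$ produces $\RsheafHom_{\mathcal{O}_SQ}(M\otimes_\field\mathcal{O}_S,(\dual\mathcal{M})^\vee)$; since $\mathcal{M}$ is locally free we have the reflexivity isomorphism $(\dual\mathcal{M})^\vee\cong\mathcal{M}$, and the result is precisely $\Psi_{\mathcal{M}}(M)$. Moving instead the factor $M\otimes_\field\mathcal{O}_S$ across the adjunction over $\mathcal{O}_SQ^\op$ produces $\RsheafHom_{\mathcal{O}_SQ^\op}(\dual\mathcal{M},(M\otimes_\field\mathcal{O}_S)^\vee)$; identifying $(M\otimes_\field\mathcal{O}_S)^\vee\cong\dual M\otimes_\field\mathcal{O}_S$ as a right $\mathcal{O}_SQ$-module — which is legitimate because $M$ is perfect over $\field$, $\field Q$ being finite-dimensional — gives $\Phi_{\dual\mathcal{M}}(\dual M)$.

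The only genuine bookkeeping, and the step I expect to be the main obstacle, is keeping the left/right $\mathcal{O}_SQ$-module structures straight throughout: one must check that the Hom is correctly formed over $\mathcal{O}_SQ$ in the first case and over $\mathcal{O}_SQ^\op$ in the second, and that the two dualizations $(\dual\mathcal{M})^\vee\cong\mathcal{M}$ and $(M\otimes_\field\mathcal{O}_S)^\vee\cong\dual M\otimes_\field\mathcal{O}_S$ transport the intended module structures. Once the sidedness is pinned down the rest is formal, so the lemma is indeed established ``similarly'' to \cref{lemma:identification-1}.
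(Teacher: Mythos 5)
Your argument is correct and is exactly what the paper intends: the paper gives no separate proof of \cref{lemma:identification-2}, merely stating it follows ``similarly'' to \cref{lemma:identification-1}, and your dualized version of that proof (base change to a tensor product over $\mathcal{O}_SQ$, then the two applications of tensor--Hom adjunction, using reflexivity of the locally free $\mathcal{M}$ and finite-dimensionality of $M$) is the intended argument.
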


\subsection{Admissible embeddings}

Following the sheaf-theoretic examples of admissible embeddings cited in the introduction,
we will now prove \cref{theorem:admissible}.
In fact, the admissible embeddings cited in the introduction
are preceded by an admissible embedding
in a restricted setting of quiver moduli,
obtained by Altmann--Hille \cite[Theorem~1.3]{MR1688469},
which we will recall to illustrate the methods.
In a different restricted setting,
that of quiver flag varieties,
it was obtained by Craw--Ito--Karmazyn in \cite[Example~2.9]{MR3803802}.

Their condition that the canonical stability parameter~$\theta_\can$ does not lie on
any~$(1,0)$- or~$(t,t)$-walls in the terminology of \cite[\S2.2]{MR1688469}
is implied by \cref{assumption:standing-assumption},
where~$\mathbf{d}=\mathbf{1}$.

\begin{theorem}[Altmann--Hille]
  \label{theorem:altmann-hille}
  Let~$Q,\mathbf{d}=\mathbf{1}$ and~$\theta_\can$ satisfy \cref{assumption:standing-assumption}.
  Consider~$X=\modulispace[\stable{\theta_\can}]{Q,\mathbf{1}}$.
  Then~$X$ is a smooth projective toric Fano variety,
  with~$\rk\Pic X=\#Q_0-1$,
  and
  \begin{equation}
    \label{equation:altmann-hille-embedding}
    \Chi_{\mathcal{U}}\colon\derived^\bounded(\field Q^\op)\to\derived^\bounded(X)
  \end{equation}
  is fully faithful.
\end{theorem}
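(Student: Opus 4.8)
The plan is to dispatch the three assertions in turn, with the full faithfulness last and hardest.

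For the geometric statements, first note that when $\mathbf{d}=\mathbf{1}$ the group $\group{\mathbf{1}}=\prod_{i\in Q_0}\Gm$ is a torus acting on the affine space $\representationvariety{Q,\mathbf{1}}\cong\mathbb{A}^{\#Q_1}$, so the GIT quotient $X$ is toric by construction; smoothness and projectivity are already furnished by \cref{assumption:standing-assumption} through \cref{proposition:nice}, and the identity $\rk\Pic X=\#Q_0-1$ is exactly the consequence of $\theta$\dash ample stability recorded after \cref{definition:amply-stable}. For the Fano property I would take determinants in the $4$\dash term sequence \eqref{equation:4-term-sequence}: since each $\mathcal{U}_i^\vee\otimes\mathcal{U}_i\cong\mathcal{O}_X$ in the thin case, the alternating product collapses to $-K_X=\det\tangent_X\cong\bigotimes_{a\in Q_1}\mathcal{U}_{\source(a)}^\vee\otimes\mathcal{U}_{\target(a)}$, and this line bundle is the ample GIT polarization attached to $\theta_\can$, whence $X$ is Fano.

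For full faithfulness I would reduce to a generating set. As $Q$ is acyclic, $\field Q$ has finite global dimension and $\derived^\bounded(\field Q^\op)$ is classically generated by the indecomposable projective right modules $e_i\field Q$, $i\in Q_0$; hence it suffices to check that $\Chi_{\mathcal{U}}$ induces an isomorphism $\Hom(e_i\field Q,e_j\field Q[k])\xrightarrow{\sim}\Hom(\Chi_{\mathcal{U}}(e_i\field Q),\Chi_{\mathcal{U}}(e_j\field Q)[k])$ for all $i,j\in Q_0$ and all $k$. Projectivity makes the derived tensor underived, giving $\Chi_{\mathcal{U}}(e_i\field Q)=e_i\field Q\otimes_{\field Q}\mathcal{U}=\mathcal{U}_i$, a line bundle, together with $\Hom(e_i\field Q,e_j\field Q[k])=e_j\field Q e_i$ for $k=0$ and $0$ otherwise, while on the geometric side $\Hom(\mathcal{U}_i,\mathcal{U}_j[k])=\HH^k(X,\mathcal{U}_i^\vee\otimes\mathcal{U}_j)$. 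In degree $0$ the comparison map is precisely $h_{i,j}^{\mathcal{U}}$: a right\dash module map $e_i\field Q\to e_j\field Q$ is left multiplication by some $x\in e_j\field Q e_i$, and $\Chi_{\mathcal{U}}$ carries it to $\mathcal{U}_x\colon\mathcal{U}_i\to\mathcal{U}_j$. So the first part of \cref{theorem:global-sections} settles $k=0$, and what remains is the vanishing $\HH^{\geq1}(X,\mathcal{U}_i^\vee\otimes\mathcal{U}_j)=0$ for $k\geq1$.

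The hard part will be this higher vanishing, because \cref{theorem:cohomology-vanishing} supplies it only under $\theta$\dash \emph{strong} ample stability, which is not among the hypotheses here. My plan is to obtain it from the toric geometry of the canonical chamber rather than from \cref{theorem:cohomology-vanishing}. Concretely, I would verify the claim made just before the statement, that \cref{assumption:standing-assumption} with $\mathbf{d}=\mathbf{1}$ and $\theta=\theta_\can$ forces $\theta_\can$ to avoid every $(1,0)$\dash and $(t,t)$\dash wall in the sense of \cite[\S2.2]{MR1688469}; Altmann--Hille's \cite[Theorem~1.3]{MR1688469} then provides both the vanishing and the full faithfulness directly in this toric setting. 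An alternative self\dash contained route would be to show that for $\mathbf{d}=\mathbf{1}$ ample stability already implies strong ample stability, after which \cref{theorem:cohomology-vanishing} applies verbatim; checking that combinatorial implication would be the only extra step.
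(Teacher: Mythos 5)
Your outline matches the paper's treatment: the paper does not reprove this statement but recalls it from Altmann--Hille, and its stated proof sketch consists of exactly your two ingredients --- Kodaira vanishing for the line bundles $\mathcal{U}_i^\vee\otimes\mathcal{U}_j$ on the toric Fano $X$ (which is where the restriction to $\theta_\can$ and the wall conditions of \cite[\S2.2]{MR1688469} enter), and the identification of global sections with $e_j\field Qe_i$, with the compatibility of that identification with the functor handled as you do via the projective generators (cf.\ the proof of \cref{theorem:admissible}). Your observation that \cref{theorem:cohomology-vanishing} is unavailable without strong ample stability, so that the higher vanishing must instead come from the Fano/toric geometry, is precisely the point the paper makes in explaining why the theorem is stated only for $\theta_\can$.
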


In the thin case
the proof reduces to
\begin{itemize}
  \item higher cohomology vanishing for the tensor products~$\mathcal{U}_i^\vee\otimes\mathcal{U}_j$ \cite[Theorem~3.6]{MR1688469};
  \item an identification of the global sections~$\HH^0(\mathcal{U}_i^\vee\otimes\mathcal{U}_j)\cong e_j\field Qe_i$ \cite[Theorem~4.3]{MR1688469}.
\end{itemize}
The vanishing is an application of the Kodaira vanishing theorem,
for which it is important that~$\mathcal{U}_i^\vee\otimes\mathcal{U}_j$ is a line bundle
and that~$X$ is a Fano variety,
and hence why \cref{theorem:altmann-hille} is stated only for~$\theta_\can$.
The identification of the global sections uses the toric description in \cite[Proposition~3.1]{MR1688469}.
A minor but important detail which is omitted in the proof of \cite[Theorem~4.3]{MR1688469}
is the fact that the isomorphism needs to be induced by the functor \eqref{equation:altmann-hille-embedding}.
We will address this in our more general setting in the proof of \cref{theorem:admissible}.

\paragraph{Admissible embedding in the general case}
To check the fully faithfulness in \cref{theorem:admissible}
we will apply the following fully faithfulness criterion \cite[Proposition~1.49]{MR2244106}.

\begin{proposition}
  \label{proposition:fully-faithfulness-criterion}
  Let~$F\colon\mathcal{C}\to\mathcal{D}$ be an exact functor between triangulated categories,
  which admits a left and a right adjoint.
  Let~$\mathcal{S}$ be a spanning class for~$\mathcal{C}$ such that
  for all~$C,C'\in\mathcal{S}$ and all~$i\in\mathbb{Z}$
  the natural morphism
  \begin{equation}
    F_{C,C'}\colon\Hom_{\mathcal{C}}(C,C'[i])\to\Hom_{\mathcal{D}}(F(C),F(C'[i]))
  \end{equation}
  is an isomorphism.
  Then~$F$ is fully faithful.
\end{proposition}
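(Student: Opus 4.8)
The plan is to run the classical two-sided adjunction argument (essentially Bridgeland's proof of this criterion). Write $G$ for the left adjoint and $H$ for the right adjoint of $F$, so that $G\dashv F\dashv H$, and recall that a spanning class $\mathcal{S}$ is characterised by two detection properties: an object $X\in\mathcal{C}$ vanishes as soon as either $\Hom_{\mathcal{C}}(C,X[i])=0$ for all $C\in\mathcal{S}$ and all $i$, or $\Hom_{\mathcal{C}}(X,C[i])=0$ for all $C\in\mathcal{S}$ and all $i$. The idea is to first exploit $H$ to upgrade the hypothesis from ``$F_{C,C'}$ is an isomorphism for $C,C'\in\mathcal{S}$'' to ``$F_{A,C'}$ is an isomorphism for \emph{all} $A$ and all $C'\in\mathcal{S}$'', and then exploit $G$ symmetrically to remove the restriction on the second variable as well.

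First I would use the right adjoint. Under the adjunction isomorphism $\Hom_{\mathcal{D}}(F(A),F(B)[i])\cong\Hom_{\mathcal{C}}(A,HF(B)[i])$, the natural map $F_{A,B}$ is identified with postcomposition by the unit $\eta_B\colon B\to HF(B)$. Completing $\eta_B$ to a distinguished triangle $B\to HF(B)\to K_B\to B[1]$ and applying $\Hom_{\mathcal{C}}(A,-)$, the long exact sequence shows that $F_{A,B}$ is an isomorphism for all $i$ precisely when $\Hom_{\mathcal{C}}(A,K_B[i])=0$ for all $i$. The hypothesis therefore gives $\Hom_{\mathcal{C}}(C,K_{C'}[i])=0$ for all $C,C'\in\mathcal{S}$ and all $i$; the first detection property forces $K_{C'}=0$, so $\eta_{C'}$ is an isomorphism for every $C'\in\mathcal{S}$. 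But once $\eta_{C'}$ is an isomorphism, postcomposition by it is an isomorphism on $\Hom_{\mathcal{C}}(A,-)$ for \emph{every} object $A$, so $F_{A,C'}$ is an isomorphism for all $A\in\mathcal{C}$ and all $C'\in\mathcal{S}$.

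Next I would repeat the argument on the other side using the left adjoint $G$. Now $\Hom_{\mathcal{D}}(F(A),F(B)[i])\cong\Hom_{\mathcal{C}}(GF(A),B[i])$, and $F_{A,B}$ is identified with precomposition by the counit $\epsilon_A\colon GF(A)\to A$. Completing $\epsilon_A$ to a triangle $GF(A)\to A\to K'_A\to GF(A)[1]$ and applying $\Hom_{\mathcal{C}}(-,B)$ shows that $F_{A,B}$ is an isomorphism for all $i$ exactly when $\Hom_{\mathcal{C}}(K'_A,B[i])=0$ for all $i$. By the previous paragraph this vanishing holds with $B=C'$ for every $C'\in\mathcal{S}$ and every $A$, so the second detection property of the spanning class forces $K'_A=0$; hence $\epsilon_A$ is an isomorphism for all $A$. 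Equivalently $F_{A,B}$ is an isomorphism for all $A,B\in\mathcal{C}$ and all $i$, which is full faithfulness.

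The only genuinely delicate point is the bookkeeping in the two translation steps: one must check that the natural comparison map $F_{A,B}$ really corresponds to composition with the unit $\eta_B$ (respectively the counit $\epsilon_A$) under the stated adjunction isomorphisms, and that this identification is natural in $A$ (respectively in $B$) so that the long exact sequences may be invoked. This compatibility is exactly the triangle identity for the adjunctions, so no real difficulty arises; everything else is a formal consequence of the two detection properties of a spanning class applied to the cones $K_{C'}$ and $K'_A$.
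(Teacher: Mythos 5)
Your argument is correct and is precisely the standard two-sided adjunction proof of this criterion; the paper itself does not prove the proposition but cites it as \cite[Proposition~1.49]{MR2244106}, whose proof is exactly the one you reproduce. The only points worth making explicit are that the adjoints $G$ and $H$ of an exact functor are themselves exact (so the shifts and long exact sequences behave as you use them) and the unit/counit identifications you already flag, so there is nothing to add.
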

We will apply \cref{proposition:fully-faithfulness-criterion}
using the first spanning class in the following standard lemma.
\begin{lemma}
  \label{lemma:spanning-classes}
  Let~$Q$ be an acyclic quiver.
  Then the following are spanning classes:
  \begin{itemize}
    \item the set~$\{P_i\mid i\in Q_0\}$ of indecomposable projectives;
    \item the set~$\{I_i\mid i\in Q_0\}$ of indecomposable injectives.
  \end{itemize}
\end{lemma}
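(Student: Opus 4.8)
The plan is to recall the general notion of a spanning class and then verify the defining condition for each of the two proposed sets. Recall that a set $\mathcal{S}$ of objects in a triangulated category $\mathcal{C}$ is a \emph{spanning class} if the following two conditions hold: first, if $A \in \mathcal{C}$ satisfies $\Hom_{\mathcal{C}}(S, A[i]) = 0$ for all $S \in \mathcal{S}$ and all $i \in \mathbb{Z}$, then $A \cong 0$; and second, the dual condition, that $\Hom_{\mathcal{C}}(A[i], S) = 0$ for all $S \in \mathcal{S}$ and all $i$ implies $A \cong 0$. Here $\mathcal{C} = \derived^\bounded(\field Q)$, which for an acyclic quiver is a category of finite global dimension, so both Hom-spaces and the objects involved are well-behaved.

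\medskip

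\textbf{First I would} handle the indecomposable projectives $\{P_i \mid i \in Q_0\}$. The key input is the standard computation $\Hom_{\derived^\bounced(\field Q)}(P_i, A[i]) \cong \HH^i(\Hom_{\field Q}(P_i, A))$, and since $P_i = \field Q e_i$ is projective, $\Hom_{\field Q}(P_i, -)$ is exact and equals the functor $A \mapsto e_i A = A_i$ extracting the $i$-th component. Thus $\Hom_{\derived^\bounded(\field Q)}(P_i, A[i]) \cong \HH^i(A)_i$, the $i$-th vertex component of the $i$-th cohomology of the complex $A$. Vanishing for all $i \in Q_0$ and all $i \in \mathbb{Z}$ therefore forces every cohomology module $\HH^i(A)$ to be zero at every vertex, hence $\HH^i(A) = 0$ for all $i$, hence $A \cong 0$ in the bounded derived category. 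This disposes of the first orthogonality condition. For the second condition, I would use that $\{P_i\}$ generates $\derived^\bounded(\field Q)$ as a thick (indeed triangulated) subcategory --- every finite-dimensional module admits a finite projective resolution by the $P_i$ since $\field Q$ has finite global dimension --- so if $A$ is right-orthogonal to the $P_i$ then $A$ is orthogonal to all of $\derived^\bounded(\field Q)$, in particular to itself, forcing $A \cong 0$.

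\medskip

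\textbf{For the injectives} $\{I_i \mid i \in Q_0\}$ I would argue dually, either by applying the Nakayama duality functor $\field Q\text{-mod} \to \field Q^\op\text{-mod}$ which exchanges projectives and injectives, or directly: $\Hom_{\derived^\bounded(\field Q)}(A[i], I_i) \cong \HH^i(\Hom_{\field Q}(A, I_i))$, and since $I_i$ is injective this equals $\HH^i(A)_i$ up to the relevant duality, so the same vanishing argument applies with the roles of the two orthogonality conditions swapped. Concretely, the indecomposable injectives also generate $\derived^\bounded(\field Q)$ as a triangulated category (again by finite injective dimension), which handles one condition, while the pointwise Hom-computation handles the other.

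\medskip

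\textbf{The main obstacle} --- really the only subtlety --- is keeping the two orthogonality conditions straight and invoking the correct exactness: for the projectives the ``left'' vanishing is immediate from exactness of $\Hom(P_i, -)$, while the ``right'' vanishing needs the generation statement, and for the injectives these roles are interchanged. None of the steps is deep; this is why the lemma is labelled standard, and I would expect the proof to be only a few lines, perhaps even dispatched by a citation to a textbook such as Huybrechts or by the remark that these facts are classical for hereditary algebras. The one thing to state carefully is that acyclicity of $Q$ guarantees $\field Q$ is finite-dimensional of finite global dimension, which is what makes both families finite resolutions available and hence makes both sets generating.
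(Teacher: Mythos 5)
Your proof is correct and is exactly the standard argument the authors have in mind: the paper states this as a ``standard lemma'' and offers no proof of its own, so there is nothing to diverge from. The only blemishes are cosmetic --- you overload the symbol $i$ as both a vertex and a cohomological shift, and in the injective case the identification $\Hom(A[n],I_i)\cong\dual\bigl(\HH^n(A)_i\bigr)$ deserves the explicit adjunction $\Hom_{\field Q}(M,\dual(e_i\field Q))\cong\dual(M_i)$ rather than ``up to the relevant duality'' --- but the mathematical content (exactness of $\Hom(P_i,-)$, resp.\ $\Hom(-,I_i)$, for one orthogonality condition, and generation of $\derived^\bounded(\field Q)$ by either family via finite global dimension of the hereditary algebra $\field Q$ for the other) is complete and correct.
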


We will apply \cref{proposition:fully-faithfulness-criterion}
using the spanning class of indecomposable projectives from \cref{lemma:spanning-classes}.

\begin{proof}[Proof of \cref{theorem:admissible}]
  Let us first show that $\Psi_\mathcal{U}$ is fully faithful.
  As $P_i \otimes \mathcal{O}_X$ is a projective left $\mathcal{O}_XQ$-module, we have
  \begin{equation}
    \Psi_{\mathcal{U}}(P_i) = \RsheafHom_{\mathcal{O}_XQ}(P_i \otimes \mathcal{O}_X, \mathcal{U}) \cong \sheafHom_{\mathcal{O}_XQ}(P_i \otimes \mathcal{O}_X, \mathcal{U}) \cong\mathcal{U}_i.
  \end{equation}
  The isomorphism $\sheafHom_{\mathcal{O}_XQ}(P_i \otimes \mathcal{O}_X, \mathcal{U}) \cong\mathcal{U}_i$ can be checked affine locally. Let $U = \operatorname{Spec} A$ and let $M$ be the left $AQ$-module belonging to $\mathcal{U}|_U$. Over $U$ we have the obvious isomorphism $\Hom_{AQ}(P_i \otimes A,M) \cong M_i$. They glue to a global isomorphism over $X$.

  We have
  \begin{itemize}
    \item $\Hom_{\field Q}(P_j,P_i)\cong e_j\field Qe_i$,
    \item $\Ext_{\field Q}^n(P_j,P_i)=0$ for all~$n\geq 1$, because~$P_j$ is projective.
  \end{itemize}
  So we need to show that
  \begin{itemize}
    \item the natural map
      \begin{equation}
        \label{equation:Phi-Pi-Pj}
        \Psi_{\mathcal{U},P_i,P_j}\colon\Hom_{\field Q}(P_j,P_i)\to\Hom_X(\mathcal{U}_i,\mathcal{U}_j)\cong\HH^0(X,\mathcal{U}_i^\vee\otimes\mathcal{U}_j)
      \end{equation}
      is an isomorphism;
    \item $\Ext_X^n(\mathcal{U}_i,\mathcal{U}_j)=0$ for all~$n\geq 1$.
  \end{itemize}
  The second point follows from the cohomology vanishing in \cite{rigidity},
  recalled in \cref{theorem:cohomology-vanishing}.

  For the first point, let us consider the basis of~$e_j\field Qe_i$ of paths from~$i$ to~$j$.
  Let~$q$ be such a path.
  The associated homomorphism $\psi_q\colon P_j \to P_i$ is given on basis elements
  by mapping a path $p$ starting at $j$ to $\psi_q(p) = pq$.
  Using this, we see that the diagram
  \begin{equation}
    \begin{tikzcd}
      \sheafHom_{\mathcal{O}_XQ}(P_i \otimes \mathcal{O}_X,\mathcal{U}) \arrow{r}{\cong} \arrow{d}{\sheafHom(\psi_q \otimes \operatorname{id},-)} & \mathcal{U}_i \arrow{d}{\mathcal{U}_q} \\
      \sheafHom_{\mathcal{O}_XQ}(P_j \otimes \mathcal{O}_X,\mathcal{U}) \arrow{r}{\cong} & \mathcal{U}_j
    \end{tikzcd}
  \end{equation}
  commutes; this can again be checked affine locally.
  The image of~$\psi_q$ under~$\Psi_{\mathcal{U},P_i,P_j}$
  is therefore the morphism~$\mathcal{U}_q\colon\mathcal{U}_i\to\mathcal{U}_j$.
  This shows that the composition
  \begin{equation}
    \begin{tikzcd}
      e_j\field Qe_i \arrow{r}{\cong} & \Hom_{\field Q}(P_j,P_i) \arrow{r}{\Psi_{\mathcal{U},P_i,P_j}} &[1em] \HH^0(X,\mathcal{U}_i^\vee \otimes \mathcal{U}_j) \\[-2em]
      q \arrow[mapsto]{r}{} & \psi_q
    \end{tikzcd}
  \end{equation}
  agrees with the isomorphism from \cref{theorem:global-sections}. Therefore $\Psi_{\mathcal{U},P_i,P_j}$ must be an isomorphism as well.

  The proof of the fully faithfulness of $\Phi_\mathcal{U}$ is similar.
  Here we use that $\Phi_\mathcal{U}(I_i) \cong \mathcal{U}_i^\vee$.
  The comparison in \cref{lemma:identification-1,lemma:identification-2}
  gives the fully faithfulness of~$\Chi_{\mathcal{U}}$ and~$\Omega_{\mathcal{U}}$.
\end{proof}

\subsection{Rigidity and vector fields from the admissible embedding}
In \cite[\S4]{MR3950704} the fully faithful functor~$\Phi_{\mathcal{I}}$
from \eqref{equation:universal-ideal-sheaf-fully-faithful}
(provided~$\mathcal{O}_S$ is exceptional)
is used to relate the Hochschild cohomology of~$S$
to the deformation theory of~$\operatorname{Hilb}^nS$.
We will now explain how a similar reasoning
allows us to describe~$\HH^i(X,\tangent_X)$,
where~$X$ still denotes~$\modulispace[\stable\theta]{Q,\mathbf{d}}$.
Because the proof of \cref{theorem:admissible}
is heavily dependent on \cref{theorem:cohomology-vanishing} and \cref{theorem:global-sections}
this is not an independent description of~$\HH^i(X,\tangent_X)$
(i.e., the combination of \cref{theorem:global-sections} and \cref{corollary:rigidity}),
but it highlights an important parallel between the behavior of different moduli problems.

Recall from \cite[Equation~(24)]{2307.01711v2}
the local-to-global spectral sequence
\begin{equation}
  \label{equation:spectral-sequence}
  \mathrm{E}_2^{p,q}
  =\HH^p(X,\sheafExt_{\mathcal{O}_XQ}^p(\mathcal{U},\mathcal{U}))
  \Rightarrow
  \Ext_{\mathcal{O}_XQ}^{p+q}(\mathcal{U},\mathcal{U}).
\end{equation}
The following lemma identifies the abutment of \eqref{equation:spectral-sequence}
with the Hochschild cohomology of~$\field Q$.
It is the analogue of \cite[Lemmas~16 and~17]{MR3950704}.
We have opted to phrase it using a variation of the functor~$\Chi_{\mathcal{U}}$,
but it can also be phrased using any of the other three functors.
\begin{lemma}
  \label{lemma:hochschild-cohomology-is-Ext}
  The functor
  \begin{equation}
    \label{equation:diagonal-embedding}
    -\otimes_{\field Q}^{\mathbf{L}}\mathcal{U}\colon\derived^\bounded(\field Q\otimes\field Q^\op)\to\derived^\bounded(\mathcal{O}_XQ)
  \end{equation}
  is fully faithful,
  and sends the diagonal~$\field Q$-bimodule~$\field Q$ to~$\mathcal{U}$.
  In particular, there exists an isomorphism of vector spaces
  \begin{equation}
    \hochschild^\bullet(\field Q)\cong\Ext_{\mathcal{O}_XQ}^\bullet(\mathcal{U},\mathcal{U}).
  \end{equation}
\end{lemma}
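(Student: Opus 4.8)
The plan is to deduce the displayed isomorphism from fully faithfulness of the functor $T\colonequals-\otimes_{\field Q}^{\mathbf{L}}\mathcal{U}$, evaluated at the diagonal bimodule. By definition $\hochschild^\bullet(\field Q)=\Ext_{\field Q\otimes\field Q^\op}^\bullet(\field Q,\field Q)$, and contracting the right-hand diagonal copy of $\field Q$ against the left $\field Q$-structure of $\mathcal{U}$ returns $\mathcal{U}$ with its original $\mathcal{O}_XQ$-structure, so that $\field Q\otimes_{\field Q}^{\mathbf{L}}\mathcal{U}\cong\mathcal{U}$; this is the second assertion, immediate from the definitions. Hence once $T$ is fully faithful we obtain $\hochschild^\bullet(\field Q)\cong\Ext^\bullet_{\mathcal{O}_XQ}(\mathcal{U},\mathcal{U})$. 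To prove fully faithfulness I would first reduce to a generating set: as $Q$ is acyclic, $\field Q\otimes\field Q^\op$ has finite global dimension, so the indecomposable projective bimodules $\field Qe_i\otimes_\field e_j\field Q$ (for $i,j\in Q_0$) generate $\derived^\bounded(\field Q\otimes\field Q^\op)$ as a thick subcategory. Since $T$ is exact, the standard two-step dévissage (the objects on which the natural map is an isomorphism form a thick subcategory in each variable) reduces the claim to showing that
\begin{equation}
  T_{B,B'}\colon\RHom_{\field Q\otimes\field Q^\op}(B,B')\to\RHom_{\mathcal{O}_XQ}(TB,TB')
\end{equation}
is an isomorphism for $B=\field Qe_i\otimes_\field e_j\field Q$ and $B'=\field Qe_k\otimes_\field e_l\field Q$.

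Next I would compute both sides on these generators. Contracting the right-hand factor gives $e_j\field Q\otimes_{\field Q}\mathcal{U}=\mathcal{U}_j$, so
\begin{equation}
  T(\field Qe_i\otimes_\field e_j\field Q)\cong P_i\otimes_\field\mathcal{U}_j=(P_i\otimes_\field\mathcal{O}_X)\otimes_{\mathcal{O}_X}\mathcal{U}_j,
\end{equation}
a locally free $\mathcal{O}_XQ$-module, where $P_i=\field Qe_i$. On the bimodule side a homomorphism is determined by the image of $e_i\otimes e_j$, so
\begin{equation}
  \RHom_{\field Q\otimes\field Q^\op}(\field Qe_i\otimes_\field e_j\field Q,\field Qe_k\otimes_\field e_l\field Q)\cong(e_i\field Qe_k)\otimes_\field(e_l\field Qe_j),
\end{equation}
concentrated in degree $0$ as these bimodules are projective. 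For the target, since $P_i\otimes_\field\mathcal{O}_X$ is the projective $\mathcal{O}_XQ$-module representing evaluation at $i$ and $\mathcal{U}_j$ is a vector bundle, adjunction yields
\begin{equation}
  \RsheafHom_{\mathcal{O}_XQ}(P_i\otimes_\field\mathcal{U}_j,\,P_k\otimes_\field\mathcal{U}_l)\cong(e_i\field Qe_k)\otimes_\field(\mathcal{U}_j^\vee\otimes_{\mathcal{O}_X}\mathcal{U}_l),
\end{equation}
with no higher sheaf Ext. Passing to derived global sections $\RRR\Gamma(X,-)$ and invoking \cref{theorem:global-sections} and \cref{theorem:cohomology-vanishing}, which identify $\HH^0(X,\mathcal{U}_j^\vee\otimes\mathcal{U}_l)\cong e_l\field Qe_j$ and kill the higher cohomology, produces $(e_i\field Qe_k)\otimes_\field(e_l\field Qe_j)$ in degree $0$. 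So the two sides are abstractly isomorphic and, in particular, the higher Ext groups vanish on both.

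The step requiring genuine care — and the main obstacle, exactly the subtlety flagged for \cref{theorem:altmann-hille} — is to verify that the natural map $T_{B,B'}$ \emph{realizes} this identification rather than merely matching dimensions. Here I would trace $T$ through a generator: the bimodule map with image $p\otimes q\in(e_i\field Qe_k)\otimes_\field(e_l\field Qe_j)$ is sent to the tensor of the formal morphism $P_i\to P_k$ induced by $p$ with the morphism $\mathcal{U}_j\to\mathcal{U}_l$ given by $\mathcal{U}_q$. Consequently $T_{B,B'}$ factors as $\mathrm{id}_{e_i\field Qe_k}\otimes h_{j,l}^{\mathcal{U}}$, which is an isomorphism by \cref{theorem:global-sections}; this is the precise analogue of the computation $\psi_q\mapsto\mathcal{U}_q$ already carried out in the proof of \cref{theorem:admissible}. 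Combined with the Ext-vanishing above, this shows $T_{B,B'}$ is an isomorphism on all generators, and the dévissage upgrades this to fully faithfulness of $T$, whence the asserted isomorphism $\hochschild^\bullet(\field Q)\cong\Ext^\bullet_{\mathcal{O}_XQ}(\mathcal{U},\mathcal{U})$ follows by specializing to the diagonal bimodule.
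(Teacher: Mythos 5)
Your argument is correct, but it takes a genuinely different route from the paper's. You establish full faithfulness of $T=-\otimes^{\LLL}_{\field Q}\mathcal{U}$ by d\'evissage to the projective bimodule generators $\field Qe_i\otimes_\field e_j\field Q$, computing both sides explicitly and checking that $T_{B,B'}$ realizes the identification as $\mathrm{id}_{e_i\field Qe_k}\otimes h_{j,l}^{\mathcal{U}}$; this in effect redoes the computation from the proof of \cref{theorem:admissible} in the bimodule setting. The paper instead takes the full faithfulness of $\Chi_{\mathcal{U}}$ as a black box: the unit of the adjunction $(-\otimes^{\LLL}_{\field Q}\mathcal{U})\dashv\RHom_{\mathcal{O}_X}(\mathcal{U},-)$ on bimodules becomes the unit for $\Chi_{\mathcal{U}}$ after applying the forgetful functor $\derived^\bounded(\field Q\otimes\field Q^\op)\to\derived^\bounded(\field Q^\op)$, and since that functor reflects isomorphisms the bimodule unit is invertible. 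The paper's argument is shorter and, more importantly, uses only the \emph{statement} of \cref{theorem:admissible} rather than its ingredients --- which matters because \cref{proposition:spectral-sequence} is explicitly advertised as requiring only that $\mathcal{U}$ exists and gives a fully faithful functor, not strong ample stability. Your direct appeal to \cref{theorem:cohomology-vanishing} reintroduces that hypothesis; this is easily repaired by instead extracting the vanishing $\Ext^{\geq 1}_X(\mathcal{U}_j,\mathcal{U}_l)=0$ and the identification of $\HH^0(X,\mathcal{U}_j^\vee\otimes\mathcal{U}_l)$ from the assumed full faithfulness evaluated on projectives, after which your proof runs under the same hypotheses as the paper's. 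A small payoff of your route is that, by working with a generating set closed under the d\'evissage rather than a spanning class, you never need to exhibit or manipulate the adjoint functors.
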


\begin{proof}
  The right adjoint to the functor \eqref{equation:diagonal-embedding}
  is~$\RHom_{\mathcal{O}_X}(\mathcal{U},-)$,
  and to check fully faithfulness of \eqref{equation:diagonal-embedding}
  we will check that the unit of the adjunction is a natural equivalence.
  Let $M$ be a complex of finitely generated~$\field Q$-$\field Q$-bimodules. We want to show that the natural morphism in $\derived^\bounded(\field Q \otimes \field Q^\op)$
  \begin{equation}
    \label{equation:unit-adjunction}
    M \to \RHom_{\mathcal{O}_X}(\mathcal{U},M \otimes_{\field Q}^\mathbf{L} \mathcal{U})
  \end{equation}
  is an isomorphism.
  We have established in \cref{theorem:admissible} (or rather, in its proof) that~$\Chi_\mathcal{U}$
  is fully faithful.
  Its right adjoint is $\RHom_{\mathcal{O}_X}(\mathcal{U},-)\colon \derived^\bounded(X) \to \derived^\bounded(\field Q^\op)$.
  So we know that \eqref{equation:unit-adjunction} is an isomorphism after applying
  the forgetful functor to~$\derived^\bounded(\field Q^\op)$.
  As this functor reflects isomorphisms,
  it was an isomorphism already in~$\derived^\bounded(\field Q\otimes_\field\field Q^\op)$,
  and thus the unit of the adjunction is an isomorphism.

  The natural isomorphism~$\field Q\otimes_{\field Q}^{\mathbf{L}}\mathcal{U}\cong\mathcal{U}$
  identifies the image of the diagonal bimodule with the universal representation~$\mathcal{U}$.
  Denoting the functor in \eqref{equation:diagonal-embedding} by~$F$,
  the isomorphism of vector spaces is given by composing the isomorphism
  \begin{equation}
    F_{\field Q,\field Q}\colon
    \Ext_{\field Q\otimes\field Q^\op}^\bullet(\field Q,\field Q)
    \to
    \Ext_{\mathcal{O}_XQ}^\bullet(\mathcal{U},\mathcal{U})
  \end{equation}
  with the standard isomorphism~$\Ext_{\field Q\otimes\field Q^\op}^\bullet(\field Q,\field Q)\cong\hochschild^\bullet(\field Q)$.
\end{proof}
Now we turn out attention to the objects on the~$\mathrm{E}_2$-page.
We have the following description.
\begin{lemma}
  \label{lemma:relative-Ext}
  We have
  \begin{equation}
    \sheafExt_{\mathcal{O}_XQ}^i(\mathcal{U},\mathcal{U})
    \cong
    \begin{cases}
      \mathcal{O}_X & i=0 \\
      \tangent_X & i=1 \\
      0 & i\geq 2.
    \end{cases}
  \end{equation}
\end{lemma}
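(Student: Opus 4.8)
The plan is to compute $\RsheafHom_{\mathcal{O}_XQ}(\mathcal{U},\mathcal{U})$ by resolving the first argument with locally projective $\mathcal{O}_XQ$-modules, and then to recognize the resulting two-term complex as the middle of the exact sequence \eqref{equation:4-term-sequence}. First I would write down the sheafified canonical (Ringel) resolution of $\mathcal{U}$ as an $\mathcal{O}_XQ$-module, namely the short exact sequence
\begin{equation*}
  0 \to \bigoplus_{a \in Q_1} (P_{\target(a)}\otimes_\field\mathcal{O}_X)\otimes_{\mathcal{O}_X}\mathcal{U}_{\source(a)} \xrightarrow{d} \bigoplus_{i \in Q_0} (P_i\otimes_\field\mathcal{O}_X)\otimes_{\mathcal{O}_X}\mathcal{U}_i \to \mathcal{U} \to 0,
\end{equation*}
whose differential sends a local generator $e_{\target(a)}\otimes v$ to $a\otimes v - e_{\target(a)}\otimes\mathcal{U}_a(v)$. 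Each term is a direct sum of modules of the shape $(P_i\otimes_\field\mathcal{O}_X)\otimes_{\mathcal{O}_X}\mathcal{V}$ with $\mathcal{V}$ locally free, hence is locally projective over $\mathcal{O}_XQ$ and therefore acyclic for $\sheafHom_{\mathcal{O}_XQ}(-,\mathcal{U})$; this is the observation that reduces everything to the affine, purely module-theoretic situation, and it can be checked exactly as in the proof of \cref{theorem:admissible}.

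Next I would apply $\sheafHom_{\mathcal{O}_XQ}(-,\mathcal{U})$ termwise to this resolution. Using the identification $\sheafHom_{\mathcal{O}_XQ}((P_i\otimes_\field\mathcal{O}_X)\otimes_{\mathcal{O}_X}\mathcal{V},\mathcal{U}) \cong \mathcal{V}^\vee\otimes\mathcal{U}_i$ already employed in the proof of \cref{theorem:admissible} (the case $\mathcal{V}=\mathcal{O}_X$ giving $\mathcal{U}_i$), the complex computing $\sheafExt^\bullet_{\mathcal{O}_XQ}(\mathcal{U},\mathcal{U})$ becomes the two-term complex
\begin{equation*}
  \bigoplus_{i \in Q_0}\mathcal{U}_i^\vee\otimes\mathcal{U}_i \longrightarrow \bigoplus_{a \in Q_1}\mathcal{U}_{\source(a)}^\vee\otimes\mathcal{U}_{\target(a)}
\end{equation*}
placed in degrees $0$ and $1$. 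A short local computation, dualizing the differential $d$ above, shows that under the local description \eqref{equation:Psi-description} the induced differential sends $(f_i)_i$ to $\pm(f_{\target(a)}\circ\mathcal{U}_a - \mathcal{U}_a\circ f_{\source(a)})_a$, i.e.\ it coincides up to sign with the map $\Psi$ in \eqref{equation:4-term-sequence}.

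Finally, since this two-term complex is exactly the middle of the $4$-term exact sequence \eqref{equation:4-term-sequence}, its kernel is the image of $\Phi$, namely $\mathcal{O}_X$, and its cokernel is $\tangent_X$; one reads off $\sheafExt^0\cong\mathcal{O}_X$ and $\sheafExt^1\cong\tangent_X$, while $\sheafExt^{\geq 2}=0$ because the resolution has length one. The main obstacle is the bookkeeping that the differential produced by the resolution really agrees with $\Psi$ — pinning down signs and the source/target conventions for the decomposition of $\field Q$ — together with the compatibility check that $\sheafHom_{\mathcal{O}_XQ}(-,\mathcal{U})$ applied to the locally projective terms computes the derived functor; both are local statements that reduce to the affine case and carry no essential difficulty beyond care.
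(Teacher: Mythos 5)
Your proof is correct, but it takes a genuinely different route from the paper's. The paper argues fiberwise in all three degrees: $\sheafExt^0\cong\mathcal{O}_X$ because stable representations are simple, $\sheafExt^{\geq 2}=0$ because $\field Q$ has global dimension one, and $\sheafExt^1\cong\tangent_X$ is simply cited from \cite[Proposition~3.7(2)]{2307.01711v2}. You instead compute all three at once by sheafifying the canonical length-one projective resolution of $\mathcal{U}$ over $\mathcal{O}_XQ$, applying $\sheafHom_{\mathcal{O}_XQ}(-,\mathcal{U})$, and matching the resulting two-term complex with the middle of \eqref{equation:4-term-sequence} via \eqref{equation:Psi-description}. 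What your approach buys is uniformity and self-containedness: given the exactness of \eqref{equation:4-term-sequence} (which the paper already quotes), all three isomorphisms drop out of a single computation, and in particular the $i=1$ case no longer needs a separate citation --- though this is somewhat illusory, since \eqref{equation:4-term-sequence} comes from the same reference and is itself established by essentially this resolution argument. What the paper's route buys is brevity and the avoidance of the bookkeeping you correctly flag: one does not need to verify sheaf-level exactness of the resolution, acyclicity of its terms, or that the dualized differential agrees with $\Psi$ up to sign. Note also that the two arguments rest on the same underlying facts, just packaged differently: your identification of $\ker\Psi$ with $\operatorname{im}\Phi\cong\mathcal{O}_X$ uses exactness of \eqref{equation:4-term-sequence} at the first two spots, which encodes precisely the simplicity of stable representations that the paper invokes directly, and the length of your resolution is the global dimension bound the paper uses fiberwise for $i\geq 2$.
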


\begin{proof}
  The first isomorphism is given by the natural morphism~$\mathcal{O}_X\to\sheafHom_{\mathcal{O}_XQ}(\mathcal{U},\mathcal{U})$,
  which can be checked to be an isomorphism fiberwise,
  because stable representations are simple.
  The second isomorphism is \cite[Proposition~3.7(2)]{2307.01711v2}.
  The vanishing for~$i\geq 2$ can be checked fiberwise,
  using that~$\Ext^{\geq 2}$ vanishes for any representation of~$Q$ over~$\field$.
\end{proof}

The following proposition proves how admissibility gives
the promised identification between the Hochschild cohomology of~$\field Q$
and deformation theory of~$X$.
We do not need to require strong ample stability,
it suffices that~$\mathcal{U}$ exists and gives a fully faithful functor.
\begin{proposition}
  \label{proposition:spectral-sequence}
  Let~$Q$, $\mathbf{d}$ and~$\theta$ be as in \cref{assumption:standing-assumption}.
  Let~$X=\modulispace[\stable\theta]{Q,\mathbf{d}}$.
  Assume that~$\Phi_{\mathcal{U}}$ is fully faithful.
  Then
  \begin{equation}
    \HH^i(X,\tangent_X)
    \cong
    \begin{cases}
      \hochschild^1(\field Q) & i=0 \\
      0 & i\geq 1.
    \end{cases}
  \end{equation}
\end{proposition}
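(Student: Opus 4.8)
The plan is to feed \cref{lemma:relative-Ext} into the local-to-global spectral sequence \eqref{equation:spectral-sequence}, identify its abutment with $\hochschild^\bullet(\field Q)$ via \cref{lemma:hochschild-cohomology-is-Ext}, and let the whole computation collapse once two vanishing statements are in place. First I would deal with the hypothesis: the proposition only assumes that $\Phi_{\mathcal{U}}$ is fully faithful, whereas \cref{lemma:hochschild-cohomology-is-Ext} is phrased through $\Chi_{\mathcal{U}}$. By \cref{lemma:identification-1} we have $\Chi_{\mathcal{U}}(N)^\vee \cong \Phi_{\mathcal{U}}(\dual N)$, so $\Chi_{\mathcal{U}}$ is the composite of $\Phi_{\mathcal{U}}$ with the two contravariant equivalences $\dual(-)$ and $(-)^\vee$; hence $\Chi_{\mathcal{U}}$ is fully faithful as soon as $\Phi_{\mathcal{U}}$ is, and this transfer uses no strong ample stability. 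Re-running the proof of \cref{lemma:hochschild-cohomology-is-Ext} (which needs only full faithfulness of $\Chi_{\mathcal{U}}$) then yields $\hochschild^\bullet(\field Q) \cong \Ext_{\mathcal{O}_XQ}^\bullet(\mathcal{U},\mathcal{U})$.

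Next I would set up the spectral sequence. By \cref{lemma:relative-Ext} the $\mathrm{E}_2$-page of \eqref{equation:spectral-sequence} is concentrated in the two rows $q=0$ and $q=1$, with $\mathrm{E}_2^{p,0} = \HH^p(X,\mathcal{O}_X)$ and $\mathrm{E}_2^{p,1} = \HH^p(X,\tangent_X)$, converging to $\Ext_{\mathcal{O}_XQ}^{p+q}(\mathcal{U},\mathcal{U}) \cong \hochschild^{p+q}(\field Q)$. The two inputs I would invoke are: $\hochschild^n(\field Q) = 0$ for $n \geq 2$, since $\field Q$ is hereditary (this is what underlies \cref{theorem:happel}); and $\HH^n(X,\mathcal{O}_X) = 0$ for $n \geq 1$, since the cohomology of a quiver moduli space of this type is of Tate type, so that $h^{0,n}=0$ for $n>0$.

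With the bottom row vanishing in all positive degrees the sequence degenerates for purely formal reasons: having only two nonzero rows, all differentials $d_r$ with $r \geq 3$ vanish, and the only potentially nonzero $d_2$-differentials $d_2^{p,1}\colon \HH^p(X,\tangent_X) \to \HH^{p+2}(X,\mathcal{O}_X)$ have vanishing target because $\HH^{\geq 1}(X,\mathcal{O}_X)=0$. Degeneration at $\mathrm{E}_2$ then presents each $\hochschild^n(\field Q)$ as a (split, over a field) extension of $\HH^{n-1}(X,\tangent_X)=\mathrm{E}_2^{n-1,1}$ by $\HH^n(X,\mathcal{O}_X)=\mathrm{E}_2^{n,0}$. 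In degree $n=1$ the bottom piece $\HH^1(X,\mathcal{O}_X)$ is zero, whence $\HH^0(X,\tangent_X)\cong\hochschild^1(\field Q)$, the case $i=0$. For $n=i+1\geq 2$ both the abutment $\hochschild^{i+1}(\field Q)$ and the bottom piece $\HH^{i+1}(X,\mathcal{O}_X)$ vanish, forcing $\HH^i(X,\tangent_X)=0$, the case $i\geq 1$.

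The main point to secure is the structure-sheaf vanishing $\HH^{\geq 1}(X,\mathcal{O}_X)=0$: this is the one genuinely geometric ingredient, and it is precisely what annihilates every differential. Without it the identification of the abutment would, from $\hochschild^{\geq 2}(\field Q)=0$ alone, only produce the relations $\HH^p(X,\tangent_X)\cong\HH^{p+2}(X,\mathcal{O}_X)$ for $p\geq 1$ rather than outright vanishing, so one cannot bootstrap the tangent cohomology down to zero from the algebraic side. Everything else in the argument is formal, and I would take care only that the hypothesis transfer in the first step and the two-row degeneration are stated so as to avoid any appeal to \cref{theorem:cohomology-vanishing}, in keeping with the claim that strong ample stability is not needed here.
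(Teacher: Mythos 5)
Your argument is correct and follows essentially the same route as the paper: feed \cref{lemma:relative-Ext} into the spectral sequence \eqref{equation:spectral-sequence}, identify the abutment with $\hochschild^\bullet(\field Q)$ via \cref{lemma:hochschild-cohomology-is-Ext}, and conclude from the two-column degeneration at $\mathrm{E}_2$ together with $\HH^{\geq 1}(X,\mathcal{O}_X)=0$ (which the paper deduces from rationality of $X$ rather than Tate-type cohomology, an immaterial difference) and $\hochschild^{\geq 2}(\field Q)=0$. Your explicit transfer of full faithfulness from $\Phi_{\mathcal{U}}$ to $\Chi_{\mathcal{U}}$ via \cref{lemma:identification-1} is a welcome clarification of a step the paper leaves implicit, but it does not change the substance of the proof.
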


\begin{proof}
  Because~$X$ is a smooth projective rational variety,
  as shown in \cite[Theorem 6.4]{MR1914089},
  we have that~$\HH^{\geq1}(X,\mathcal{O}_X)=0$ and~$\HH^0(X,\mathcal{O}_X)\cong\field$.
  This fact,
  together with \cref{lemma:relative-Ext}
  shows that the~$\mathrm{E}_2$-page of the spectral sequence \eqref{equation:spectral-sequence}
  looks like
  \begin{equation}
    \label{equation:E2-page}
    \begin{tikzcd}
      \vdots & \vdots              & \vdots & \iddots \\
      0      & \HH^2(X,\tangent_X) & 0      & \ldots \\
      0      & \HH^1(X,\tangent_X) & 0      & \ldots \\
      \field & \HH^0(X,\tangent_X) & 0      & \ldots
    \end{tikzcd}
  \end{equation}
  From \cref{lemma:hochschild-cohomology-is-Ext} we know the abutment of the spectral sequence,
  and we see that~$\HH^{\geq 1}(X,\tangent_X)$ needs to be cancelled in the spectral sequence, as~$\hochschild^{\geq 2}(\field Q)=0$.
  But this is impossible, because the spectral sequence necessarily already degenerates on the~$\mathrm{E}_2$-page,
  thus they are zero to begin with.
  Similarly we obtain an isomorphism
  \begin{equation}
    \HH^0(X,\tangent_X)\cong\hochschild^1(\field Q).
  \end{equation}
\end{proof}

\printbibliography

\emph{Pieter Belmans}, \url{pieter.belmans@uni.lu} \\
Department of Mathematics, Universit\'e de Luxembourg, 6, avenue de la Fonte, L-4364 Esch-sur-Alzette, Luxembourg

\emph{Ana-Maria Brecan}, \url{anabrecan@gmail.com}

\emph{Hans Franzen}, \url{hans.franzen@math.upb.de} \\
Institute of Mathematics, Paderborn University, Warburger Stra\ss e 100, 33098 Paderborn, Germany

\emph{Markus Reineke}, \url{markus.reineke@rub.de} \\
Fakultat f\"ur Mathematik, Ruhr-Universit\"at Bochum, Universit\"atsstra\ss e 150, 44780 Bochum, Germany

\end{document}